\documentclass[a4paper,11pt,reqno]{amsart}
\usepackage{amsmath,amssymb,amsthm}
\usepackage[expansion=false]{microtype}
\usepackage{listings}
\usepackage[margin=2.5cm]{geometry}
\usepackage[colorlinks=true,linkcolor=black,citecolor=black,urlcolor=black]{hyperref}
\usepackage[nameinlink,capitalize]{cleveref}
\usepackage{xurl}
\hypersetup{pdftitle={Maximum 2-scattered subspaces of V(r,q\textasciicircum 6) for every r >= 3},
  pdfauthor={Daniele Bartoli, Francesco Ghiandoni, Alessandro Giannoni, Giuseppe Marino, Alessandro Neri},
  pdfkeywords={finite fields, scattered subspaces, linear sets, linearised polynomials, MRD codes}}
\allowdisplaybreaks
\newcommand{\F}{\mathbb F}
\newcommand{\Tr}{\operatorname{Tr}}
\newcommand{\N}{\operatorname{N}}
\newcommand{\GL}{\operatorname{GL}}
\newcommand{\Fix}{\operatorname{Fix}}
\newtheorem{theorem}{Theorem}[section]
\newtheorem{lemma}[theorem]{Lemma}
\newtheorem{proposition}[theorem]{Proposition}
\newtheorem{corollary}[theorem]{Corollary}
\theoremstyle{remark}
\newtheorem{remark}[theorem]{Remark}
\numberwithin{equation}{section}
\title[Maximum $2$-scattered subspaces of $V(r,q^6)$]{Maximum $2$-scattered subspaces of $V(r,q^6)$ for every $r\ge3$}
\author[D. Bartoli]{Daniele Bartoli}
\address{Daniele Bartoli: Dipartimento di Matematica e Informatica, Universit\`a degli Studi di Perugia, Perugia, Italy}
\email{daniele.bartoli@unipg.it}
\author[F. Ghiandoni]{Francesco Ghiandoni}
\address{Francesco Ghiandoni: Department of Mathematics, FAMNIT, University of Primorska, Koper, Slovenia}
\email{francesco.ghiandoni@famnit.upr.si}
\author[A. Giannoni]{Alessandro Giannoni}
\address{Alessandro Giannoni: Dipartimento di Matematica e Applicazioni ``Renato Caccioppoli'', Universit\`a degli Studi di Napoli Federico II, Napoli, Italy}
\email{alessandro.giannoni@unina.it}
\author[G. Marino]{Giuseppe Marino}
\address{Giuseppe Marino: Dipartimento di Matematica e Applicazioni ``Renato Caccioppoli'', Universit\`a degli Studi di Napoli Federico II, Napoli, Italy}
\email{giuseppe.marino@unina.it}
\author[A. Neri]{Alessandro Neri}
\address{Alessandro Neri: Dipartimento di Matematica e Applicazioni ``Renato Caccioppoli'', Universit\`a degli Studi di Napoli Federico II, Napoli, Italy}
\email{alessandro.neri@unina.it}
\date{}
\keywords{Finite fields, scattered subspaces, linear sets, linearised polynomials, MRD codes}
\subjclass[2020]{51E20, 05B25, 11T06, 94B05}
\begin{document}
\begin{abstract}
We prove that for every prime power $q$ and every integer $r\ge3$
there exists a maximum $2$-scattered $\F_q$-subspace of $V(r,q^6)$,
that is, a $2r$-dimensional $\F_q$-subspace meeting every
$2$-dimensional $\F_{q^6}$-subspace in dimension at most $2$.
Since an example in dimension three is known and direct sums preserve
the property, it suffices to treat $r=4$ and $r=5$. For $r=4$ we study
the subspaces
$U_b=\{(x,x^q+bx^{q^5}):\Tr_{q^{12}/q^4}(x)=0\}$ of
$\F_{q^{12}}^2$, $b\in\F_{q^{12}}$. We describe the parameters $b$ for
which $U_b$ is not maximum $2$-scattered in terms of trace conditions
and Dickson determinants, and deduce that at least
$\frac12q^{12}-q^{11}+O(q^{10})$ parameters work; this number is
positive for $q\ge3$, and a computer search settles $q=2$.
For every $r\ge5$ coprime to $6$ we show that
$U_s=\{x+\varepsilon s\,x^{q^r}:x\in\F_{q^{2r}}\}\subseteq\F_{q^{6r}}$
is maximum $2$-scattered for all
$\varepsilon\in\F_{q^6}\setminus\F_{q^2}$ and
$s\in\F_{q^r}\setminus\F_q$. Consequently, $\F_{q^6}$-linear
$[2r,r,4]_{q^6/q}$ MRD codes exist for every $r\ge3$; for $r$ coprime
to $6$, the codes obtained from $U_s$ are equivalent to their duals.
\end{abstract}
\maketitle

\section{Introduction}\label{sec:introduction}

Let $V(r,q^n)$ denote an $r$-dimensional vector space over $\F_{q^n}$.
For $1\le h\le r-1$, an $\F_q$-subspace $U$ of $V(r,q^n)$ is called $h$-scattered if it
spans $V(r,q^n)$ over $\F_{q^n}$ and meets every $h$-dimensional
$\F_{q^n}$-subspace in an $\F_q$-subspace of dimension at most $h$.
For $h=1$, this is the scattered property with respect to the
Desarguesian spread obtained by field reduction. Blokhuis and Lavrauw
\cite{BL2000} proved that a scattered subspace of $V(r,q^n)$ has
dimension at most $rn/2$. Ball, Blokhuis and Lavrauw
\cite{BBL2000} constructed maximum scattered subspaces for
$(r,n)=(3,4)$. Constructions of maximum scattered linear sets and their applications
to complete caps were developed by Bartoli, Giulietti, Marino and
Polverino \cite{BGMP2018}. Csajb\'ok, Marino, Polverino and Zullo
\cite{CMPZ2017} proved existence in the remaining cases when $rn$ is
even and extended the connection with maximum rank distance (MRD)
codes established by Sheekey \cite{Sheekey2016}.

The condition for $h>1$ was introduced by Csajb\'ok, Marino, Polverino
and Zullo in \cite{CMPZ2021}. They proved that, for $n\ge h+1$, an
$h$-scattered subspace has dimension at most $rn/(h+1)$
\cite[Theorem~2.3]{CMPZ2021}. We call a subspace attaining this bound
maximum. Their constructions attain the bound whenever $h+1$ divides
$r$ and $n\ge h+1$, by taking direct sums of subspaces scattered with respect to
hyperplanes. They also introduced Delsarte duality for these subspaces
and used it to obtain existence results in some cases where $h+1$ does
not divide $r$. A related geometric construction was studied by Napolitano,
Polverino, Zini and Zullo \cite{NPZZ2021}, who introduced linear sets
of $h$-pseudoregulus type through projections of Desarguesian spreads.
They characterised the maximum $h$-scattered members of this class
using Moore exponent sets and determined their equivalence.

Many scattered subspaces are described as graphs of linearised
polynomials. For a graph
$U_f=\{(x,f(x)):x\in\F_{q^n}\}$, where $f$ is linearised,
the scattered condition says that $f(x)y=f(y)x$ forces
$x,y$ to be dependent over $\F_q$. Bartoli and Zhou \cite{BZ2018}
studied exceptional scattered polynomials, for which scatteredness
persists over infinitely many extensions of the field of coefficients.
Bartoli, Marino, Neri and Vicino \cite{BMNV2024} introduced
$h$-scattered sequences of multivariate linearised polynomials,
together with exceptionality and indecomposability, and constructed
an infinite family of exceptional scattered sequences of order two.
Bartoli, Giannoni and Marino \cite{BGM2025} subsequently constructed
infinite families of exceptional scattered sequences of every order
$m\ge3$, which under additional arithmetic conditions give scattered
subspaces that are not direct sums of smaller ones. In \cite{BGM2026}, the same authors constructed a further
infinite family of maximum $h$-scattered subspaces of
$V(m(h+1),q^n)$ and studied the associated MRD codes, including
generalised rank weights which distinguish them from earlier families.
These constructions concern ambient dimensions divisible by $h+1$;
the cases $(r,n,h)=(4,6,2)$ and $(5,6,2)$ needed here fall
outside that range.

The $h$-scattered property is the case $k=h$, together with the
spanning condition, of the notion of $(h,k)_q$-evasive subspace, that
is, a subspace meeting every $h$-dimensional $\F_{q^n}$-subspace in
dimension at most $k$ over $\F_q$; bounds and constructions for
evasive subspaces are given in \cite{BCMT2021}. Sheekey and Van de
Voorde \cite{SV2020} related linear sets, including those scattered
with respect to hyperplanes, to rank-metric codes and their duality.
Zini and Zullo \cite{ZZ2021} showed that maximum $h$-scattered
subspaces correspond to MRD codes and characterised them through
their intersections with hyperplanes, and Marino, Neri and Trombetti
\cite{MNT2023} interpreted this correspondence in terms of
generalised rank weights and used it to study near MRD and
quasi-MRD codes. Maximum $h$-scattered subspaces have been used to
construct sets with few intersection numbers with hyperplanes
\cite{AMSZ2023}, and they are closely related to subspace designs
\cite{SZ2024} and, through $h$-designs, to maximum sum-rank distance
codes \cite{SS2025}.

We write $\Tr_{q^a/q^c}$ and $\N_{q^a/q^c}$ for the trace and
norm from $\F_{q^a}$ to $\F_{q^c}$ whenever $c$ divides $a$.
All fields are regarded as subfields of a fixed algebraic closure
of $\F_q$.

The case $(r,n,h)=(4,6,2)$ asks for an eight-dimensional
$\F_q$-subspace of $V(4,q^6)$ whose intersection with every
two-dimensional $\F_{q^6}$-subspace has dimension at most two over
$\F_q$. Bartoli, Ghiandoni, Giannoni and Marino constructed such
subspaces when $q$ is an odd power of $2$ in \cite{BGGM2025}.
One description of their construction is
\[
 \{(x,y,x^{q^2}+y^q,x^q+y^{q^3}):x,y\in\F_{q^6},
       \ \Tr_{q^6/q^2}(x)=\Tr_{q^6/q^2}(y)=0\}.
\]
Together with the dimension-three construction and direct sums,
their result gives maximum $2$-scattered subspaces of $V(r,q^6)$
for $r\ge3$, $r\ne5$, under the same restriction on $q$.

In this paper we construct maximum $2$-scattered subspaces of
$V(4,q^6)$ and $V(5,q^6)$ for every prime power $q$. In dimension
four we use graphs of the binomials $x^q+bx^{q^5}$ restricted to the
kernel of $\Tr_{q^{12}/q^4}$; in dimension five, and more generally in
every dimension $r\ge5$ coprime to $6$, we use a two-dimensional
$\F_{q^r}$-subspace of $\F_{q^{6r}}$. In dimension three one can take
$\{(x,x^q,x^{q^2}):x\in\F_{q^6}\}$ \cite[Example~2.5]{CMPZ2021}.
Together with direct sums, this gives the following result.

\begin{theorem}\label{thm:main}
For every prime power $q$ and every integer $r\ge3$, there exists a
maximum $2$-scattered $\F_q$-subspace of $V(r,q^6)$, of dimension
$2r$ over $\F_q$.
\end{theorem}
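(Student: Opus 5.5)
The plan is to reduce \cref{thm:main} to the three base dimensions $r=3,4,5$ and then glue with direct sums. First I will record the direct sum lemma: if $U_1\subseteq V(r_1,q^6)$ and $U_2\subseteq V(r_2,q^6)$ are maximum $2$-scattered, then $U_1\oplus U_2\subseteq V(r_1+r_2,q^6)$ is $2$-scattered of dimension $2(r_1+r_2)$, hence maximum by \cite[Theorem~2.3]{CMPZ2021}. This is the direct sum construction of \cite{CMPZ2021}. To check it, take a $2$-dimensional $\F_{q^6}$-subspace $W$ and project onto the two summands. If $W$ lies in one summand, the bound is immediate. If $W$ meets $V_1$ in a $1$-dimensional space, then $U\cap W$ has a kernel $U_1\cap W\cap V_1$ of $\F_q$-dimension at most $1$, because a $2$-scattered subspace is also $1$-scattered. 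Its image sits in $U_2\cap \pi_2(W)$ with $\pi_2(W)$ of dimension $1$, so it also has dimension at most $1$. Otherwise $W$ is a graph of an $\F_{q^6}$-isomorphism between $\F_{q^6}$-subspaces of dimension at most $2$, and the dimension count has to be handled carefully. I would simply cite \cite{CMPZ2021} for this.

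Since every $r\ge3$ can be written as a nonnegative combination $r=3a+4b+5c$ (for instance $3,4,5$, then add copies of $3$), it suffices to exhibit maximum $2$-scattered subspaces for $r=3,4,5$.

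For $r=3$, take $\{(x,x^q,x^{q^2})\}$ from \cite[Example~2.5]{CMPZ2021}.

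For $r=5$, use the theorem proved later for $r\ge5$ coprime to $6$: the space $U_s=\{x+\varepsilon s x^{q^r}\}$ is a $2r$-dimensional $\F_q$-subspace of $\F_{q^{6r}}\cong V(r,q^6)$. Its $2$-scatteredness reduces to showing that for $\F_q$-independent $x,y,z\in\F_{q^{2r}}$ the corresponding three vectors are $\F_{q^6}$-independent. Such independence is a vanishing-determinant or norm-type condition, which is exploited using $\gcd(r,6)=1$ and $\varepsilon\notin\F_{q^2}$, $s\notin\F_q$.

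For $r=4$, use $U_b$ inside $\F_{q^{12}}^2\cong V(4,q^6)$. Here one needs that at least one $b$ works. The later count $\tfrac12q^{12}-q^{11}+O(q^{10})$ of good parameters is positive for $q\ge3$, and the case $q=2$ is handled by computer search.

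The main obstacle is the $r=4$ case. There one must characterise the bad $b$ through trace conditions and Dickson determinants, and then bound their number with explicit error terms so that positivity holds already at $q=3$. Given those later results, the proof of \cref{thm:main} itself is just the assembly described above.
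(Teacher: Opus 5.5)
Your proposal is correct and follows the paper's proof: write $r=3k+r_0$ with $r_0\in\{3,4,5\}$, take \cite[Example~2.5]{CMPZ2021}, \cref{thm:four} or \cref{thm:coprime} according to $r_0$, and add $k$ copies of the three-dimensional example using the direct-sum result \cite[Theorem~2.4]{CMPZ2021}. The case you flag as delicate in your direct-sum sketch is in fact immediate: when $W\cap V_1=0$, the projection $\pi_2$ is injective on $U\cap W$ with image in $U_2\cap\pi_2(W)$, so the same kernel--image count gives dimension at most $2$ in every case.
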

\begin{proof}
Write $r=3k+r_0$ with $k\ge0$ and $r_0\in\{3,4,5\}$. For $r_0=3,4,5$
take the subspace in \cite[Example~2.5]{CMPZ2021}, \cref{thm:four}
and \cref{thm:coprime}, respectively. Its direct sum with $k$ copies
of the three-dimensional example is maximum $2$-scattered in
$V(r,q^6)$ by \cite[Theorem~2.4]{CMPZ2021}.
\end{proof}

The cases $r=4$ and $r=5$ are covered by the following two results.

For the dimension-four construction, put
\begin{equation}\label{eq:family}
 W_0=\ker\Tr_{q^{12}/q^4},\qquad
 g_b(x)=x^q+bx^{q^5},\qquad
 U_b=\{(x,g_b(x)):x\in W_0\},
 \quad b\in\F_{q^{12}}.
\end{equation}
The ambient space $\F_{q^{12}}^2$ has dimension four over
$\F_{q^6}$. Since $W_0$ has dimension two over $\F_{q^4}$ and
the first coordinate of the graph map is $x$, every $U_b$ has
dimension eight over $\F_q$.

\begin{theorem}\label{thm:four}
For every prime power $q$ there is $b\in\F_{q^{12}}$ such that $U_b$
is a maximum $2$-scattered subspace of $V(4,q^6)$.
For every $q\ge3$, the number of such parameters is at least
\begin{equation}\label{eq:bound}
q^{12}
-\frac{q^4+2q^3+q+2}{2}\,(q^8+q^4+1)
-(q^2+1)(q^4-1)q^4.
\end{equation}
This bound is positive for every $q\ge3$. For $q=2$, the exhaustive
computation in \cref{app:q2} gives exactly $1056$ such parameters.
\end{theorem}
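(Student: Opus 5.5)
We reduce the failure of $U_b$ to be maximum $2$-scattered to an explicit algebraic condition on $b$, and then count. Since $U_b$ has $\F_q$-dimension $8=rn/(h+1)$ with $r=4,n=6,h=2$, maximality is automatic, and we only need that $U_b$ spans $\F_{q^{12}}^2$ over $\F_{q^6}$ and meets every $2$-dimensional $\F_{q^6}$-subspace in dimension at most $2$. Failure means there are $u_1,u_2,u_3\in U_b$, $\F_q$-independent, lying in a $2$-dimensional $\F_{q^6}$-space. Writing $u_i=(x_i,g_b(x_i))$ and viewing $\F_{q^{12}}^2$ as $\F_{q^6}^4$ via a basis of $\F_{q^{12}}/\F_{q^6}$, this says that all $3\times 3$ minors of the $4\times 3$ matrix over $\F_{q^6}$ vanish. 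It is more convenient to use the equivalent condition that there exist $\lambda_1,\lambda_2,\lambda_3\in\F_{q^6}$, not all zero, with $\sum\lambda_i x_i=0$ and $\sum\lambda_i g_b(x_i)=0$. This requires $x_1,x_2,x_3$ to be $\F_{q^6}$-dependent.

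\textbf{Parametrising bad triples.} First fix the $\F_{q^6}$-span of $x_1,x_2,x_3$ inside $\F_{q^{12}}$, which is a line $\ell=\F_{q^6}\theta$ or all of $\F_{q^{12}}$. If it is a line $\ell$, then $\ell\cap W_0$ is an $\F_q$-space of dimension at most $2$, because $W_0$ is $\F_{q^4}$-linear of dimension $2$ and $\F_{q^6}\cap\F_{q^4}=\F_{q^2}$. So three independent $x_i$ cannot lie in one line, and the $x_i$ span a $2$-dimensional $\F_{q^6}$-configuration with one relation. After normalising, $x_3=\alpha x_1+\beta x_2$ with $\alpha,\beta\in\F_{q^6}$, and the conditions become
\[
 g_b(\alpha x_1+\beta x_2)=\alpha g_b(x_1)+\beta g_b(x_2).
\]
Expanding $g_b$ gives
\[
 (\alpha^q-\alpha)x_1^q+(\beta^q-\beta)x_2^q+b\bigl((\alpha^{q^5}-\alpha)x_1^{q^5}+(\beta^{q^5}-\beta)x_2^{q^5}\bigr)=0,
\]
together with the constraint that $x_1,x_2,x_3\in W_0$. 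For a given configuration $(x_1,x_2,\alpha,\beta)$ this equation determines $b$ uniquely whenever the coefficient of $b$ is nonzero. When that coefficient vanishes, the equation imposes a condition independent of $b$, and this case must be excluded separately; it is the source of the trace conditions.

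\textbf{Counting and characterising bad $b$.} We describe the bad set as the image of this parametrisation, modulo the natural group action: $\F_q$-changes of basis of $\langle x_1,x_2,x_3\rangle$ and $\F_{q^{4}}^*$-scalings that move $W_0$ to itself. The goal is to express membership of $b$ through a condition of the form "some $\F_q$-linearised polynomial attached to $b$ has a root in $W_0$". Concretely, after eliminating $x_2$ one obtains a linearised polynomial in $x_1$, and the existence of a nonzero root in the $q^{12}$-field is governed by the vanishing of a Dickson (Moore-type) determinant in $b$ and its conjugates. This yields the exceptional set described by trace conditions together with a Dickson determinant condition. We then bound the number of $b$ for which this determinant vanishes by its degree in $b$, or directly by counting the parameters $(x_1,x_2,\alpha,\beta)$ modulo the group. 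Heuristically, there are about $q^{8}\cdot q^{4}$ orbits of data, each producing at most $(q^4+2q^3+q+2)/2$ values of $b$ per $\F_{q^6}$-structure. This matches the middle term of \eqref{eq:bound}, and the last term $(q^2+1)(q^4-1)q^4$ accounts for the degenerate case where the $b$-coefficient vanishes, or where spanning fails. Positivity for $q\ge3$ is then a direct estimate: the subtracted terms are $\tfrac12 q^{12}+q^{11}+O(q^{10})$. For $q=2$ we cite the computation in \cref{app:q2}.

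\textbf{Main obstacle.} The delicate step is obtaining the sharp count of bad $b$, namely the factor $\tfrac12$. A naive count of parameter tuples gives roughly $q^{12}$ bad values, which is useless, so we must identify precisely the stabiliser and the multiplicity with which each bad $b$ arises. Our first attempt is to show that each bad $b$ arises from at least about $2(q^3-q)$-fold redundant data, coming from $\GL(2,q)$-type reparametrisations of the pair $(x_1,x_2)$ combined with the $\F_{q^2}$-structure of $\ell\cap W_0$. Failing that, we fall back on bounding the number of roots of the Dickson determinant, viewed as a polynomial in $b$ of controlled degree, in $\F_{q^{12}}$.
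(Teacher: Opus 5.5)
\textbf{There is a genuine gap: the proposal never bounds the number of bad $b$.} Your reformulation of badness is correct: three $\F_q$-independent vectors of $U_b$ that are $\F_{q^6}$-dependent, normalised as $x_3=\alpha x_1+\beta x_2$ with $g_b(x_3)=\alpha g_b(x_1)+\beta g_b(x_2)$. Citing \cref{app:q2} for $q=2$ and the final positivity estimate are also fine. But the heuristic you offer in place of a count cannot give \eqref{eq:bound}. Read literally, ``$q^8\cdot q^4$ orbits, each producing at most $(q^4+2q^3+q+2)/2$ values'' is of order $q^{16}$, not $\frac12q^{12}$. You also misidentify both subtracted terms.

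In the paper, the middle term is (number of orbits) times (size per orbit):
\begin{itemize}
\item A functional $c_{\alpha,\beta}$ of rank at most three whose image has full $\F_{q^2}$-span has image equal to the fixed space $V$ of a $q$-semilinear involution $\rho_u$.
\item There are $q^4+q$ scalar classes of such $V$ (\cref{lem:orbit-moment}).
\item Each $V$ excludes exactly the $b$ with $\Delta_b(u_V)=0$, and there are at most $q^8+q^4+1$ of them. This is \cref{lem:BV-size}, via the incidence equation $u_0z-u_1z^{q^8}=u_0y-u_2y^{q^4}$ modulo $\F_{q^4}^*$-scaling.
\item The summand $2$ in $q^4+2q^3+q+2$ accounts for the $q^8+q^4+1$ parameters with $\N_{q^{12}/q^4}(b)=-1$. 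These are discarded because the characterisation needs $z\mapsto z+(zb)^{q^8}$ and $g_b$ to be bijective.
\end{itemize}
The last term is not a degenerate-coefficient or spanning case. It counts the triples $(\varepsilon,\nu,\tau)$ of condition~(1) in \cref{thm:bad}, where the image lies in a multiple of $\ker\Tr_{q^6/q^2}$; each triple determines $\eta$ and hence $b$ by \eqref{eq:recoverb}.

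\textbf{The factor $\frac12$.} You rightly flag this as the crux, but it does not come from $\GL_2(q)$-type redundancy in $(x_1,x_2)$. It comes from the identity $\Delta_b(u_{V^\perp})=-\Delta_b(u_V)/d^2$ of \cref{lem:pairing}, so $V$ and its complement under $\Tr_{q^6/q}(xy)$ exclude the same parameters. This is combined with the bound of $2q^3$ self-paired classes in \cref{lem:selfpaired}. Nothing in your plan anticipates this pairing.

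\textbf{The fallback also fails.} The bad set is a union over about $q^4$ classes of zero sets of $\Delta_b(u)$, not the zero set of a single polynomial. Moreover, $\Delta_b(u)$ already contains $\N_{q^{12}/q^4}(1-b^{q^3})$, which has degree $q^{11}+q^7+q^3$ in $b$. Counting roots therefore gives nothing useful.

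\textbf{A false claim about lines.} You claim an $\F_{q^6}$-line $\ell$ meets $W_0$ in $\F_q$-dimension at most $2$; this is false. The map $e\mapsto\Tr_{q^{12}/q^4}(\theta e)$ is $\F_{q^2}$-linear from $\F_{q^6}$ to $\F_{q^4}$, so every line meets $W_0$ in at least $q^2$ elements. Since no line is contained in $W_0$, the identity $\sum_\ell(|\ell\cap W_0|-1)=q^8-1$ then forces exactly $q^2+1$ lines meeting $W_0$ in $\F_q$-dimension $4$. So three independent $x_i$ can lie on one line, and your ``one relation'' normalisation must allow a two-dimensional space of relations.

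\textbf{What is missing.} The missing ingredients are:
\begin{itemize}
\item the passage to the functionals $c_{\alpha,\beta}$;
\item the split according to the $\F_{q^2}$-span of their image;
\item the involution and Dickson-determinant description of the full-span case;
\item the orthogonality pairing that produces the factor $\frac12$.
\end{itemize}
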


The lower bound is $\frac12q^{12}-q^{11}+O(q^{10})$ as
$q\to\infty$, so the proportion of parameters $b$ for which
$U_b$ is maximum $2$-scattered has $\liminf$ at least $1/2$.
Note that \eqref{eq:bound} counts parameters $b$, not equivalence
classes of the subspaces $U_b$.
% q=3 data below come from an independent computation; re-check before submission.
For $q=3$ the bound \eqref{eq:bound} equals $1631$, whereas an
exhaustive computation shows that $201936$ of the $3^{12}=531441$
parameters are good, including $4200$ of the $6643$ parameters with
$\N_{3^{12}/3^4}(b)=-1$. For instance, if
$\F_{3^{12}}=\F_3[\omega]$ with
$\omega^{12}+2\omega^{11}+\omega^{10}+\omega^9+\omega^8+2=0$,
then $b=\omega^2+1$ is good.

For dimensions coprime to six, consider instead
\begin{equation}\label{eq:coprime-family}
 U_s=\{x+\varepsilon s x^{q^r}:x\in\F_{q^{2r}}\}
       \subseteq\F_{q^{6r}},
\end{equation}
where $\varepsilon\in\F_{q^6}\setminus\F_{q^2}$ and
$s\in\F_{q^r}\setminus\F_q$. The ambient field has dimension
$r$ over $\F_{q^6}$.

\begin{theorem}\label{thm:coprime}
Let $q$ be a prime power and let $r\ge5$ satisfy $\gcd(r,6)=1$.
For every $\varepsilon\in\F_{q^6}\setminus\F_{q^2}$ and every
$s\in\F_{q^r}\setminus\F_q$, the subspace $U_s$ in
\eqref{eq:coprime-family} is a maximum $2$-scattered
$\F_q$-subspace of $V(r,q^6)$, of dimension $2r$ over $\F_q$.
\end{theorem}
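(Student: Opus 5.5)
The plan is to reformulate $2$-scatteredness as a determinant condition and then solve the resulting equations over $\F_{q^{2r}}$.

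\emph{Preliminaries.} Since $\gcd(r,6)=1$, we have $\F_{q^6}\cap\F_{q^{2r}}=\F_{q^2}$. Hence $\varepsilon\notin\F_{q^{2r}}$ and $\varepsilon$ has degree $3$ over $\F_{q^{2r}}$, so $1,\varepsilon,\varepsilon^2$ is an $\F_{q^{2r}}$-basis of $\F_{q^{6r}}$. It follows that $x\mapsto x+\varepsilon s x^{q^r}$ is injective on $\F_{q^{2r}}$, so $\dim_{\F_q}U_s=2r$.

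Moreover $U_s$ is $\F_{q^r}$-linear, because $\lambda^{q^r}=\lambda$ for $\lambda\in\F_{q^r}$. By \cite[Theorem~2.3]{CMPZ2021}, $2r$ is the maximum possible dimension. Spanning over $\F_{q^6}$ follows from the $2$-scattered property itself: if $U_s$ lay in a proper $\F_{q^6}$-subspace, some $2$-dimensional $\F_{q^6}$-subspace would meet it in dimension $>2$. So it suffices to show the following: whenever $u_1,u_2,u_3\in U_s$ are $\F_{q^6}$-linearly dependent, they are $\F_q$-linearly dependent.

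\emph{Determinant criterion.} Let $\sigma$ be $z\mapsto z^{q^6}$, which generates $\mathrm{Gal}(\F_{q^{6r}}/\F_{q^6})$. Then $u_1,u_2,u_3$ are $\F_{q^6}$-dependent if and only if the Moore determinant $\det(u_i^{\sigma^j})_{i,j=0,1,2}$ vanishes.

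On $\F_{q^{2r}}$, $\sigma$ restricts to a generator of $\mathrm{Gal}(\F_{q^{2r}}/\F_{q^2})$, and on $\F_{q^r}$ to a generator of $\mathrm{Gal}(\F_{q^r}/\F_q)$. It fixes $\varepsilon$ and commutes with $\tau\colon x\mapsto x^{q^r}$. Writing $u_i=x_i+\varepsilon s x_i^\tau$, column $j$ equals $x^{\sigma^j}+\varepsilon s^{\sigma^j}(x^{\tau})^{\sigma^j}$. Multilinear expansion then gives
\[
\det=\sum_{S\subseteq\{0,1,2\}}\varepsilon^{|S|}\Bigl(\prod_{j\in S}s^{\sigma^j}\Bigr)D_S,
\]
where each $D_S\in\F_{q^{2r}}$ is a Moore-type determinant mixing the columns $x^{\sigma^j}$ and $(x^\tau)^{\sigma^j}$.

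Reduce $\varepsilon^3=a+b\varepsilon+c\varepsilon^2$ with $a,b,c\in\F_{q^2}$. Because $1,\varepsilon,\varepsilon^2$ are independent over $\F_{q^{2r}}$, the vanishing of the determinant becomes three equations over $\F_{q^{2r}}$, linear in the $D_S$ with coefficients in $\F_{q^2}[s^{\sigma^j}]$.

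\emph{Normalisations.} Multiplication by any $\lambda\in\F_{q^{r}}^*$ is an $\F_{q^6}$-linear automorphism of $\F_{q^{6r}}$ that preserves $U_s$. We may therefore assume $x_1=1$.

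The group $\mathrm{Gal}(\F_{q^{2r}}/\F_q)$ acting on $x$ also acts compatibly on the $D_S$. The remaining task is to show that the three equations force $x_1,x_2,x_3$ to be $\F_q$-dependent, i.e.\ that the ordinary Moore determinant $\det(x_i^{\sigma^j})$ with respect to $\F_{q^2}$ degenerates in an $\F_q$-rational way.

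I would proceed as follows.
\begin{enumerate}
\item Use the $\varepsilon^0$ equation, which involves $D_\emptyset=\det(x_i^{\sigma^j})$ and $a\,s^{1+\sigma+\sigma^2}D_{\{0,1,2\}}$. Since $D_{\{0,1,2\}}=D_\emptyset^\tau$, this yields a relation $D_\emptyset=\mu D_\emptyset^{\tau}$ with $\mu$ explicit in $a$ and $s$.
\item Taking norms under $\tau$, deduce either $D_\emptyset=0$, or $\mathrm{N}(\mu)=1$, and rule out the latter using $s\notin\F_q$.
\item If $D_\emptyset=0$, the $x_i$ are $\F_{q^2}$-dependent. The remaining two equations then reduce to a small system. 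Substituting $x_3=\alpha x_1+\beta x_2$ with $\alpha,\beta\in\F_{q^2}$, this system should force $\alpha,\beta\in\F_q$, using again $s^{\sigma}\neq s$ and $\varepsilon\notin\F_{q^2}$.
\end{enumerate}

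\emph{Main obstacle.} The hard step is this case analysis of the three equations. In particular, the case $D_\emptyset\neq0$ requires controlling the mixed determinants $D_S$ for $|S|=1,2$, which are not simply Frobenius twists of one another. If the direct approach stalls, I would first try the change of variables $y=x^{1-\tau}$ together with the $\F_{q^r}$-normalisation, to rewrite the mixed determinants in terms of $D_\emptyset$ and a single auxiliary determinant. Failing that, I would count $\F_q$-rational points via a degree argument in the unknowns $x_2,x_3$.
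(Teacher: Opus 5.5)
\textbf{Verdict: genuine gap.} Your Steps 1--2 are correct, but Step 3, which carries the remaining content of the theorem, is only asserted.

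Your Moore-determinant framework is sound, and Steps 1--2 close. So the case $D_\emptyset\neq0$ that you call the main obstacle never arises. Put $t=s^{1+q^6+q^{12}}\in\F_{q^r}$. Then $\mu\overline{\mu}=\N_{q^2/q}(a)\,t^2$, and $t^2\in\F_q$ would give $(s^{q^{18}}/s)^2=(t^2)^{q^6}/t^2=1$. Since $z\mapsto z^{q^{18}}$ generates $\operatorname{Gal}(\F_{q^r}/\F_q)$ and has odd order $r$, this forces $s\in\F_q$. Hence $D_\emptyset=0$. Because $D_\emptyset$ is the Moore determinant for $\F_{q^{6r}}/\F_{q^6}$, this says $x_1,x_2,x_3$ are $\F_{q^6}$-dependent, hence $\F_{q^2}$-dependent by linear disjointness.

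The gap is Step 3: you only say the system ``should force'' $\alpha,\beta\in\F_q$, and this is not automatic. It depends on two things you do not mention. First, $\alpha^q-\alpha$ and $\beta^q-\beta$ lie on the single $\F_q$-line $\ker\Tr_{q^2/q}$; with arbitrary weights from $\F_{q^2}$ in their place, the needed independence can fail. Second, it depends on how $X=\F_{q^2}x_1+\F_{q^2}x_2$ meets $s\overline{X}$. One way to finish is as follows.
\begin{itemize}
\item Replace $u_3$ by $u_3-\alpha u_1-\beta u_2=\varepsilon s\bigl((\alpha^q-\alpha)\overline{x_1}+(\beta^q-\beta)\overline{x_2}\bigr)$.
\item If $X\cap s\overline X=\{0\}$, take coordinates in the $\F_{q^6}$-independent elements $x_1,x_2,s\overline{x_1},s\overline{x_2}$; independence is then immediate.
\item If $X=s\overline X$, then $s^2X=X$, which forces $s^2\in\F_q$, a contradiction.
\item If $X\cap s\overline X=\F_{q^2}z$, then $X=\langle z,\overline z/s\rangle_{\F_{q^2}}$. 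A $3\times3$ determinant in the basis $z,\overline z/s,s\overline z$, using the $\F_q$-line property above, finishes the argument.
\end{itemize}
Until an argument of this kind is supplied, the proposal is not a proof.

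Separately, the normalisation $x_1=1$ is wrong as stated. Multiplication by $\lambda\in\F_{q^{2r}}$ preserves $U_s$ only if $\overline\lambda=\lambda$, so $x_1$ can only be moved within $x_1\F_{q^r}^*$. You never use it afterwards, so it does no harm.

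For comparison, the paper works on the dual side. A trace-adjoint argument reduces the rank condition to $D_s(CJ)\cap N_s(CJ)=\{0\}$ for every three-dimensional $\F_q$-subspace $C\le\F_{q^6}$. When $\F_{q^2}C=\F_{q^6}$, a failure would make $s^2$ a root of $\det(I_3-X\overline AA)\in\F_q[X]$, which has degree at most three and constant term one. When $\F_{q^2}C$ is a plane, normal forms $C_\xi$, $C_\infty$ force $s^2\in\F_q$. Completed as sketched, your primal route would be a genuinely different argument: the $\varepsilon^0$-coefficient alone reduces everything to an $\F_{q^2}$-dependence among the $x_i$, and the rest needs only $s^2\notin\F_q$.
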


In particular, \cref{thm:coprime} covers $r=5$, the only case of
\cref{thm:main} not obtained from dimensions three and four.
The corresponding $[2r,r,4]_{q^6/q}$ MRD codes are equivalent to
their duals; see \cref{prop:isodual}. More generally,
\cref{thm:main} gives $\F_{q^6}$-linear $[2r,r,4]_{q^6/q}$ MRD
codes for every $r\ge3$. For $r>3$ their length $2r$ exceeds the
extension degree $6$. In this range the Gabidulin construction is not
available, and $\F_{q^m}$-linear MRD codes correspond to maximum
$h$-scattered subspaces \cite{ZZ2021,MNT2023}; see also
\cite{BMN2023}.

Both proofs rest on the hyperplane characterisation of maximum
$h$-scattered subspaces \cite[Corollary~5.2]{ZZ2021} and on the
correspondence between $\F_q$-subspaces and rank-metric codes
\cite{Randrianarisoa2020,MNT2023}. For a $2r$-dimensional subspace
$U$ of $V(r,q^6)$ they reduce maximum $2$-scatteredness to the
requirement that every nonzero $\F_{q^6}$-linear functional has rank
at least four on $U$ (\cref{prop:rank-code}).

For $U_b$ we bound the number of parameters $b$ for which some
functional has rank at most three, distinguishing according to the
$\F_{q^2}$-span of its image. If this span is contained in a plane,
then $b$ is determined by three auxiliary parameters, which we count.
If the span is $\F_{q^6}$, the image is the fixed space of a
$q$-semilinear involution of $\F_{q^6}$, and the condition on $b$ is
the vanishing of a Dickson determinant. Up to scalar multiplication
there are $q^4+q$ such fixed spaces; a fixed space and its orthogonal
complement with respect to $(x,y)\mapsto\Tr_{q^6/q}(xy)$ exclude the
same parameters, and at most $2q^3$ classes are mapped to themselves.
Pairing the remaining classes gives the bound in \cref{thm:four}.

For $U_s$, a trace-adjoint argument turns the rank condition into
the requirement that two explicit three-dimensional
$\F_{q^r}$-subspaces of $\F_{q^{6r}}$ be complementary, one pair for
each three-dimensional $\F_q$-subspace $C$ of $\F_{q^6}$. If
$\F_{q^2}C=\F_{q^6}$, a failure would make $s^2$ a root of a nonzero
polynomial over $\F_q$ of degree at most three; otherwise, after
rescaling $C$, it would force $s^2\in\F_q$. Both are impossible,
because every element of $\F_{q^r}\setminus\F_q$ has degree at least
five over $\F_q$ when $\gcd(r,6)=1$.

\Cref{sec:preliminaries} contains the common rank criterion
and trace facts. \Cref{sec:rank,sec:bad,sec:count,sec:orthogonality}
develop the dimension-four construction and prove
\cref{thm:four}. \Cref{sec:coprime,sec:complementarity}
give the construction in dimensions coprime to six and prove
\cref{thm:coprime}. \Cref{sec:codes} treats the associated
codes and their duality. \Cref{sec:conclusions} lists open
problems, and \cref{app:q2} contains the computation for $q=2$.

\section{Hyperplane intersections and trace maps}\label{sec:preliminaries}

This section contains the rank criterion used for both constructions
and two facts on subspaces of $\F_{q^6}$.

An $\F_q$-subspace $U$ of $V(r,q^n)$ defines the linear set
\[
 L_U=\{\langle u\rangle_{\F_{q^n}}:u\in U\setminus\{0\}\}
 \subseteq\operatorname{PG}(r-1,q^n).
\]
Its rank is $\dim_{\F_q}U$. The weight of a projective subspace
$\operatorname{PG}(H)$ with respect to $U$ is
$\dim_{\F_q}(U\cap H)$. If $\dim_{\F_q}U=t$, then
\[
 |L_U|\le\frac{q^t-1}{q-1},
\]
with equality if and only if every point has weight one. In this
case $L_U$ and $U$ are called scattered. See \cite{Polverino2010}
for the general theory of linear sets. Recall that, following
\cite{CMPZ2021}, our definition of an $h$-scattered subspace includes
the requirement that it spans $V(r,q^n)$.

In the coding-theoretic part of this section, $n$ denotes the length
of a code and $m$ the degree of the field extension; in our
applications $m=6$ and $n=2r$.
For a vector $v=(v_1,\ldots,v_n)\in\F_{q^m}^n$, put
\[
 \operatorname{wt}_q(v)
 =\dim_{\F_q}\langle v_1,\ldots,v_n\rangle_{\F_q}.
\]
An $[n,k,d]_{q^m/q}$ code is a $k$-dimensional
$\F_{q^m}$-linear subspace of $\F_{q^m}^n$ whose minimum nonzero
rank weight is $d$. The rank-metric Singleton bound is
\begin{equation}\label{eq:singleton}
 mk\le\min\{m(n-d+1),\ n(m-d+1)\}.
\end{equation}
A code attaining equality is called maximum rank distance, or MRD.
The dual of an MRD code with respect to the standard
$\F_{q^m}$-bilinear form is again MRD. For $n\le m$ this is in
\cite{Gabidulin1985}; in general it follows from Delsarte's theorem
\cite{Delsarte1978}, since the standard dual of an $\F_{q^m}$-linear
code corresponds to the Delsarte dual of the associated matrix code
\cite{Ravagnani2016}.

Let $U\le\F_{q^m}^k$ have dimension $n$ over $\F_q$ and span
$\F_{q^m}^k$ over $\F_{q^m}$. Choose an $\F_q$-basis
$u_1,\ldots,u_n$ of $U$ and put
\[
 \mathcal C_U=\{(\ell(u_1),\ldots,\ell(u_n)):
       \ell\in\operatorname{Hom}_{\F_{q^m}}
                    (\F_{q^m}^k,\F_{q^m})\}.
\]
The spanning condition makes the evaluation map injective, so
$\mathcal C_U$ has dimension $k$ over $\F_{q^m}$. We regard the
$u_i$ as column vectors and the codewords as row vectors, so that
the matrix $G$ with columns $u_1,\ldots,u_n$ generates
$\mathcal C_U$. Replacing the basis of $U$ multiplies $G$ on the
right by a matrix in $\GL_n(q)$, which preserves rank weights.
This is the code-system correspondence of
\cite{Randrianarisoa2020}; see also \cite[Section~2]{MNT2023}.

The following is the case needed here of the hyperplane
characterisation in \cite[Corollary~5.2]{ZZ2021} and the
correspondence in \cite[Theorem~4.9]{MNT2023}; the hypothesis
$n\ge h+3$ of \cite[Corollary~5.2]{ZZ2021} holds, since $n=6$ and
$h=2$. For completeness, we include a proof.

\begin{proposition}\label{prop:rank-code}
Let $r\ge3$, and let $U$ be a $2r$-dimensional $\F_q$-subspace
spanning $V(r,q^6)$ over $\F_{q^6}$. The following are equivalent:
\begin{enumerate}
\item $U$ is maximum $2$-scattered;
\item $\dim_{\F_q}\ell(U)\ge4$ for every nonzero
      $\F_{q^6}$-linear functional $\ell$;
\item $\mathcal C_U$ is a $[2r,r,4]_{q^6/q}$ MRD code.
\end{enumerate}
Equivalently, every $\F_{q^6}$-hyperplane meets $U$ in dimension
at most $2r-4$ over $\F_q$. Moreover, condition~(2) itself implies
the spanning condition.
\end{proposition}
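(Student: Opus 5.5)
The plan is to route everything through the dictionary between functionals, hyperplanes and codewords. Each nonzero $\F_{q^6}$-linear functional $\ell$ gives the codeword $c_\ell=(\ell(u_1),\ldots,\ell(u_{2r}))$. Since the $\ell(u_i)$ span $\ell(U)$ over $\F_q$, we have $\operatorname{wt}_q(c_\ell)=\dim_{\F_q}\ell(U)$. Rank--nullity applied to $\ell|_U$ gives $\dim_{\F_q}(U\cap\ker\ell)=2r-\dim_{\F_q}\ell(U)$. Every $\F_{q^6}$-hyperplane is $\ker\ell$ for some nonzero $\ell$. So (2) is equivalent to the hyperplane formulation, and also to the minimum distance of $\mathcal C_U$ being at least $4$. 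By \eqref{eq:singleton} with $m=6$, $n=2r$, $k=r$, we get $6r\le 6(2r-d+1)$, so $d\le r+1$, and $6r\le 2r(7-d)$, so $d\le 4$. Hence a $[2r,r,d]$ code with $d\ge4$ has $d=4$ and is MRD, while an MRD code with these parameters has $d=4$. This proves (2)$\Leftrightarrow$(3).

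For the final remark, if $U$ failed to span, some nonzero $\ell$ would vanish on $\langle U\rangle_{\F_{q^6}}$. Then $\ell(U)=0$, which contradicts (2).

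For (1)$\Rightarrow$(2), let $U$ be maximum $2$-scattered and let $H=\ker\ell$ be a hyperplane. I would argue by induction on the dimension of $\F_{q^6}$-subspaces, following the proof of \cite[Theorem~2.3]{CMPZ2021}. The claim is that every $\F_{q^6}$-subspace $W$ of dimension $j\ge2$ meets $U$ in dimension at most $2j-2$ when $j<r$. For $j=2$ this is the definition. For the inductive step, suppose $\dim_{\F_q}(U\cap W)\ge 2j-1$ with $\dim W=j$. Choose a $(j-1)$-subspace $W'\subset W$; there are many such. Counting, or intersecting with a complementary point, gives $\dim(U\cap W')\ge 2j-1-6+\ldots$, which is too weak directly. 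So instead I would use the standard argument: the quotient $V/\langle v\rangle$ for a point $\langle v\rangle$ of weight $w$ yields a subspace of dimension $2r-w$ that is "$1$-scattered-like". That argument is messy.

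A cleaner route is to cite \cite[Corollary~5.2]{ZZ2021}, whose hypothesis $n\ge h+3$ holds. It states that $U$ is maximum $h$-scattered if and only if every hyperplane meets $U$ in dimension at most $\dim U-h-2=2r-4$. This gives both (1)$\Rightarrow$(2) and (2)$\Rightarrow$(1) at once.

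If a self-contained proof of (2)$\Rightarrow$(1) is wanted, it is easy in one direction. Suppose a $2$-dimensional $\F_{q^6}$-subspace $S$ had $\dim(U\cap S)\ge3$. Since $r\ge3$, choose a hyperplane $H\supseteq S$. Then $\dim(U\cap H)$ exceeds $2r-4$ provided that the quotient $U/(U\cap S)$ meets $H/S$ in dimension at least $\dim_{\F_q}(U/(U\cap S))-3$. This can be achieved by choosing $H/S$ to be a hyperplane of $V/S\cong V(r-2,q^6)$ that contains many image vectors, since any $\F_q$-subspace of dimension at most $2r-3$ in $V(r-2,q^6)$ can be cut by a hyperplane in codimension at most $6$. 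I expect exactly this step, getting codimension $3$ rather than $6$, to be the main obstacle. That is why I would rely on \cite[Corollary~5.2]{ZZ2021} together with \cite[Theorem~4.9]{MNT2023} for (1)$\Leftrightarrow$(2)/(3), and present the elementary computations above for the rest.
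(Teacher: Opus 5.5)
Your treatment of (2)$\Leftrightarrow$(3), the hyperplane reformulation and the spanning remark is correct and matches the paper. One small omission: when you conclude that a code with $d=4$ is MRD, note that $\min\{6(2r-3),6r\}=6r$ needs $r\ge3$.

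The gap is the equivalence with (1). You do not prove it, and both self-contained routes you sketch fail for structural reasons, not merely technical ones.

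\begin{itemize}
\item Your inductive claim is false: it says every $j$-dimensional $\F_{q^6}$-subspace with $2\le j<r$ meets $U$ in dimension at most $2j-2$. Take $r=6$ and let $U_1\oplus U_2$ be the direct sum of two copies of the three-dimensional example. It is maximum $2$-scattered by \cite[Theorem~2.4]{CMPZ2021}. Yet it meets $V(3,q^6)\oplus0$ in $U_1$, of dimension $6>4$.
\item The direct attempt at (2)$\Rightarrow$(1) through a hyperplane $H\supseteq S$ cannot work either. For $r=4$, take $V=S\oplus T$ and $U=U_S\oplus U_T$. Here $U_S$ is a three-dimensional $\F_q$-subspace spanning $S$, and $U_T$ is a five-dimensional scattered subspace of $T$. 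Every hyperplane through $S$ has the form $S\oplus P$ with $P$ a point of $T$. It meets $U$ in $U_S\oplus(U_T\cap P)$, of dimension at most $4=2r-4$, even though $\dim_{\F_q}(U\cap S)=3$. The violating hyperplanes here are those of the form $Q\oplus T$, and they do not contain $S$. So codimension $3$ cannot be extracted from the quotient $V/S$.
\end{itemize}

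Falling back on \cite[Corollary~5.2]{ZZ2021} is a legitimate citation; the paper itself notes that the proposition is a special case of it. But then the only nontrivial implication is outsourced. Also, the hyperplane bound there reads $\dim U-n+h$; it agrees with your $\dim U-h-2$ only because $n=6=2h+2$.

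The missing idea is to work with the dual code instead of with geometry.
\begin{itemize}
\item A nonzero $v\in\mathcal C_U^\perp$ is a relation $\sum_i v_iu_i=0$.
\item If $\operatorname{wt}_q(v)=t$, an $\F_q$-change of basis of $U$ turns this into $\alpha_1u_1+\cdots+\alpha_tu_t=0$, with both the $u_i$ and the $\alpha_i$ independent over $\F_q$. The case $t=1$ is impossible.
\item Hence $d(\mathcal C_U^\perp)\ge4$ if and only if any three $\F_q$-independent vectors of $U$ are $\F_{q^6}$-independent. This is exactly $2$-scatteredness. A dependent pair spans a point, and the spanning hypothesis supplies a third vector of $U$ off that point, giving three independent vectors in a plane.
\item Your Singleton computation applies verbatim to $\mathcal C_U^\perp$, which is also a $[2r,r]$ code. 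So the condition says $\mathcal C_U^\perp$ is MRD.
\item The dual of an MRD code is MRD (Delsarte), so this is equivalent to $\mathcal C_U$ being MRD, i.e.\ to (3).
\end{itemize}
MRD duality is precisely what carries the condition on $2$-dimensional subspaces over to the condition on hyperplanes. That is the step your geometric arguments could not make.
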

\begin{proof}
Let $G$ have as columns an $\F_q$-basis of $U$.
A nonzero word $v$ of $\mathcal C_U^\perp$ satisfies
$Gv^{\mathsf T}=0$. If its rank weight is $t\le3$, a change of
coordinates over $\F_q$, together with the corresponding change
of basis of $U$, puts this relation in the form
\[
 \alpha_1u_1+\cdots+\alpha_tu_t=0,
\]
where $u_1,\ldots,u_t$ are $\F_q$-independent and the
$\alpha_i\in\F_{q^6}$ are $\F_q$-independent. The case $t=1$ cannot
occur, since $u_1\ne0$.
Conversely, any nontrivial $\F_{q^6}$-linear relation among at
most three $\F_q$-independent elements of $U$ gives a nonzero
word of rank at most three in the dual, after extending those
elements to a basis of $U$.

The $2$-scattered condition is equivalent to every three
$\F_q$-independent elements of $U$ being $\F_{q^6}$-independent.
It also excludes a dependent pair: such a pair lies on an
$\F_{q^6}$-line, and the spanning condition supplies an element
of $U$ outside that line, giving three $\F_q$-independent
elements in an $\F_{q^6}$-plane. Thus condition~(1) is equivalent
to $d(\mathcal C_U^\perp)\ge4$.

Both codes have length $2r$ and dimension $r$ over $\F_{q^6}$.
By \eqref{eq:singleton},
\[
 6r\le2r(7-d),
\]
so $d\le4$. For $d=4$ and $r\ge3$, the right-hand side of
\eqref{eq:singleton} is $\min\{6(2r-3),6r\}=6r$.
Hence a code with these parameters is MRD if and only if its minimum
distance is $4$. Since duality preserves the MRD property, (1) and (3)
are equivalent.

The rank weight of the word defined by $\ell$ is
$\dim_{\F_q}\ell(U)$. This proves the equivalence of (2) and
(3). If $H=\ker\ell$, then
\[
 \dim_{\F_q}(U\cap H)=2r-\dim_{\F_q}\ell(U),
\]
which gives the hyperplane formulation. Finally, if $U$ does
not span the ambient space, some nonzero functional vanishes
on $U$, contradicting (2).
\end{proof}

For a finite extension $\F_{q^a}/\F_{q^c}$, we use the
nondegenerate trace pairing
\[
 (x,y)\longmapsto\Tr_{q^a/q^c}(xy).
\]
Orthogonal complements are taken with respect to the pairing
specified in each application. The adjoint of a linear map
between spaces with nondegenerate pairings has the same rank
as the map, and the orthogonal complement of its kernel is the
image of its adjoint. These facts follow by writing the
pairings in bases: the adjoint matrix is obtained from the
transpose by multiplying by invertible Gram matrices.

\begin{lemma}\label{lem:trace-planes}
Every two-dimensional $\F_{q^m}$-subspace of $\F_{q^{3m}}$
is a scalar multiple of $\ker\Tr_{q^{3m}/q^m}$. In particular,
any two such subspaces are scalar multiples of one another.
\end{lemma}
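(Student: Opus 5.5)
The plan is to use the trace pairing $(x,y)\mapsto\Tr_{q^{3m}/q^m}(xy)$ on $\F_{q^{3m}}$ over $\F_{q^m}$, which is nondegenerate. Let $W$ be a two-dimensional $\F_{q^m}$-subspace of $\F_{q^{3m}}$. Its orthogonal complement $W^\perp$ is one-dimensional over $\F_{q^m}$, so $W^\perp=\langle\lambda\rangle_{\F_{q^m}}$ for some nonzero $\lambda\in\F_{q^{3m}}$. By nondegeneracy, $W=(W^\perp)^\perp$, so
\[
 W=\{x\in\F_{q^{3m}}:\Tr_{q^{3m}/q^m}(\lambda x)=0\}.
\]
Here we use that $\Tr_{q^{3m}/q^m}(\lambda cx)=c\Tr_{q^{3m}/q^m}(\lambda x)$ for $c\in\F_{q^m}$, so orthogonality to $\lambda$ is the same as orthogonality to all of $\langle\lambda\rangle_{\F_{q^m}}$.

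Next, observe that $x\mapsto\lambda x$ is an $\F_{q^m}$-linear bijection of $\F_{q^{3m}}$. It therefore maps $W$ bijectively onto $\ker\Tr_{q^{3m}/q^m}$, which gives
\[
 W=\lambda^{-1}\ker\Tr_{q^{3m}/q^m}.
\]
Equivalently, one can compare dimensions directly: the trace is surjective, so its kernel has dimension $2$ over $\F_{q^m}$, and $\lambda^{-1}\ker\Tr_{q^{3m}/q^m}\subseteq W$ has the same dimension as $W$.

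The second assertion follows at once. If $W_1=\lambda_1^{-1}\ker\Tr_{q^{3m}/q^m}$ and $W_2=\lambda_2^{-1}\ker\Tr_{q^{3m}/q^m}$, then $W_2=(\lambda_1/\lambda_2)W_1$.

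No step is a real obstacle. The only point needing care is the nondegeneracy of the trace form, which holds because the trace of a finite separable extension is not identically zero.
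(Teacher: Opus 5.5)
Your proposal is correct and follows essentially the same argument as the paper. The paper writes the plane as the kernel of an $\F_{q^m}$-linear functional $x\mapsto\Tr_{q^{3m}/q^m}(ax)$ and notes that this kernel is $a^{-1}\ker\Tr_{q^{3m}/q^m}$; your $W^\perp=\langle\lambda\rangle_{\F_{q^m}}$ formulation is the same nondegeneracy argument phrased through orthogonal complements.
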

\begin{proof}
Every $\F_{q^m}$-linear functional on $\F_{q^{3m}}$ is
$x\mapsto\Tr_{q^{3m}/q^m}(ax)$ for a unique
$a\in\F_{q^{3m}}$, by nondegeneracy of the trace pairing.
For $a\ne0$, its kernel is
$a^{-1}\ker\Tr_{q^{3m}/q^m}$, and every plane is the
kernel of such a functional.
\end{proof}

\begin{lemma}\label{lem:fixed-spaces}
A three-dimensional $\F_q$-subspace $V\le\F_{q^6}$ spans
$\F_{q^6}$ over $\F_{q^2}$ if and only if it is the fixed
space of a $q$-semilinear involution over $\F_{q^2}$.
This involution is uniquely determined by $V$.
\end{lemma}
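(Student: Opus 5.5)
This is Galois descent for the extension $\F_{q^6}/\F_{q^2}$, relative to the $q$-semilinear structure. Throughout, $\sigma$ means an $\F_q$-linear map $\F_{q^6}\to\F_{q^6}$ with $\sigma(\lambda x)=\lambda^q\sigma(x)$ for $\lambda\in\F_{q^2}$ and $\sigma^2=\mathrm{id}$. Such a map is consistent because $\lambda^{q^2}=\lambda$ on $\F_{q^2}$, so the Frobenius of $\F_{q^2}/\F_q$ is itself an involution. The fixed space $\Fix(\sigma)$ is an $\F_q$-subspace.

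For the implication from a spanning $V$ to an involution, suppose $\dim_{\F_q}V=3$ and $\F_{q^2}V=\F_{q^6}$. Comparing $\F_q$-dimensions, $\dim_{\F_q}(\F_{q^2}\otimes_{\F_q}V)=6=\dim_{\F_q}\F_{q^6}$. The multiplication map $\F_{q^2}\otimes_{\F_q}V\to\F_{q^6}$ is surjective, hence an isomorphism. Equivalently, an $\F_q$-basis $v_1,v_2,v_3$ of $V$ is an $\F_{q^2}$-basis of $\F_{q^6}$. I then define
\[
 \sigma\Bigl(\sum_i\lambda_iv_i\Bigr)=\sum_i\lambda_i^qv_i,\qquad \lambda_i\in\F_{q^2}.
\]
This map is well defined, $q$-semilinear over $\F_{q^2}$, and an involution. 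Its fixed space consists of the sums with all $\lambda_i\in\F_{q^2}^{\langle q\rangle}=\F_q$, which is exactly $V$.

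For the converse, let $\sigma$ be such an involution. The key step is $\F_{q^2}\Fix(\sigma)=\F_{q^6}$ together with $\dim_{\F_q}\Fix(\sigma)=3$, which is the standard descent lemma. Fix $\mu\in\F_{q^2}\setminus\F_q$, for instance one with $\mu^q\neq\mu$. Any $x$ can be written as a combination of $x+\sigma(x)$ and $\mu x+\sigma(\mu x)$, both of which lie in $\Fix(\sigma)$. Explicitly,
\[
 (\mu-\mu^q)x=\bigl(\mu x+\sigma(\mu x)\bigr)-\mu^q\bigl(x+\sigma(x)\bigr),
\]
so $x\in\F_{q^2}\Fix(\sigma)$. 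Next, $\Fix(\sigma)\cap\mu\Fix(\sigma)=0$: if $y=\mu z$ with $y,z$ fixed, then $y=\sigma(y)=\mu^qz$, which forces $z=0$. So $\F_{q^6}=\Fix(\sigma)\oplus\mu\Fix(\sigma)$ as $\F_q$-spaces, giving $\dim_{\F_q}\Fix(\sigma)=3$ and spanning. The usual Artin argument (an $\F_q$-independent subset of the fixed space is $\F_{q^2}$-independent) gives the same conclusion.

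For uniqueness, any involution $\sigma'$ with $\Fix(\sigma')=V$ agrees with $\sigma$ on $V$. Since $V$ spans over $\F_{q^2}$, $q$-semilinearity forces $\sigma'=\sigma$ on all of $\F_{q^6}$. The only mildly delicate point is the descent step; everything else is bookkeeping. In particular, I must make sure the hypothesis $\dim V=3$ is used to get an $\F_{q^2}$-basis, rather than merely a spanning set.
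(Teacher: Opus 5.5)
Your proposal is correct and follows essentially the same route as the paper: the forward direction and uniqueness use the involution defined coordinatewise on an $\F_q$-basis of $V$, which is an $\F_{q^2}$-basis of $\F_{q^6}$. The converse combines $\Fix(\sigma)\cap\mu\Fix(\sigma)=\{0\}$ with the explicit expression of $x$ through the fixed vectors $x+\sigma(x)$ and $\mu x+\sigma(\mu x)$, exactly as the paper does with $\kappa$; your only addition is the tensor-product justification that an $\F_q$-basis of $V$ is an $\F_{q^2}$-basis, which the paper takes for granted.
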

\begin{proof}
Suppose first that $\F_{q^2}V=\F_{q^6}$.
An $\F_q$-basis $v_1,v_2,v_3$ of $V$ is then an
$\F_{q^2}$-basis of $\F_{q^6}$. Put
\[
 \rho(\lambda_1v_1+\lambda_2v_2+\lambda_3v_3)
 =\lambda_1^qv_1+\lambda_2^qv_2+\lambda_3^qv_3
 \qquad(\lambda_i\in\F_{q^2}).
\]
This is the unique $q$-semilinear map fixing each $v_i$; in
particular, every $q$-semilinear involution with fixed space $V$
equals $\rho$. It satisfies $\rho^2=1$ and $\Fix\rho=V$.

Conversely, let $\rho$ be a $q$-semilinear involution and put
$V=\Fix\rho$. Choose
$\kappa\in\F_{q^2}\setminus\F_q$. If $v=\kappa w$ with
$v,w\in V$, then $\kappa w=\rho(v)=\kappa^qw$, so $w=0$.
Thus $V\cap\kappa V=\{0\}$ and $\dim_{\F_q}V\le3$.
For every $z\in\F_{q^6}$, both $z+\rho(z)$ and
$\kappa^qz+\kappa\rho(z)$ belong to $V$, and
\[
 z=\frac{(\kappa^qz+\kappa\rho(z))-\kappa(z+\rho(z))}
          {\kappa^q-\kappa}.
\]
Hence $\F_{q^2}V=\F_{q^6}$, giving
$\dim_{\F_q}V\ge3$ and proving the claim.
\end{proof}

In both constructions, we distinguish whether the
$\F_{q^2}$-span of a three-dimensional $\F_q$-subspace of
$\F_{q^6}$ has dimension two or three. In the first case we use
\cref{lem:trace-planes}; for the trace-kernel family,
\cref{lem:fixed-spaces} describes the subspaces in the second case.

\section{The trace-kernel family}\label{sec:rank}

We use the notation of \eqref{eq:family}.
The trace $\Tr_{q^{12}/q^4}$ is a nonzero $\F_{q^4}$-linear map
onto $\F_{q^4}$, so $W_0$ has dimension two over $\F_{q^4}$.

For $\alpha,\beta\in \F_{q^{12}}$ write
\[
 c_{\alpha,\beta}(x)=\Tr_{q^{12}/q^6}
       (\alpha x+\beta g_b(x)).
\]
We call $b$ \emph{bad} if $U_b$ is not maximum $2$-scattered, and
\emph{good} otherwise. \Cref{prop:rank-code}, with $r=4$, gives
\begin{equation}\label{eq:rankcriterion}
 b\text{ is bad}\quad\Longleftrightarrow\quad
 \dim_{\F_q}c_{\alpha,\beta}(W_0)\le3
 \text{ for some }(\alpha,\beta)\ne(0,0).
\end{equation}
In fact, every $\F_{q^6}$-hyperplane of $\F_{q^{12}}^2$ is the kernel of
$(x,y)\mapsto\Tr_{q^{12}/q^6}(\alpha x+\beta y)$ for a nonzero pair,
by nondegeneracy of the trace pairing, and its intersection with $U_b$
has dimension $8-\dim_{\F_q}c_{\alpha,\beta}(W_0)$.
The required upper bound on hyperplane intersections is $4$.
It also implies that $U_b$ spans $\F_{q^{12}}^2$ over $\F_{q^6}$.

We need two facts on linearised polynomials. Every $\F_q$-linear map $\F_{q^{12}}\to \F_{q^{12}}$ has a unique
expression $\sum_{i=0}^{11}a_i x^{q^i}$. A polynomial of this form
vanishes on $W_0$ if and only if
\begin{equation}\label{eq:restriction}
 a_j=a_{j+4}=a_{j+8}\qquad(0\le j\le3).
\end{equation}
Indeed, a linear map vanishing on $W_0$ factors through
$\Tr_{q^{12}/q^4}$. Every $\F_q$-linear map from $\F_{q^4}$ to
$\F_{q^{12}}$ has a unique expression
$z\mapsto\sum_{j=0}^3a_jz^{q^j}$. Composing with the trace gives
the coefficient relations in \eqref{eq:restriction}.

Here exponents $q^i$ are read with $i$ modulo $12$ (or modulo $6$ for
maps of $\F_{q^6}$); since $6$ and $12$ are even, the parity of $i$ is
well defined. In particular, if a sum of an even-exponent polynomial
and an odd-exponent polynomial vanishes on $W_0$, then both vanish
on $W_0$.

An $\F_{q^m}$-linear map is singular if it has a nonzero kernel.
We use the following determinant criterion.
\begin{lemma}[Dickson's criterion; see e.g.\ {\cite{WL2013}}]\label{lem:dickson}
An $\F_{q^m}$-linear map
$\psi(z)=a_0z+a_1z^{q^m}+a_2z^{q^{2m}}$ on $\F_{q^{3m}}$
is singular if and only if
\begin{equation}\label{eq:dickson}
 \N_{q^{3m}/q^m}(a_0)+\N_{q^{3m}/q^m}(a_1)
 +\N_{q^{3m}/q^m}(a_2)
 -\Tr_{q^{3m}/q^m}(a_0a_1^{q^m}a_2^{q^{2m}})=0.
\end{equation}
\end{lemma}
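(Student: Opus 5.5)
The plan is to reduce the singularity of $\psi$ to the vanishing of a $3\times3$ determinant over $\F_{q^{3m}}$, and then expand that determinant. Put $\sigma(z)=z^{q^m}$; it generates $\operatorname{Gal}(\F_{q^{3m}}/\F_{q^m})$ and has order $3$.

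First I would show that $\psi$ is singular if and only if the circulant-type matrix
\[
 D=\begin{pmatrix} a_0 & a_1 & a_2\\ a_2^{\sigma} & a_0^{\sigma} & a_1^{\sigma}\\ a_1^{\sigma^2} & a_2^{\sigma^2} & a_0^{\sigma^2}\end{pmatrix}
\]
is singular. The $i$-th row lists the coefficients of $\sigma^i\circ\psi$ written in the basis $z,z^{\sigma},z^{\sigma^2}$. Thus $D$ sends the column $(z,z^\sigma,z^{\sigma^2})^{\mathsf T}$ to $(\psi(z),\psi(z)^\sigma,\psi(z)^{\sigma^2})^{\mathsf T}$.

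For one direction, assume $\psi(z)=0$ with $z\ne0$. Then $(z,z^\sigma,z^{\sigma^2})^{\mathsf T}$ is a nonzero vector in the kernel of $D$, so $\det D=0$.

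For the converse I would use the standard Moore/Artin argument. The maps $\F_{q^{3m}}\to\F_{q^{3m}}^3$, $z\mapsto(z,z^\sigma,z^{\sigma^2})$, have image spanning $\F_{q^{3m}}^3$ over $\F_{q^{3m}}$. This holds by Dedekind independence of characters, or because the Moore matrix of an $\F_{q^m}$-basis is invertible. Now suppose $\psi$ is injective. Then it is bijective on the finite field, so the vectors $(\psi(z),\psi(z)^\sigma,\psi(z)^{\sigma^2})$ also span $\F_{q^{3m}}^3$. Hence the image of $D$ is everything and $\det D\ne0$.

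Next I would expand $\det D$ by the Sarrus rule. The diagonal product is $a_0a_0^\sigma a_0^{\sigma^2}=\N_{q^{3m}/q^m}(a_0)$. The other two positive terms are $a_1a_1^\sigma a_1^{\sigma^2}=\N(a_1)$ and $a_2a_2^{\sigma}a_2^{\sigma^2}=\N(a_2)$. The three negative terms are
\[
 a_2a_0^\sigma a_1^{\sigma^2},\qquad a_1a_2^\sigma a_0^{\sigma^2},\qquad a_0a_1^\sigma a_2^{\sigma^2}.
\]
These are exactly the three Galois conjugates of $a_0a_1^{\sigma}a_2^{\sigma^2}$, so their sum is $\Tr_{q^{3m}/q^m}(a_0a_1^{q^m}a_2^{q^{2m}})$. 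This gives \eqref{eq:dickson}.

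The main step requiring care is the converse direction of the determinant criterion, namely the spanning property of the Moore vectors. I would settle it by noting that a nonzero linear relation $c_0z+c_1z^\sigma+c_2z^{\sigma^2}=0$ holding for all $z$ is impossible: the left side is a polynomial of degree at most $q^{2m}$ with $q^{3m}$ roots.
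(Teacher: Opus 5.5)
Your proof is correct and follows essentially the same route as the paper. The shared steps are: the Dickson matrix $D$ maps the Moore vector of $z$ to that of $\psi(z)$; the Moore vectors span because a nonzero linearised polynomial of degree at most $q^{2m}$ cannot have $q^{3m}$ roots; and expanding $\det D$ gives the three norms minus the trace. The only difference is that the paper multiplies $D$ by the Moore matrix of an $\F_{q^m}$-basis, which also yields $\det D=\det A$ for the matrix $A$ of $\psi$ over $\F_{q^m}$ (used later for $\det_{\F_{q^2}}(u)$). Your kernel-and-spanning argument gives exactly the singularity criterion the statement asks for.
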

\begin{proof}
Let $\mathcal{D}=(a_{j-i}^{q^{mi}})_{0\le i,j\le2}$, with coefficient
indices read modulo $3$. For an $\F_{q^m}$-basis $y_1,y_2,y_3$,
\[
 \mathcal{D}\,(y_j^{q^{mi}})_{i,j}=(\psi(y_j)^{q^{mi}})_{i,j}.
\]
The matrix on a basis is invertible: otherwise a nonzero linearised
polynomial of degree at most $q^{2m}$ would vanish on the basis and
therefore on all $q^{3m}$ field elements. Hence the matrix on the
right is singular if and only if the images $\psi(y_j)$ are
dependent, that is, if and only if $\psi$ is singular.
If $A\in\F_{q^m}^{3\times3}$ is the matrix of $\psi$ in the basis
$y_1,y_2,y_3$, then $(\psi(y_j)^{q^{mi}})_{i,j}=(y_j^{q^{mi}})_{i,j}A$,
so $\det\mathcal{D}=\det A$ is the determinant of $\psi$ as an
$\F_{q^m}$-linear map. Expanding $\det\mathcal{D}$ gives
\eqref{eq:dickson}.
\end{proof}

For an $\F_{q^2}$-linear map $u(z)=u_0z+u_1z^{q^2}+u_2z^{q^4}$ of
$\F_{q^6}$ we write $\det_{\F_{q^2}}(u)$ for its determinant as an
$\F_{q^2}$-linear map. By the proof of \cref{lem:dickson} with $m=2$,
\[
 \det_{\F_{q^2}}(u)=\N_{q^6/q^2}(u_0)+\N_{q^6/q^2}(u_1)
   +\N_{q^6/q^2}(u_2)-\Tr_{q^6/q^2}(u_0u_1^{q^2}u_2^{q^4}).
\]

\section{Characterisation of the bad parameters}\label{sec:bad}

Write an $\F_{q^2}$-linear map on $\F_{q^6}$ as
\[
 u(z)=u_0z+u_1z^{q^2}+u_2z^{q^4},\qquad u_0,u_1,u_2\in \F_{q^6},
\]
and put $\rho_u(z)=u(z^{q^3})=u_2z^q+u_0z^{q^3}+u_1z^{q^5}$.
Expanding the composition shows that
$\rho_u^2=1$ is equivalent to the three equations
\begin{equation}\label{eq:involution}
\begin{aligned}
 u_0u_0^{q^3}+u_1u_2^{q^5}+u_2u_1^q&=1,\\
 u_0u_1^{q^3}+u_1u_0^{q^5}+u_2u_2^q&=0,\\
 u_0u_2^{q^3}+u_1u_1^{q^5}+u_2u_0^q&=0.
\end{aligned}
\end{equation}

Define
\begin{equation}\label{eq:Lambda}
 \Lambda_{u,b}(y)=u_0(1-b^{q^3})y-u_2y^{q^4}
                         +u_1b^{q^{11}}y^{q^8},
\end{equation}
and
\begin{equation}\label{eq:Delta}
\begin{aligned}
 \Delta_b(u)={}&\N_{q^{12}/q^4}(1-b^{q^3})\N_{q^6/q^2}(u_0)
       +\N_{q^{12}/q^4}(b^{q^3})\N_{q^6/q^2}(u_1)\\
 &-\N_{q^6/q^2}(u_2)
 +\Tr_{q^{12}/q^4}
   \bigl((1-b^{q^3})b^{q^7}u_0u_1^{q^2}u_2^{q^4}\bigr).
\end{aligned}
\end{equation}

\begin{theorem}\label{thm:bad}
Let $b\in\F_{q^{12}}$ satisfy $\N_{q^{12}/q^4}(b)\ne-1$.
The $\F_{q^4}$-linear map $z\mapsto z+(zb)^{q^8}$ of $\F_{q^{12}}$ is
bijective by \cref{lem:bad-alphabeta}; let $\beta_0$ be the preimage
of $1$, so that
\[
 \beta_0+(\beta_0b)^{q^8}=1,
\]
and put $\eta=\beta_0^{q^{11}}$.
Then $b$ is bad if and only if at least one of the following holds:
\begin{enumerate}
\item There are $\varepsilon\in \F_{q^6}\setminus\F_{q^2}$,
      $\nu\in\F_{q^4}^*$ and $\tau\in\F_{q^4}$ such that
\begin{equation}\label{eq:planecondition}
 \varepsilon+\nu\bigl(\varepsilon^q+
       \eta(\varepsilon^{q^5}-\varepsilon^q)\bigr)=\tau.
\end{equation}
\item There are $u_0,u_1,u_2\in \F_{q^6}$ satisfying
      \eqref{eq:involution} and $\Delta_b(u)=0$.
\end{enumerate}
In the first condition, $\varepsilon$ can be restricted to any set of
$q^2+1$ representatives of the sets
$\{\kappa\varepsilon+\mu:\kappa\in\F_{q^2}^*,\ \mu\in\F_{q^2}\}$,
$\varepsilon\in\F_{q^6}\setminus\F_{q^2}$, that is, of the points of
the projective line $\mathrm{PG}(\F_{q^6}/\F_{q^2})$, where
$\F_{q^6}/\F_{q^2}$ denotes the quotient $\F_{q^2}$-vector space.
\end{theorem}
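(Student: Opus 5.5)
The plan is to apply the rank criterion \eqref{eq:rankcriterion} and sort the offending functionals by the $\F_{q^2}$-span of their image. The key structural fact is how $c=c_{\alpha,\beta}$ interacts with $\F_{q^2}=\F_{q^4}\cap\F_{q^6}$. For $\lambda\in\F_{q^2}$ we have $g_b(\lambda x)=\lambda^q g_b(x)$, so
\[
c(\lambda x)=\lambda\,\Tr_{q^{12}/q^6}(\alpha x)+\lambda^q\,\Tr_{q^{12}/q^6}(\beta g_b(x)).
\]
Thus $c=A+B$, where $A$ is $\F_{q^2}$-linear and $B$ is $q$-semilinear on the two-dimensional $\F_{q^4}$-space $W_0$. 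Let $V=c(W_0)$ have $\dim_{\F_q}V\le3$. Then $\F_{q^2}V$ is either contained in an $\F_{q^2}$-plane, or equals $\F_{q^6}$. In the latter case $\dim_{\F_q}V=3$, and by \cref{lem:fixed-spaces} $V=\Fix\rho$ for a unique $q$-semilinear involution $\rho$. Conversely, any nonzero $c$ with image in $\Fix\rho$ or in a suitable $3$-dimensional $\F_q$-subspace of a plane has rank at most three. So $b$ is bad iff one of the two cases occurs.

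\emph{Second case.} Write $\rho=\rho_u$; then $\rho^2=1$ is exactly \eqref{eq:involution}. The condition $V\subseteq\Fix\rho$ reads $u\bigl(c(x)^{q^3}\bigr)=c(x)$ for all $x\in W_0$. Expand $c$ as a linearised polynomial in $x$ over $\F_{q^{12}}$, using $\Tr_{q^{12}/q^6}(y)=y+y^{q^6}$ and $g_b(x)=x^q+bx^{q^5}$. Then separate the even-exponent and odd-exponent parts, using the parity remark after \eqref{eq:restriction}. Each part must vanish on $W_0$, which by \eqref{eq:restriction} gives explicit linear relations among $\alpha$, $\beta$ and their conjugates, with coefficients in $u$ and $b$. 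I expect the even part to express $\alpha$ through $\beta$, and the odd part to give a single $\F_{q^4}$-linear equation for a suitable conjugate $y$ of $\beta$. After substitution this equation should be exactly $\Lambda_{u,b}(y)=0$ as in \eqref{eq:Lambda}. A nonzero pair $(\alpha,\beta)$ then exists iff $\Lambda_{u,b}$ is singular on $\F_{q^{12}}$ over $\F_{q^4}$. By \cref{lem:dickson} with $m=4$, singularity is equivalent to the vanishing of $\Delta_b(u)$ in \eqref{eq:Delta}, after the norms of $u_i\in\F_{q^6}$ are rewritten as $\N_{q^6/q^2}$. One must also check that $\beta=0$ forces $\alpha=0$, so that no spurious solutions occur.

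\emph{First case.} By \cref{lem:trace-planes} with $m=2$, any $\F_{q^2}$-plane is $\theta(\F_{q^2}+\F_{q^2}\varepsilon)$ for some $\varepsilon\in\F_{q^6}\setminus\F_{q^2}$. Replacing $(\alpha,\beta)$ by $(\theta^{-1}\alpha,\theta^{-1}\beta)$, we may take $\theta=1$. This plane is unchanged under $\varepsilon\mapsto\kappa\varepsilon+\mu$, which is why only $q^2+1$ representatives are needed. The image lies in the plane iff a fixed $\F_{q^2}$-linear functional (a twisted $\Tr_{q^6/q^2}$) kills $c$. The image has $\F_q$-dimension at most three iff, in addition, some nonzero $\F_q$-functional on the plane kills $c(W_0)$. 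Translating both conditions through \eqref{eq:restriction} should again give linear equations in $\alpha,\beta$.

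Here the hypothesis $\N_{q^{12}/q^4}(b)\ne-1$ enters through \cref{lem:bad-alphabeta}. The map $z\mapsto z+(zb)^{q^8}$ is then invertible, so the system can be normalised with $\beta$ determined up to an $\F_{q^4}$-scalar $\nu$ times $\beta_0$. After eliminating, the leftover compatibility should become \eqref{eq:planecondition}, with $\eta=\beta_0^{q^{11}}$ and $\tau\in\F_{q^4}$ absorbing the $\F_q$-functional freedom. The converse follows by reading the construction backwards from $(\varepsilon,\nu,\tau)$.

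The main obstacle I expect is the bookkeeping in this first case. One has to identify precisely which linearised identities on $W_0$ encode ``image of dimension at most three inside the plane''. One also has to see why the unknowns collapse to the single equation \eqref{eq:planecondition}, rather than a larger system. I would first try to coordinatise the plane with the $\F_q$-basis $1,\kappa,\varepsilon,\kappa\varepsilon$ and compute directly with $c$ and $\beta_0$.
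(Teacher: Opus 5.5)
Your full-span case is essentially the paper's \cref{lem:bad-fullspan}. The only adjustment there is that the single parity identity $\rho(\Tr_{q^{12}/q^6}(\beta g_b(x)))=\Tr_{q^{12}/q^6}(\alpha x)$ on $W_0$ already gives both the formula for $\alpha$ and $\Lambda_{u,b}(\beta^{q^3})=0$; the other parity is redundant by $\rho^2=1$.

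The gap is in the plane case, where the roles of $\varepsilon$ and $\tau$ are misassigned.

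\begin{itemize}
\item All $\F_{q^2}$-planes of $\F_{q^6}$ are proportional (\cref{lem:trace-planes}). After rescaling $(\alpha,\beta)$ the plane can therefore be fixed once and for all; the paper takes $\ker\Tr_{q^6/q^2}$. Your plane-defining element is a dummy that drops out of the computation, so it is not the $\varepsilon$ of \eqref{eq:planecondition}.
\item For the rescaled pair $(\alpha',\beta')$, the plane condition splits by parity into $\alpha'\in\F_{q^4}$ and $\beta'=f\beta_0$ with $f\in\F_{q^4}$.
\item The $\varepsilon$ of \eqref{eq:planecondition} is your $\F_q$-functional. The annihilator $\Theta$ of the image under $\Tr_{q^6/q}$ contains $\F_{q^2}$ and has dimension $6-\dim_{\F_q}c_{\alpha',\beta'}(W_0)$. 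Hence the rank is at most three if and only if $\Theta$ contains some $\varepsilon\notin\F_{q^2}$.
\item Moving Frobenius powers onto the coefficients turns $\Tr_{q^6/q}(\varepsilon c_{\alpha',\beta'}(x))=0$ on $W_0$ into $\Tr_{q^{12}/q}(Ax)=0$, with $A=\alpha'\varepsilon+(\beta')^{q^{11}}\varepsilon^{q^5}+(\beta'b)^{q^7}\varepsilon^q$. The $\Tr_{q^{12}/q}$-annihilator of $W_0$ is $\F_{q^4}$, so this says $A\in\F_{q^4}$.
\item Using $(\beta_0b)^{q^7}=1-\eta$ gives \eqref{eq:planecondition} with $\nu=f^{q^3}/\alpha'$ and $\tau=A/\alpha'$. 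Thus $\tau$ comes from the annihilator of $W_0$, not from the functional.
\end{itemize}

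Consequently, ``the plane is unchanged under $\varepsilon\mapsto\kappa\varepsilon+\mu$'' does not prove the representatives clause. What is needed is the identity $Y(\kappa\varepsilon+\mu)=\kappa^qY(\varepsilon)+\mu^q$ for $Y(\varepsilon)=\varepsilon^q+\eta(\varepsilon^{q^5}-\varepsilon^q)$. It shows that \eqref{eq:planecondition} survives $\varepsilon\mapsto\kappa\varepsilon+\mu$ together with $\nu\mapsto\kappa^{1-q}\nu$. This reflects rescaling by $\F_{q^2}^*$, which fixes the normalised plane but moves $\Theta$.

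A second missing ingredient is the exclusion of pairs with $\alpha\beta=0$. Dividing by $\alpha'$ and obtaining $\nu\in\F_{q^4}^*$ both require $\alpha'f\ne0$.

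\begin{itemize}
\item The case $f=0$ is harmless: $\alpha'\varepsilon\in\F_{q^4}$ would force $\varepsilon\in\F_{q^4}\cap\F_{q^6}=\F_{q^2}$.
\item The case $\alpha'=0$ only yields $Y(\varepsilon)\in\F_{q^4}$, and nothing in your outline excludes this.
\end{itemize}

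The paper rules it out with the second half of \cref{lem:bad-alphabeta}. The map $g_b$ is bijective and $q$-semilinear over $\F_{q^4}$, so $g_b(W_0)$ is an $\F_{q^4}$-plane and hence a scalar multiple of $W_0$. Moreover, $\Tr_{q^{12}/q^6}$ has rank at least four on every multiple of $W_0$, because $a\F_{q^6}\not\subseteq W_0$ for $a\ne0$. You use \cref{lem:bad-alphabeta} only for the invertibility of $z\mapsto z+(zb)^{q^8}$, so this step has to be added.
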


We prove \cref{thm:bad} at the end of this section, after three
lemmas describing the possible images $c_{\alpha,\beta}(W_0)$ of
dimension at most $3$. Note that $c_{\alpha,\beta}(W_0)$ has the same
dimension as $c_{\alpha/\gamma,\beta/\gamma}(W_0)$ for every
$\gamma\in\F_{q^6}^*$.

\begin{lemma}\label{lem:bad-alphabeta}
Let $b\in\F_{q^{12}}$ satisfy $\N_{q^{12}/q^4}(b)\ne-1$. Then the maps
$z\mapsto z+(zb)^{q^8}$ and $g_b$ are bijective on $\F_{q^{12}}$.
Moreover, $\dim_{\F_q}c_{\alpha,\beta}(W_0)\ge4$ for every pair
$(\alpha,\beta)\ne(0,0)$ with $\alpha\beta=0$.
\end{lemma}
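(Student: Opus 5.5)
The plan is to treat the two bijectivity claims with \cref{lem:dickson} over $\F_{q^4}$. Then the rank claim becomes a dimension count for the intersection of an $\F_{q^6}$-line with an $\F_{q^4}$-line in $\F_{q^{12}}$, obtained through the trace-adjoint.

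\emph{Bijectivity.} The map $h(z)=z+(zb)^{q^8}=z+b^{q^8}z^{q^8}$ is $\F_{q^4}$-linear on $\F_{q^{12}}$, since $q^8$-th powers fix $\F_{q^4}$. Apply \cref{lem:dickson} with $m=4$, $a_0=1$, $a_1=0$, $a_2=b^{q^8}$. The trace term vanishes, so the determinant is $1+\N_{q^{12}/q^4}(b)\neq0$. For $g_b$, substitute $y=x^q$, a bijection, to get $g_b(x)=y+by^{q^4}$. This is again $\F_{q^4}$-linear, and \cref{lem:dickson} with $a_1=b$ gives determinant $1+\N_{q^{12}/q^4}(b)\ne0$. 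In particular $g_b$ is injective, hence bijective.

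\emph{Rank claim.} Let $\phi$ be $x\mapsto\Tr_{q^{12}/q^6}(\alpha x)$ when $\beta=0$, and $x\mapsto\Tr_{q^{12}/q^6}(\beta g_b(x))$ when $\alpha=0$. In both cases $\phi$ is onto $\F_{q^6}$: it is the composition of a bijection (the identity or $g_b$) with a nonzero $\F_{q^6}$-linear functional. Hence $V=\ker\phi$ has $\F_q$-dimension $6$, and $\dim_{\F_q}\phi(W_0)=8-\dim(V\cap W_0)$. Since $\dim(V\cap W_0)=14-\dim(V+W_0)$, it suffices to prove $\dim(V^\perp\cap W_0^\perp)\le2$, where complements are taken for $\Tr_{q^{12}/q}(xy)$.

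The complement $W_0^\perp$ is $\F_{q^4}$, because $W_0=\ker\Tr_{q^{12}/q^4}$. For $V$, write $V=f^{-1}(L)$ with $f$ bijective and $L=\gamma\ker\Tr_{q^{12}/q^6}$ an $\F_{q^6}$-line. Then $V^\perp=f^*(L^\perp)$, and $L^\perp$ is again an $\F_{q^6}$-line $\delta\F_{q^6}$. In the case $\beta=0$ we have $f=\mathrm{id}$, so $V^\perp\cap\F_{q^4}=\delta\F_{q^6}\cap\F_{q^4}$. In the case $\alpha=0$ we have $f=g_b$, with adjoint $g_b^*(y)=y^{q^{11}}+(by)^{q^7}$. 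Suppose $g_b^*(y)=t\in\F_{q^4}$. Raising to the $q$-th power gives $h(y)=y+(by)^{q^8}=t^q$. Since $h$ is $\F_{q^4}$-linear and bijective, $y=t^q\beta_0$ with $\beta_0=h^{-1}(1)$. Therefore $V^\perp\cap\F_{q^4}=g_b^*\bigl(\delta\F_{q^6}\cap\F_{q^4}\beta_0\bigr)$, and since $g_b^*$ is injective it has the same dimension as $\delta\F_{q^6}\cap\F_{q^4}\beta_0$.

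\emph{Key step.} It remains to bound an intersection $X=\delta\F_{q^6}\cap\epsilon\F_{q^4}$ of an $\F_{q^6}$-line and an $\F_{q^4}$-line, showing $\dim_{\F_q}X\le2$. The set $X$ is an $\F_{q^2}$-subspace. If it contained two $\F_{q^2}$-independent elements $a,a'$, then $a'/a$ would lie in both $\F_{q^6}$ and $\F_{q^4}$, hence in $\F_{q^2}$, a contradiction. So $\dim_{\F_q}X\le2$, which gives $\dim(V\cap W_0)\le4$ and $\dim_{\F_q}c_{\alpha,\beta}(W_0)\ge4$.

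The main obstacle I expect is setting up the adjoint computation correctly: checking that the adjoint of $g_b$ for $\Tr_{q^{12}/q}$ is as stated, and that the perp of an $\F_{q^6}$-line is an $\F_{q^6}$-line. After the $q$-power the relation $h(y)=t^q$ holds exactly, so the bijectivity of $h$ from the first part is what makes the case $\alpha=0$ reduce to the same intersection count as the case $\beta=0$.
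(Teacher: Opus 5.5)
Your proof is correct, but it follows a different route from the paper's, especially in the rank claim.

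\textbf{Bijectivity.} The paper does not use \cref{lem:dickson} here. It takes norms directly: a nonzero kernel element of $z\mapsto z+(zb)^{q^8}$, or a nonzero root of $g_b$ (through $(x^{q^4-1})^q=-b^{-1}$), would force $\N_{q^{12}/q^4}(b)=-1$. Your Dickson determinant $1+\N_{q^{12}/q^4}(b)$, after the substitution $y=x^q$, is just as short.

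\textbf{Rank claim, paper's route.} The paper stays on the primal side. Because $g_b$ is $q$-semilinear over $\F_{q^4}$, $g_b(W_0)$ is an $\F_{q^4}$-plane, hence a scalar multiple of $W_0$ by \cref{lem:trace-planes}. Both cases therefore reduce to bounding $\ker\Tr_{q^{12}/q^6}\cap\lambda W_0$ by $2$ over $\F_{q^2}$. The paper gets this by showing that no $\F_{q^6}$-line lies inside a multiple of $W_0$, using a degree argument for a linearised polynomial on $\F_{q^6}$.

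\textbf{Rank claim, your route.} You pass to orthogonal complements for $\Tr_{q^{12}/q}$. There the same statement becomes the intersection of an $\F_{q^6}$-line with an $\F_{q^4}$-line, and the bound is immediate from $\F_{q^6}\cap\F_{q^4}=\F_{q^2}$.

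\textbf{Trade-off.} Your route costs the adjoint bookkeeping: $V^\perp=f^*(L^\perp)$, the formula for $g_b^*$, and the preimage $(g_b^*)^{-1}(\F_{q^4})=\F_{q^4}\beta_0$. It also makes the bijectivity of $z\mapsto z+(zb)^{q^8}$ an essential input to the rank argument, whereas the paper needs it only to define $\beta_0$. In return, your computation gives the scalar explicitly: $g_b(W_0)^\perp=\beta_0\F_{q^4}$, that is, $g_b(W_0)=\beta_0^{-1}W_0$. This is the same role $\beta_0$ plays later in \cref{lem:bad-plane}.
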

\begin{proof}
The $\F_{q^4}$-linear map
$z\mapsto z+(zb)^{q^8}$ is bijective: a nonzero kernel
element would satisfy $z=-(zb)^{q^8}$, and taking norms to $\F_{q^4}$
and cancelling $\N_{q^{12}/q^4}(z)$ would give
$1=-\N_{q^{12}/q^4}(b)$, contrary to the hypothesis.
In particular, $\beta_0$ in \cref{thm:bad} is well defined and nonzero.

The map $g_b$ is bijective on $\F_{q^{12}}$. For $b\ne0$, a nonzero root would
give $(x^{q^4-1})^q=-b^{-1}$, and taking the norm to $\F_{q^4}$
would give $\N_{q^{12}/q^4}(b)=-1$. The case $b=0$ is immediate.
Since $g_b$ is $q$-semilinear over $\F_{q^4}$, its image of $W_0$
is an $\F_{q^4}$-plane. By \cref{lem:trace-planes}, every such plane is a scalar multiple
of $W_0$.

The kernel of $\Tr_{q^{12}/q^6}$ is an $\F_{q^6}$-line. Its intersection
with any scalar multiple of $W_0$ has dimension at most $2$ over
$\F_{q^2}$. Otherwise the entire $\F_{q^6}$-line would be contained in
that multiple of $W_0$. After rescaling, this would say that
$a\F_{q^6}\subseteq W_0$ for some $a\ne0$, whereas the nonzero polynomial
\[
 e\longmapsto ae+a^{q^4}e^{q^4}+a^{q^8}e^{q^2}
\]
cannot vanish at all $q^6$ elements of $\F_{q^6}$, since its degree is $q^4<q^6$.
Thus the trace has rank at least $8-4=4$ on every scalar multiple
of $W_0$. Apply this to $\alpha W_0$ when $\beta=0$, and to
$\beta g_b(W_0)$ when $\alpha=0$.
\end{proof}

\begin{lemma}\label{lem:bad-plane}
Let $b$, $\beta_0$ and $\eta$ be as in \cref{thm:bad}. There is a
pair $(\alpha,\beta)$ with $\alpha\beta\ne0$ such that
$c_{\alpha,\beta}(W_0)$ has dimension at most $3$ over $\F_q$ and is
contained in an $\F_{q^2}$-plane if and only if condition~(1) of
\cref{thm:bad} holds. In condition~(1), $\varepsilon$ can be
restricted to any set of representatives as in \cref{thm:bad}.
\end{lemma}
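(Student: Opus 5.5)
We rescale first, then split the trace by parity, and finally identify the dimension condition with \eqref{eq:planecondition}. Suppose $c_{\alpha,\beta}(W_0)$ lies in an $\F_{q^2}$-plane $P\le\F_{q^6}$. By \cref{lem:trace-planes} with $m=2$, $P=\gamma\ker\Tr_{q^6/q^2}$ for some $\gamma\in\F_{q^6}^*$. Replacing $(\alpha,\beta)$ by $(\alpha/\gamma,\beta/\gamma)$ does not change the dimension of the image, so we may assume $P=\ker\Tr_{q^6/q^2}$. The containment then says that the linearised polynomial
\[
 \Tr_{q^{12}/q^2}(\alpha x+\beta x^q+\beta bx^{q^5})
\]
vanishes on $W_0$. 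Its even-exponent part comes only from $\alpha x$, and its odd-exponent part only from the $\beta$-terms. By the parity remark after \eqref{eq:restriction}, each part vanishes on $W_0$ separately.

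\emph{Even part.} The coefficients of $x^{q^0},x^{q^4},x^{q^8}$ are $\alpha,\alpha^{q^4},\alpha^{q^8}$. By \eqref{eq:restriction} they must be equal, so $\alpha\in\F_{q^4}^*$, and the other residue classes give nothing new.

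\emph{Odd part.} The coefficient of $x^{q^{2k+1}}$ is $\beta^{q^{2k}}+(\beta b)^{q^{2k+8}}$, up to the index shift coming from $x^{q^5}=(x^q)^{q^4}$. Imposing \eqref{eq:restriction} on these coefficients should reduce to $\beta+(\beta b)^{q^8}\in\F_{q^4}$. By \cref{lem:bad-alphabeta} the map $z\mapsto z+(zb)^{q^8}$ is an $\F_{q^4}$-linear bijection, so this is equivalent to $\beta=\lambda\beta_0$ with $\lambda\in\F_{q^4}$. Since $\alpha\beta\ne0$, we get $\lambda\ne0$.

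With $\alpha$ and $\beta$ normalised this way, I compute $c_{\alpha,\beta}$ explicitly on $W_0$. Writing $x\in W_0$ and using $\eta=\beta_0^{q^{11}}$, the image should be an $\F_q$-subspace of the four-dimensional space $\ker\Tr_{q^6/q^2}$. Its dimension is at most $3$ exactly when some nonzero $\F_q$-functional on $\ker\Tr_{q^6/q^2}$ annihilates it. I parametrise such functionals via the trace pairing $\Tr_{q^6/q}(\theta\,\cdot)$, with $\theta$ determined only modulo $\F_{q^2}$, since $\F_{q^2}$ annihilates $\ker\Tr_{q^6/q^2}$. This is where $\varepsilon\in\F_{q^6}\setminus\F_{q^2}$ enters, as a class in $\F_{q^6}/\F_{q^2}$. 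Moving the functional across $c_{\alpha,\beta}$ by the adjoint, the annihilation condition becomes a linearised identity on $W_0$. Applying \eqref{eq:restriction} once more should turn it into an element-wise equation, namely \eqref{eq:planecondition}, with $\nu$ essentially $\lambda/\alpha\in\F_{q^4}^*$ and $\tau\in\F_{q^4}$ absorbing the $\F_{q^4}$-component forced by the restriction relations. The argument reverses: from $\varepsilon,\nu,\tau$ satisfying \eqref{eq:planecondition} one reads off $\alpha,\beta$ with $\alpha\beta\ne0$ and an image of dimension at most $3$ inside $\ker\Tr_{q^6/q^2}$.

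For the final clause, I check directly that replacing $\varepsilon$ by $\kappa\varepsilon+\mu$, with $\kappa\in\F_{q^2}^*$ and $\mu\in\F_{q^2}$, preserves solvability of \eqref{eq:planecondition}. The term $\varepsilon^{q^5}-\varepsilon^q$ is $\F_{q^2}$-semilinear up to Frobenius, and constants in $\F_{q^2}$ are killed or absorbed into $\tau$, while $\nu$ changes by a factor from $\F_{q^2}^*\subseteq\F_{q^4}^*$. So only the point of $\mathrm{PG}(\F_{q^6}/\F_{q^2})$ matters.

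The main obstacle is the third step: computing the adjoint of $c_{\alpha,\beta}$ restricted to $W_0$ and checking that the resulting identity collapses exactly to the single equation \eqref{eq:planecondition}. I will do this coefficient-by-coefficient modulo $12$, tracking which residues modulo $4$ each term lands in. If that becomes unwieldy, my fallback is to parametrise $W_0$ as the image of $y\mapsto y-y^{q^4}$ and work with a polynomial in $y$ without restrictions.
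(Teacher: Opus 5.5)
Your proposal is correct and follows essentially the same route as the paper's proof. You rescale into $\ker\Tr_{q^6/q^2}$, split by parity to get $\alpha\in\F_{q^4}^*$ and $\beta=\lambda\beta_0$, detect $\dim_{\F_q}c_{\alpha,\beta}(W_0)\le3$ through an annihilating $\varepsilon\in\F_{q^6}\setminus\F_{q^2}$ together with the fact that the annihilator of $W_0$ under $\Tr_{q^{12}/q}$ is $\F_{q^4}$, and use the same $\kappa\varepsilon+\mu$ check for representatives. The step you flag as the main obstacle is immediate from trace invariance, $\Tr_{q^6/q}(\varepsilon c_{\alpha,\beta}(x))=\Tr_{q^{12}/q}\bigl((\alpha\varepsilon+\beta^{q^{11}}\varepsilon^{q^5}+(\beta b)^{q^7}\varepsilon^q)x\bigr)$, and with $(\beta_0b)^{q^7}=1-\eta$ it yields \eqref{eq:planecondition} with $\nu=\lambda^{q^3}/\alpha$ rather than $\lambda/\alpha$, a harmless difference.
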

\begin{proof}
Suppose that $\alpha\beta\ne0$, that
$\dim_{\F_q}c_{\alpha,\beta}(W_0)\le3$, and that
$c_{\alpha,\beta}(W_0)$ is contained in an
$\F_{q^2}$-plane. By \cref{lem:trace-planes}, every such plane is a scalar multiple
of $\ker\Tr_{q^6/q^2}$. Thus there is $\gamma\in\F_{q^6}^*$ such that,
setting $\alpha'=\alpha/\gamma$ and $\beta'=\beta/\gamma$, we have
\[
 c_{\alpha',\beta'}(W_0)
 =\gamma^{-1}c_{\alpha,\beta}(W_0)
 \subseteq\ker\Tr_{q^6/q^2}.
\]
Equivalently, for every $x\in W_0$,
\[
\begin{aligned}
 0&=\Tr_{q^6/q^2}\bigl(c_{\alpha',\beta'}(x)\bigr)\\
  &=\Tr_{q^{12}/q^2}(\alpha'x)
    +\Tr_{q^{12}/q^2}(\beta'x^q+\beta'bx^{q^5}).
\end{aligned}
\]
By \eqref{eq:restriction}, the even and odd parts must vanish
separately on $W_0$. For $a\in\F_{q^{12}}$, the polynomial
$\Tr_{q^{12}/q^2}(ax)=\sum_{j=0}^5a^{q^{2j}}x^{q^{2j}}$
vanishes on $W_0$ if and only if $a=a^{q^4}$, that is,
$a\in\F_{q^4}$. Also, invariance of the trace under the $q^8$-th
power gives
\[
 \Tr_{q^{12}/q^2}(\beta'x^q+\beta'bx^{q^5})
 =\Tr_{q^{12}/q^2}\bigl((\beta'+(\beta'b)^{q^8})x^q\bigr).
\]
Since $x\mapsto x^q$ permutes $W_0$, the two vanishing conditions
are therefore equivalent to
\begin{equation}\label{eq:plane-alpha-beta}
 \alpha'\in\F_{q^4},\qquad
 \beta'+(\beta'b)^{q^8}\in\F_{q^4}.
\end{equation}

The bijectivity and $\F_{q^4}$-linearity in \cref{lem:bad-alphabeta}
show that the second condition holds if and only if
$\beta'=f\beta_0$ for some $f\in\F_{q^4}$.

The space $\ker\Tr_{q^6/q^2}$ has dimension $4$ over $\F_q$, whereas $c_{\alpha',\beta'}(W_0)$ has
dimension at most $3$. We show that there exists
$\varepsilon\in\F_{q^6}\setminus\F_{q^2}$ such that
\[
 \Tr_{q^6/q}\bigl(\varepsilon c_{\alpha',\beta'}(x)\bigr)=0
 \qquad\text{for every }x\in W_0.
\]

Let
\[
 \Theta=\bigl\{\varepsilon\in\F_{q^6}:
   \Tr_{q^6/q}(\varepsilon c_{\alpha',\beta'}(x))=0
   \text{ for every }x\in W_0\bigr\}.
\]
By nondegeneracy of the trace pairing, its dimension over $\F_q$ is
$6-\dim_{\F_q}c_{\alpha',\beta'}(W_0)$.
Moreover, $\F_{q^2}\subseteq\Theta$, since for
$\varepsilon\in\F_{q^2}$ and $x\in W_0$,
\[
 \Tr_{q^6/q}(\varepsilon c_{\alpha',\beta'}(x))
 =\Tr_{q^2/q}\bigl(\varepsilon
       \Tr_{q^6/q^2}(c_{\alpha',\beta'}(x))\bigr)=0.
\]
Consequently, $\dim_{\F_q}c_{\alpha',\beta'}(W_0)\le3$ if and only if
$\Theta$ contains an element outside $\F_{q^2}$.
For such an $\varepsilon$,
\[
\begin{aligned}
 \Tr_{q^6/q}\bigl(\varepsilon c_{\alpha',\beta'}(x)\bigr)
 &=\Tr_{q^{12}/q}\bigl(
      \alpha'\varepsilon x+\beta'\varepsilon x^q
        +\beta'b\varepsilon x^{q^5}\bigr)\\
 &=\Tr_{q^{12}/q}\Bigl(
     \bigl(\alpha'\varepsilon
       +(\beta')^{q^{11}}\varepsilon^{q^5}
       +(\beta'b)^{q^7}\varepsilon^q\bigr)x\Bigr).
\end{aligned}
\]

The annihilator of $W_0$ under the pairing
$(a,x)\mapsto\Tr_{q^{12}/q}(ax)$ is $\F_{q^4}$.
Indeed, $\F_{q^4}$ is contained in this annihilator by transitivity
of the trace, and both have dimension four over $\F_q$.
Consequently,
\[
 \alpha'\varepsilon+(\beta')^{q^{11}}\varepsilon^{q^5}
       +(\beta'b)^{q^7}\varepsilon^q\in\F_{q^4}.
\]
Substitute $\beta'=f\beta_0$. Since $f\in\F_{q^4}$,
we have $f^{q^{11}}=f^{q^7}=f^{q^3}$. Moreover, raising
$\beta_0+(\beta_0b)^{q^8}=1$ to the $q^{11}$-th power gives
$(\beta_0b)^{q^7}=1-\eta$. The preceding condition reads
\[
 \alpha'\varepsilon+f^{q^3}
   \bigl(\varepsilon^q+\eta(\varepsilon^{q^5}-\varepsilon^q)\bigr)
   \in\F_{q^4}.
\]
Here $\alpha'f\ne0$ because $\alpha\beta\ne0$.
Dividing by $\alpha'\in\F_{q^4}^*$ gives
\eqref{eq:planecondition}, with $\nu=f^{q^3}/\alpha'$.
Conversely, given \eqref{eq:planecondition}, choose
$f\in\F_{q^4}^*$ with $f^{q^3}=\nu$ and take
$\alpha'=1$, $\beta'=f\beta_0$. Then
$c_{1,f\beta_0}(W_0)\subseteq\ker\Tr_{q^6/q^2}$, and $\Tr_{q^6/q}(\varepsilon c_{\alpha',\beta'}(x))$ vanishes for that
$\varepsilon\notin\F_{q^2}$. Hence $\Theta$ has dimension at least
$3$, and by the dimension argument above,
$\dim_{\F_q}c_{1,f\beta_0}(W_0)\le3$. Since $f\beta_0\ne0$, the pair
$(1,f\beta_0)$ has the required properties.

For the assertion about representatives, put
$Y(\varepsilon)=\varepsilon^q+\eta(\varepsilon^{q^5}-\varepsilon^q)$.
If $\kappa\in\F_{q^2}^*$ and $\mu\in\F_{q^2}$, then
\[
 Y(\kappa\varepsilon+\mu)=\kappa^qY(\varepsilon)+\mu^q.
\]
If \eqref{eq:planecondition} holds, then
\[
 (\kappa\varepsilon+\mu)+\kappa^{1-q}\nu Y(\kappa\varepsilon+\mu)
 =\kappa\tau+\mu+\kappa^{1-q}\nu\mu^q\in\F_{q^4}.
\]
Thus replacing $\varepsilon$ by $\kappa\varepsilon+\mu$ and $\nu$ by
$\kappa^{1-q}\nu$ preserves the condition, and the change is
invertible. These sets are the points of the projective line
$\mathrm{PG}(\F_{q^6}/\F_{q^2})$, so there are $q^2+1$ of them.
\end{proof}

Now let $V\le\F_{q^6}$ be a three-dimensional $\F_q$-subspace with
$\F_{q^2}V=\F_{q^6}$. By \cref{lem:fixed-spaces}, $V$ is the fixed
space of a unique $q$-semilinear involution $\rho$ over $\F_{q^2}$.
The map $u(z)=\rho(z^{q^3})$ is $\F_{q^2}$-linear, since
$u(\lambda z)=\rho(\lambda^qz^{q^3})=\lambda u(z)$ for
$\lambda\in\F_{q^2}$. Write
$u(z)=u_0z+u_1z^{q^2}+u_2z^{q^4}$. Then $\rho=\rho_u$,
and $\rho^2=1$ gives \eqref{eq:involution}.
Conversely, coefficients satisfying \eqref{eq:involution}
define an involution whose fixed space has dimension three
and spans $\F_{q^6}$ over $\F_{q^2}$, by the same lemma.

\begin{lemma}\label{lem:bad-fullspan}
Let $u_0,u_1,u_2\in\F_{q^6}$ satisfy \eqref{eq:involution}, and let
$b\in\F_{q^{12}}$. The map $(\alpha,\beta)\mapsto\beta^{q^3}$ is a
bijection from the set of pairs $(\alpha,\beta)$ with
$c_{\alpha,\beta}(W_0)\subseteq\Fix\rho_u$ onto $\ker\Lambda_{u,b}$.
In particular, a nonzero such pair exists if and only if
$\Delta_b(u)=0$.
\end{lemma}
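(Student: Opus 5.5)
We plan to unpack the condition $c_{\alpha,\beta}(W_0)\subseteq\Fix\rho_u$ into coefficient identities for a linearised polynomial, using the restriction criterion \eqref{eq:restriction} and the parity splitting. We then identify the surviving condition with $\Lambda_{u,b}(\beta^{q^3})=0$, and finish with \cref{lem:dickson}.

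First we expand $c_{\alpha,\beta}(x)=A(x)+B(x)$. Here
\[
 A(x)=\alpha x+\alpha^{q^6}x^{q^6}
\]
is the even-exponent part, and
\[
 B(x)=\beta x^q+\beta^{q^6}x^{q^7}+\beta b x^{q^5}+(\beta b)^{q^6}x^{q^{11}}
\]
is the odd-exponent part. The map $\rho_u(y)=u_2y^q+u_0y^{q^3}+u_1y^{q^5}$ only uses odd Frobenius shifts, so it swaps parities. Hence $\rho_u(c(x))-c(x)$ splits into an even part $\rho_u(B(x))-A(x)$ and an odd part $\rho_u(A(x))-B(x)$. By the remark after \eqref{eq:restriction}, $c(W_0)\subseteq\Fix\rho_u$ holds if and only if both parts vanish on $W_0$. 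If $\rho_u\circ B=A$ on $W_0$, then applying $\rho_u$ and using $\rho_u^2=1$ (equations \eqref{eq:involution}) gives $B=\rho_u\circ A$ on $W_0$. So the whole condition is the single requirement that
\[
 \rho_u(B(x))-\alpha x-\alpha^{q^6}x^{q^6}
\]
vanishes on $W_0$.

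Next we compose. We write $\rho_u(B(x))$ as $\sum_{i \text{ even}}d_ix^{q^i}$, where each $d_i$ is an explicit expression in $u_0,u_1,u_2$ and Frobenius images of $\beta$ and $\beta b$. By \eqref{eq:restriction}, vanishing on $W_0$ means the coefficients are constant on the two classes $\{0,4,8\}$ and $\{2,6,10\}$ of even exponents. The class $\{0,4,8\}$ yields $\alpha=d_0-d_4$ together with the constraint $d_4=d_8$. The class $\{2,6,10\}$ yields $\alpha^{q^6}=d_6-d_2$ together with $d_2=d_{10}$. Thus $\alpha$ is uniquely determined by $\beta$, and $(\alpha,\beta)\mapsto\beta^{q^3}$ is injective on the solution set. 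The remaining constraints should be conjugates of one another under suitable Frobenius powers, since $u_i\in\F_{q^6}$. After raising to an appropriate power and putting $y=\beta^{q^3}$, they should collapse to the single equation
\[
 u_0(1-b^{q^3})y-u_2y^{q^4}+u_1b^{q^{11}}y^{q^8}=0,
\]
that is, $\Lambda_{u,b}(y)=0$. The same manipulation shows that the compatibility $\alpha^{q^6}=(d_0-d_4)^{q^6}$ is automatic. For surjectivity, we start from $y\in\ker\Lambda_{u,b}$, set $\beta=y^{q^9}$ and define $\alpha$ by the formula above, then reverse the steps.

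This bookkeeping is the main obstacle. The difficulty is tracking which of the $d_i$ equal which conjugates, and checking that the $\alpha$-compatibility and the two class constraints reduce to one $\F_{q^4}$-linear equation rather than several independent ones. I would first write the six even coefficients explicitly in a table, and then apply $q^6$- and $q^4$-powers to see the coincidences.

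Finally, $\Lambda_{u,b}$ is $\F_{q^4}$-linear on $\F_{q^{12}}$, with coefficients $a_0=u_0(1-b^{q^3})$, $a_1=-u_2$ and $a_2=u_1b^{q^{11}}$. Apply \cref{lem:dickson} with $m=4$. Since $u_i\in\F_{q^6}$, we have $\N_{q^{12}/q^4}(u_i)=u_iu_i^{q^4}u_i^{q^8}=\N_{q^6/q^2}(u_i)$. The trace term is
\[
 -\Tr_{q^{12}/q^4}(a_0a_1^{q^4}a_2^{q^8})=\Tr_{q^{12}/q^4}\bigl((1-b^{q^3})b^{q^7}u_0u_1^{q^2}u_2^{q^4}\bigr),
\]
using $u_1^{q^8}=u_1^{q^2}$ and $b^{q^{19}}=b^{q^7}$. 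The sign of $\N(-u_2)=-\N_{q^6/q^2}(u_2)$ matches as well, so the Dickson determinant is exactly $\Delta_b(u)$. Hence $\ker\Lambda_{u,b}\ne0$ if and only if $\Delta_b(u)=0$. By the bijection, this holds if and only if a nonzero pair exists.
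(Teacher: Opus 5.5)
Your proposal is correct and follows the paper's proof essentially step by step. The steps match: the parity splitting, where $\rho_u^2=1$ reduces everything to the single identity $\rho_u\circ B=A$ on $W_0$; the coefficient comparison via \eqref{eq:restriction}; and Dickson's criterion with $m=4$, including the matching signs and Frobenius reductions. The bookkeeping you flag closes as you predicted: since $u_i\in\F_{q^6}$, one has $d_6=d_0^{q^6}$, $d_{10}=d_4^{q^6}$ and $d_2=d_8^{q^6}$, so the class $\{2,6,10\}$ is just the $q^6$-th power of the class $\{0,4,8\}$, and $d_4=d_8$ is exactly $\Lambda_{u,b}(\beta^{q^3})=0$, which is what the paper records.
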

\begin{proof}
Put $\rho=\rho_u$. The condition
$c_{\alpha,\beta}(W_0)\subseteq\Fix\rho$ means that
\[
 \rho\bigl(c_{\alpha,\beta}(x)\bigr)=c_{\alpha,\beta}(x)
 \qquad\text{for every }x\in W_0.
\]
Recall that
\[
 c_{\alpha,\beta}(x)
 =\Tr_{q^{12}/q^6}(\alpha x)
  +\Tr_{q^{12}/q^6}(\beta g_b(x)).
\]
The first summand involves $x$ and $x^{q^6}$; the second involves
$x^q$, $x^{q^5}$, $x^{q^7}$ and $x^{q^{11}}$.
Composition with
$\rho_u(z)=u_2z^q+u_0z^{q^3}+u_1z^{q^5}$ interchanges even and odd
Frobenius indices. By \eqref{eq:restriction}, the even and odd parts
of an equality on $W_0$ must hold separately, so the condition is
equivalent to the two identities
\[
\begin{aligned}
 \rho\bigl(\Tr_{q^{12}/q^6}(\beta g_b(x))\bigr)
     &=\Tr_{q^{12}/q^6}(\alpha x),\\
 \rho\bigl(\Tr_{q^{12}/q^6}(\alpha x)\bigr)
     &=\Tr_{q^{12}/q^6}(\beta g_b(x))
 \qquad(x\in W_0).
\end{aligned}
\]
The second follows from the first by applying $\rho$ and using
$\rho^2=1$, so it suffices to impose the first identity.
Expanding it and applying \eqref{eq:restriction} gives
\[
\begin{aligned}
 u_2(\beta b)^{q^7}+u_1\beta^{q^{11}}-\alpha
 &=u_0\beta^{q^3}+u_1(\beta b)^{q^{11}}\\
 &=u_2\beta^{q^7}+u_0(\beta b)^{q^3}.
\end{aligned}
\]
The other coefficient triple is the $q^6$-th power of these
equations, so the condition is
\begin{equation}\label{eq:alpha-Lambda}
\begin{gathered}
 \Lambda_{u,b}(\beta^{q^3})=0,\\
 \alpha=u_2(\beta b)^{q^7}+u_1\beta^{q^{11}}
       -u_0\beta^{q^3}-u_1(\beta b)^{q^{11}}.
\end{gathered}
\end{equation}

For fixed $u$, these equations give a bijection between
$\ker\Lambda_{u,b}$ and the pairs $(\alpha,\beta)$ with
$c_{\alpha,\beta}(W_0)\subseteq\Fix\rho_u$: recover $\beta$ uniquely
from $\beta^{q^3}$ and then use the displayed formula for $\alpha$.
The zero kernel vector gives only $(\alpha,\beta)=(0,0)$.
Hence a nonzero pair exists if and only if $\Lambda_{u,b}$ is
singular. Applying \cref{lem:dickson} to its three coefficients
\[
 u_0(1-b^{q^3}),\qquad -u_2,\qquad u_1b^{q^{11}}
\]
gives $\Delta_b(u)$ in \eqref{eq:Delta}; here
$\N_{q^{12}/q^4}(u_i)=\N_{q^6/q^2}(u_i)$ and
$u_i^{q^8}=u_i^{q^2}$.
\end{proof}

\begin{proof}[Proof of \cref{thm:bad}]
By \cref{lem:bad-alphabeta}, $\beta_0$ is well defined, and every
pair $(\alpha,\beta)\ne(0,0)$ with
$\dim_{\F_q}c_{\alpha,\beta}(W_0)\le3$ satisfies $\alpha\beta\ne0$.
If the $\F_{q^2}$-span of $c_{\alpha,\beta}(W_0)$ has dimension at
most $2$, then $c_{\alpha,\beta}(W_0)$ is contained in an
$\F_{q^2}$-plane, and \cref{lem:bad-plane} gives condition~(1).
Otherwise $c_{\alpha,\beta}(W_0)$ spans $\F_{q^6}$ over $\F_{q^2}$;
since its dimension over $\F_q$ is at most $3$, it is a
three-dimensional subspace $V$ with $\F_{q^2}V=\F_{q^6}$. Writing
$V=\Fix\rho_u$ as above, \cref{lem:bad-fullspan} gives
$\Delta_b(u)=0$, which is condition~(2).

Conversely, if condition~(1) holds, then $b$ is bad by
\cref{lem:bad-plane} and \eqref{eq:rankcriterion}. If $u$ satisfies
\eqref{eq:involution} and $\Delta_b(u)=0$, then
\cref{lem:bad-fullspan} gives a nonzero pair whose image is contained
in the three-dimensional space $\Fix\rho_u$, so $b$ is bad by
\eqref{eq:rankcriterion}. The assertion about representatives is
part of \cref{lem:bad-plane}.
\end{proof}

\section{Bounds on the bad parameters}\label{sec:count}

We first bound the parameters in condition~(1) of \cref{thm:bad}.
Fix one of the $q^2+1$ representatives $\varepsilon$ just described.
Because $\varepsilon\notin\F_{q^2}$,
$\varepsilon^{q^5}-\varepsilon^q\ne0$. Equation
\eqref{eq:planecondition} determines
\[
 \eta=\frac{\tau-\varepsilon-\nu\varepsilon^q}
                {\nu(\varepsilon^{q^5}-\varepsilon^q)}
\]
from $(\varepsilon,\nu,\tau)$. There are at most
$(q^2+1)(q^4-1)q^4$ choices. Finally, $\eta$ determines $b$:
$\beta_0=\eta^q\ne0$ and
\begin{equation}\label{eq:recoverb}
 b=\frac{(1-\eta)^{q^5}}{\eta^q}.
\end{equation}
Thus at most $(q^2+1)(q^4-1)q^4$ parameters with
$\N_{q^{12}/q^4}(b)\ne-1$ satisfy condition~(1) of \cref{thm:bad}.

For condition~(2) of \cref{thm:bad}, we use the three-dimensional subspaces $V\le \F_{q^6}$
such that $\F_{q^2}V=\F_{q^6}$. For such a $V$, write $u_V$ for the map
determined by the involution fixing $V$, as before \cref{lem:bad-fullspan}.
Let
\[
 \mathcal B_V=\{b\in \F_{q^{12}}:\ c_{\alpha,\beta}(W_0)\subseteq V
                    \text{ for some }(\alpha,\beta)\ne(0,0)\}.
\]
By \cref{lem:bad-fullspan}, for every $V$ and every $b\in\F_{q^{12}}$,
\begin{equation}\label{eq:BV}
 b\in\mathcal B_V\quad\Longleftrightarrow\quad\Delta_b(u_V)=0.
\end{equation}
We call two such subspaces $V,V'$ equivalent under scalar
multiplication if $V'=eV$ for some $e\in\F_{q^6}^*$, and we call
$\{eV:e\in\F_{q^6}^*\}$ the scalar class of $V$. Rescaling
$(\alpha,\beta)$ shows that $\mathcal B_{eV}=\mathcal B_V$, so
subspaces in the same scalar class exclude the same parameters.

\begin{lemma}\label{lem:orbit-moment}
There are $q^4+q$ scalar classes of three-dimensional
$\F_q$-subspaces $V\le\F_{q^6}$ satisfying $\F_{q^2}V=\F_{q^6}$.
\end{lemma}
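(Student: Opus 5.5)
The plan is to count the subspaces $V$ directly and then divide by the orbit sizes of the action of $\F_{q^6}^*$, handling the few subspaces with a larger stabiliser separately.

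\textbf{Step 1: counting the subspaces.} By \cref{lem:fixed-spaces}, the three-dimensional $\F_q$-subspaces $V$ with $\F_{q^2}V=\F_{q^6}$ are in bijection with the $q$-semilinear involutions of $\F_{q^6}$ over $\F_{q^2}$. Equivalently, they correspond to $\F_q$-forms of the three-dimensional $\F_{q^2}$-space $\F_{q^6}$. The group $\GL_3(q^2)$ acts transitively on these forms, and the stabiliser of one form is $\GL_3(q)$. So their number is
\[
 N=\frac{|\GL_3(q^2)|}{|\GL_3(q)|}=q^3(q^3+1)(q^2+1)(q+1).
\]

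\textbf{Step 2: stabilisers.} For such a $V$, the set $\{e\in\F_{q^6}:eV\subseteq V\}$ is a subfield $F$ of $\F_{q^6}$, and $V$ is an $F$-vector space. So $F\in\{\F_q,\F_{q^2},\F_{q^3},\F_{q^6}\}$.
\begin{itemize}
\item $F=\F_{q^6}$ is impossible, since $\dim_{\F_q}V=3$.
\item $F=\F_{q^2}$ is impossible, since the proof of \cref{lem:fixed-spaces} shows $V\cap\kappa V=\{0\}$ for $\kappa\in\F_{q^2}\setminus\F_q$.
\item $F=\F_{q^3}$ occurs exactly when $V=a\F_{q^3}$. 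These subspaces do satisfy $\F_{q^2}V=\F_{q^6}$, because $\F_{q^2}\F_{q^3}=\F_{q^6}$. There are $(q^6-1)/(q^3-1)=q^3+1$ of them, and they form a single scalar class.
\end{itemize}
Every other $V$ has stabiliser $\F_q^*$ in $\F_{q^6}^*$. Its orbit therefore has size $(q^6-1)/(q-1)$.

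\textbf{Step 3: the orbit count.} The number of scalar classes is
\[
 1+\frac{(N-q^3-1)(q-1)}{q^6-1}.
\]
Write $N-q^3-1=(q^3+1)\bigl(q^6+q^5+q^4+q^3-1\bigr)$. Then
\[
 \frac{(N-q^3-1)(q-1)}{q^6-1}=\frac{q^6+q^5+q^4+q^3-1}{q^2+q+1}=q^4+q-1,
\]
since $q^6+q^5+q^4+q^3-1=q^4(q^2+q+1)+(q-1)(q^2+q+1)$. Adding the exceptional class gives $q^4+q$ scalar classes.

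\textbf{Main obstacle.} The step needing most care is Step 1, specifically the transitivity claim. Any two $\F_q$-forms of $\F_{q^2}^3$ are related by an element of $\GL_3(q^2)$: map an $\F_q$-basis of one form to an $\F_q$-basis of the other. The stabiliser of a form is exactly $\GL_3(q)$ in that basis. If this feels too quick, the fallback is to count ordered $\F_q$-bases $(v_1,v_2,v_3)$ that are $\F_{q^2}$-independent, which gives $|\GL_3(q^2)|$, and divide by $|\GL_3(q)|$.
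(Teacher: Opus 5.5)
Your proposal is correct and follows essentially the same route as the paper: the count $|\GL_3(q^2)|/|\GL_3(q)|$ (your ordered-basis fallback is exactly the paper's argument), the analysis of the stabiliser subfield $\{e:eV\subseteq V\}$, which leaves only $\F_q$ or $\F_{q^3}$, and the orbit count $1+(N-q^3-1)(q-1)/(q^6-1)=q^4+q$. The differences are minor: you exclude $\F_{q^2}$ via $V\cap\kappa V=\{0\}$ rather than by the parity of $\dim_{\F_q}V$, you assert without proof that the stabiliser set is a subfield (the paper checks closure under inverses), and you verify explicitly that $a\F_{q^3}$ spans $\F_{q^6}$ over $\F_{q^2}$, which the paper leaves implicit.
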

\begin{proof}
There are $|\GL_3(q^2)|$ ordered $\F_{q^2}$-bases of $\F_{q^6}$.
The $\F_q$-span of each such basis is a three-dimensional subspace
spanning $\F_{q^6}$ over $\F_{q^2}$. Conversely, every ordered
$\F_q$-basis of such a subspace is an $\F_{q^2}$-basis of
$\F_{q^6}$. Each subspace has exactly $|\GL_3(q)|$ ordered
$\F_q$-bases. Hence the number of subspaces is
\[
 \frac{|\GL_3(q^2)|}{|\GL_3(q)|}
 =q^3(q^3+1)(q^3+q^2+q+1).
\]

For each $V$, the set $\{e\in\F_{q^6}:eV\subseteq V\}$ is a
subfield containing $\F_q$. Indeed, it is closed under addition
and multiplication; if $e\ne0$ belongs to it, then $eV=V$, so
$e^{-1}$ also belongs to it. Since $V$ is a vector space over this
subfield and has dimension $3$ over $\F_q$, the subfield cannot
contain $\F_{q^2}$; hence it is either $\F_q$ or $\F_{q^3}$.
The second case occurs if and only if $V=e\F_{q^3}$ for some
$e\in\F_{q^6}^*$; these subspaces form one scalar class with
$q^3+1$ members. For every other $V$, the equality $eV=fV$
holds if and only if $e/f\in\F_q^*$, so its scalar class has
$(q^6-1)/(q-1)$ members. The number of classes is therefore
\[
 1+\frac{q^3(q^3+1)(q^3+q^2+q+1)-(q^3+1)}
         {(q^6-1)/(q-1)}
 =q^4+q.
\]
\end{proof}

\begin{lemma}\label{lem:BV-size}
For every three-dimensional $\F_q$-subspace $V\le\F_{q^6}$ with
$\F_{q^2}V=\F_{q^6}$,
\[
 |\mathcal B_V|\le q^8+q^4+1.
\]
\end{lemma}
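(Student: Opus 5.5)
The plan is to count $\mathcal B_V$ through the kernel of $\Lambda_{u,b}$ rather than through the determinant $\Delta_b(u)$. Write $u=u_V$. By \eqref{eq:BV} and \cref{lem:bad-fullspan}, $b\in\mathcal B_V$ if and only if $\Lambda_{u,b}$ has a nonzero root $y\in\F_{q^{12}}$. Since $\Lambda_{u,b}$ only involves $y,y^{q^4},y^{q^8}$, it is $\F_{q^4}$-linear, so its kernel is an $\F_{q^4}$-subspace. Hence each bad $b$ has a witness $y$ that can be taken from a fixed set of representatives of the $(q^{12}-1)/(q^4-1)=q^8+q^4+1$ points of $\mathrm{PG}(\F_{q^{12}}/\F_{q^4})$. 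The target bound is therefore exactly the statement that, in general, each projective point $y$ accounts for at most one parameter $b$.

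\textbf{Step 1: fix $y$ and solve for $b$.} Fix $y\ne0$ and put $c=b^{q^3}$, so that $b^{q^{11}}=c^{q^8}$. The equation $\Lambda_{u,b}(y)=0$ becomes
\[
 \phi_y(c):=u_0y\,c-u_1y^{q^8}c^{q^8}=u_0y-u_2y^{q^4}.
\]
The map $\phi_y$ is $\F_{q^4}$-linear in $c$, so the solution set in $c$ is empty or a coset of $\ker\phi_y$. We have $\ker\phi_y=0$ unless $c^{q^8-1}=u_0y/(u_1y^{q^8})$ is solvable with $c\ne0$. Taking $\N_{q^{12}/q^4}$, the norms of $y$ cancel, so this happens only if $\N_{q^6/q^2}(u_0)=\N_{q^6/q^2}(u_1)$.

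\textbf{Step 2: the generic case.} If $\N_{q^6/q^2}(u_0)\ne\N_{q^6/q^2}(u_1)$, then for every $y$ there is at most one $c$, hence at most one $b$. Summing over the $q^8+q^4+1$ representatives gives the bound. The subcase $u_0=u_1=0$ is also harmless: then \eqref{eq:involution} would force $u_2u_2^q=0$ against the first equation, which reads $0=1$, so it cannot occur.

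\textbf{Step 3: the degenerate case (main obstacle).} It remains to treat $\N_{q^6/q^2}(u_0)=\N_{q^6/q^2}(u_1)$, including $u_0=u_1=0$ in one coordinate. I would first try to exclude it using the involution equations \eqref{eq:involution}, in particular the relation
\[
 u_0u_1^{q^3}+u_1u_0^{q^5}+u_2u_2^q=0
\]
together with $u_0u_0^{q^3}+u_1u_2^{q^5}+u_2u_1^q=1$.

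If it cannot be excluded, I would instead bound the number of pairs $(y,b)$ differently. For each $y$ with $\ker\phi_y\ne0$, the right-hand side $u_0y-u_2y^{q^4}$ must lie in the image of $\phi_y$, and I expect this to hold only on a small set of $y$. Alternatively I would reverse the roles: fix $b$ and note that each bad $b$ then has a kernel of dimension at least $1$ over $\F_{q^4}$. Double counting the incidences $(y,b)$ would still yield at most $q^8+q^4+1$ values of $b$, provided each $b$ is hit by at least as many projective points $y$ as any single $y$ produces values of $b$.

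I expect this degenerate analysis to be the delicate point. The generic count itself is immediate.
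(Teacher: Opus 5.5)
Your Steps 1 and 2 are correct. They are the paper's first case in different notation: your $c$ and the paper's $z$ are related by $z=cy$. The gap is Step 3, which you leave open.

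It cannot be closed by showing that the degenerate case is impossible, because it occurs, even with all three norms equal. For $q=2$ and $\omega\in\F_4\setminus\F_2$, the coefficients $u_0=\omega$, $u_1=u_2=1$ satisfy \eqref{eq:involution}: indeed $\rho_u(z)=z^2+\omega z^8+z^{32}$ is an involution of $\F_{64}$. Moreover $\N_{q^6/q^2}(u_i)=1$ for $i=0,1,2$.

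Your double-counting fallback fails as stated. In the degenerate case every admissible $y$ yields $q^4$ parameters, namely a coset of the one-dimensional kernel of $\phi_y$. A bad $b$, however, generally has one-dimensional $\ker\Lambda_{u,b}$, so it is hit by a single projective point, and your proviso does not hold.

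What is missing is your first fallback made precise, plus one genuinely new input. Put $\psi(z)=u_0z-u_1z^{q^8}$ and $\chi(y)=u_0y-u_2y^{q^4}$.

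\begin{itemize}
\item Since $\phi_y(c)=\psi(cy)$, the image of $\phi_y$ is $\operatorname{im}\psi$ for every $y$. So $y$ is admissible exactly when $\chi(y)\in\operatorname{im}\psi$, which is an $\F_{q^4}$-linear condition on $y$.
\item If the admissible $y$ form a subspace of dimension at most $2$, there are at most $q^4+1$ admissible points. Each gives $q^4$ values of $b$, so $|\mathcal B_V|\le q^8+q^4$.
\item When $\N_{q^6/q^2}(u_0)\ne\N_{q^6/q^2}(u_2)$, this is automatic, because $\chi$ is bijective by \cref{lem:dickson}.
\item When all three norms coincide, $\chi$ is also singular, and you must prove $\operatorname{im}\chi\ne\operatorname{im}\psi$. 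Otherwise every $y$ is admissible and no useful bound results.
\end{itemize}

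The paper handles the last case as follows. Equal images give $\gamma\ne0$ with $u_0\gamma=u_1^{q^4}\gamma^{q^4}=u_2^{q^2}\gamma^{q^8}$. From this, $u_1^{1+q^4}=u_0u_2^{q^4}$, hence $u_0u_1^{q^2}u_2^{q^4}=\N_{q^6/q^2}(u_1)$. Then $\det_{\F_{q^2}}(u)=3\N_{q^6/q^2}(u_1)-\Tr_{q^6/q^2}(\N_{q^6/q^2}(u_1))=0$, contradicting the invertibility of $u$ forced by $\rho_u^2=1$. This use of $\det_{\F_{q^2}}(u)\ne0$ is the key idea your proposal lacks.
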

\begin{proof}
Write $u=u_V$. The coefficient calculation \eqref{eq:alpha-Lambda}
in the proof of \cref{lem:bad-fullspan} shows that
$b\in\mathcal B_V$ if and only
if some $\beta\ne0$ satisfies $\Lambda_{u,b}(\beta^{q^3})=0$,
that is,
\[
 u_0(1-b^{q^3})\beta^{q^3}
 -u_2\beta^{q^7}+u_1b^{q^{11}}\beta^{q^{11}}=0.
\]
By the second equation in \eqref{eq:alpha-Lambda},
\[
 \alpha=u_2(\beta b)^{q^7}+u_1\beta^{q^{11}}
       -u_0\beta^{q^3}-u_1(\beta b)^{q^{11}}.
\]

Set $y=\beta^{q^3}$ and $z=b^{q^3}y$. The equation
$\Lambda_{u,b}(\beta^{q^3})=0$ becomes
\begin{equation}\label{eq:incidence}
 u_0z-u_1z^{q^8}=u_0y-u_2y^{q^4},\qquad y\ne0.
\end{equation}
Conversely, each solution $(y,z)$ determines the unique parameter
$b=(z/y)^{q^9}$ and the unique $\beta=y^{q^9}$, and hence a
nonzero pair $(\alpha,\beta)$ with image in $V$.
For every $\lambda\in\F_{q^4}^*$, the pair $(\lambda y,\lambda z)$
is another solution giving the same $b$. These $q^4-1$ pairs are
distinct, so $|\mathcal B_V|$ is at most the number of solutions
of \eqref{eq:incidence} divided by $q^4-1$.

If $\N_{q^6/q^2}(u_0)\ne\N_{q^6/q^2}(u_1)$, then
$z\mapsto u_0z-u_1z^{q^8}$ is bijective by
\cref{lem:dickson}. There is exactly one $z$ for every nonzero
$y$, so
\[
 |\mathcal B_V|\le\frac{q^{12}-1}{q^4-1}=q^8+q^4+1.
\]

Suppose now that
$\N_{q^6/q^2}(u_0)=\N_{q^6/q^2}(u_1)$. The first equation in \eqref{eq:involution} excludes
$u_0=u_1=0$, so both coefficients are nonzero.
By \cref{lem:dickson}, the map
$z\mapsto u_0z-u_1z^{q^8}$ is singular.
The ratio of any two nonzero kernel elements belongs to
$\F_{q^4}^*$, and every $\F_{q^4}$-multiple of a kernel element
again belongs to the kernel. Hence the kernel has $q^4$
elements and the image has dimension $2$ over $\F_{q^4}$.

If $\N_{q^6/q^2}(u_0)\ne\N_{q^6/q^2}(u_2)$, then
$y\mapsto u_0y-u_2y^{q^4}$ is bijective.
There is exactly one $y$ for every $z$, and this $y$ is zero
precisely for the $q^4$ elements satisfying
$u_0z-u_1z^{q^8}=0$. Consequently,
\[
 |\mathcal B_V|
 \le\frac{q^{12}-q^4}{q^4-1}
 =q^8+q^4.
\]

It remains to consider
\[
 \N_{q^6/q^2}(u_0)
 =\N_{q^6/q^2}(u_1)
 =\N_{q^6/q^2}(u_2).
\]
Here all three coefficients are nonzero. Both maps in
\eqref{eq:incidence} have kernels of dimension $1$ and images
of dimension $2$ over $\F_{q^4}$.

We claim that their images are distinct.
Otherwise, there would
exist $\gamma\in\F_{q^{12}}^*$ such that
\[
 \Tr_{q^{12}/q^4}\bigl(\gamma(u_0z-u_1z^{q^8})\bigr)=\Tr_{q^{12}/q^4}\bigl((\gamma u_0 -\gamma^{q^4}u_1^{q^4})z\bigr)=0,
\]
and \[
 \Tr_{q^{12}/q^4}\bigl(\gamma(u_0y-u_2y^{q^4})\bigr)= \Tr_{q^{12}/q^4}\bigl((\gamma u_0-\gamma^{q^8}u_2^{q^8})y\bigr)=0
\]
for every $y,z\in\F_{q^{12}}$.
Thus
\[
 u_0\gamma=u_1^{q^4}\gamma^{q^4},
 \qquad
 u_0\gamma=u_2^{q^8}\gamma^{q^8}=u_2^{q^2}\gamma^{q^8}.
\]
Therefore
\[
 \frac{u_0}{u_2^{q^2}}
 =\gamma^{q^8-1}
 =\bigl(\gamma^{q^4-1}\bigr)^{q^4+1}
 =\left(\frac{u_0}{u_1^{q^4}}\right)^{q^4+1}.
\]
Simplifying and raising to the $q^2$-th power yields
\[
 u_1^{1+q^4}=u_0u_2^{q^4}.
\]
It follows that
\[
 u_0u_1^{q^2}u_2^{q^4}
 =u_1^{1+q^2+q^4}
 =\N_{q^6/q^2}(u_1).
\]
Since the three norms are equal, the determinant in
\eqref{eq:dickson}, applied to $u$, is then
\[
 3\N_{q^6/q^2}(u_1)
 -\Tr_{q^6/q^2}\bigl(\N_{q^6/q^2}(u_1)\bigr)=0.
\]
This contradicts the invertibility of $u$, which follows from
$\rho_u^2=1$.

The two images are distinct $\F_{q^4}$-planes in $\F_{q^{12}}$, so
they meet in $q^4$ elements, each with $q^4$ preimages under either
map. Equation~\eqref{eq:incidence}, with $y=0$ allowed, has
$q^4\cdot q^4\cdot q^4=q^{12}$ solutions, $q^4$ of them with $y=0$,
and
\[
 |\mathcal B_V|
 \le\frac{q^{12}-q^4}{q^4-1}
 =q^8+q^4.
\]
\end{proof}

\section{Orthogonality and the parameter count}\label{sec:orthogonality}

In this section we show that $V$ and $V^\perp$ give the same set of
bad parameters, and we complete the proof of \cref{thm:four}.
Orthogonality refers to the nondegenerate pairing $(x,y)\mapsto\Tr_{q^6/q}(xy)$ on $\F_{q^6}$.

\begin{lemma}\label{lem:cofactors}
Let $u(z)=u_0z+u_1z^{q^2}+u_2z^{q^4}$ be an
$\F_{q^2}$-linear map on $\F_{q^6}$ such that $\rho_u^2=1$,
and put $d=\det_{\F_{q^2}}(u)$. Then $d\ne0$,
\[
 u^{-1}(z)=u_0^{q^3}z+u_1^{q^3}z^{q^2}+u_2^{q^3}z^{q^4},
\]
and
\begin{equation}\label{eq:cofactors}
\begin{aligned}
 d\,u_0^{q^3}&=u_0^{q^2}u_0^{q^4}-u_1^{q^2}u_2^{q^4},\\
 d\,u_1^{q^3}&=u_2u_2^{q^4}-u_1u_0^{q^4},\\
 d\,u_2^{q^3}&=u_1u_1^{q^2}-u_2u_0^{q^2}.
\end{aligned}
\end{equation}
\end{lemma}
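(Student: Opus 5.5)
The plan is to read everything off the identity $\rho_u^2=1$, rewritten as a statement about $u$ and its conjugate map. Put $\bar u(z)=u_0^{q^3}z+u_1^{q^3}z^{q^2}+u_2^{q^3}z^{q^4}$, the $\F_{q^2}$-linear map obtained by applying the $q^3$-Frobenius to the coefficients. It satisfies $\bar u(z)=u(z^{q^3})^{q^3}$.

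First I compute the composition. By definition $\rho_u(z)=u(z^{q^3})$, so
\[
 \rho_u^2(z)=u\bigl(u(z^{q^3})^{q^3}\bigr)=u\bigl(\bar u(z)\bigr).
\]
Hence $\rho_u^2=1$ says exactly that $u\circ\bar u$ is the identity. Both maps are $\F_{q^2}$-linear endomorphisms of the finite-dimensional space $\F_{q^6}$. So $u$ is invertible, $u^{-1}=\bar u$, and $d=\det_{\F_{q^2}}(u)\ne0$. This is the first assertion and the formula for $u^{-1}$. As a consistency check, expanding $u\circ\bar u$ coefficientwise should reproduce \eqref{eq:involution}.

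For the cofactor identities I use the matrix from the proof of \cref{lem:dickson} with $m=2$. Let $\mathcal D=(u_{j-i}^{q^{2i}})_{0\le i,j\le2}$, with indices modulo $3$; it represents $u$ and $\det\mathcal D=d$. Composition of $\F_{q^2}$-linearised polynomials modulo $z^{q^6}-z$ corresponds to multiplication of these Dickson-type matrices, with the order fixed by the convention in that proof. So the matrix of $u^{-1}=\bar u$ is $\mathcal D^{-1}=d^{-1}\operatorname{adj}(\mathcal D)$. Comparing the first row of $\operatorname{adj}(\mathcal D)$ with the first row $(u_0^{q^3},u_1^{q^3},u_2^{q^3})$ of the matrix of $\bar u$ gives $d\,u_k^{q^3}$ as an explicit $2\times2$ minor for each $k$. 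I then check that these minors are the right-hand sides of \eqref{eq:cofactors}. For example, the $(0,0)$ cofactor is $u_0^{q^2}u_0^{q^4}-u_1^{q^2}u_2^{q^4}$: the $(1,1)$ entry is $u_0^{q^2}$, the $(2,2)$ entry is $u_0^{q^4}$, the $(1,2)$ entry is $u_1^{q^2}$ and the $(2,1)$ entry is $u_2^{q^4}$. This matches the first line of \eqref{eq:cofactors}.

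The main obstacle is bookkeeping: whether the composition corresponds to $\mathcal D\mathcal D'$ or $\mathcal D'\mathcal D$, and whether the correct cofactors come from a row or a column of $\operatorname{adj}(\mathcal D)$. If the conventions get tangled, I will verify the identities directly instead. Multiply each claimed right-hand side against the coefficients of $u$ and use $u\circ\bar u=1$ together with $\bar u\circ u=1$, which holds because a one-sided inverse is two-sided. This is a direct expansion checkable term by term.
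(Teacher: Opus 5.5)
Your proposal is correct and follows essentially the paper's route: $\rho_u^2=u\circ\bar u$ gives $u^{-1}=\bar u$ and $d\ne0$, and the three identities are the first-column cofactors of the Dickson matrix $\mathcal D$. The paper obtains the same cofactors by applying Cramer's rule to $\mathcal D^{\mathsf T}(u_0^{q^3},u_1^{q^3},u_2^{q^3})^{\mathsf T}=(1,0,0)^{\mathsf T}$, which comes from $u^{-1}\circ u=1$. Your bookkeeping worry is harmless: the Dickson matrix of $\bar u$ is the two-sided inverse of $\mathcal D$ whichever order composition corresponds to, and its first row is the coefficient vector of $\bar u$.
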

\begin{proof}
The identity $\rho_u^2=1$ gives
$u^{-1}(z)=(u(z^{q^3}))^{q^3}$, proving the formula for the
inverse. Comparing coefficients in $u^{-1}(u(z))=z$ yields
\[
 \begin{pmatrix}
 u_0&u_2^{q^2}&u_1^{q^4}\\
 u_1&u_0^{q^2}&u_2^{q^4}\\
 u_2&u_1^{q^2}&u_0^{q^4}
 \end{pmatrix}
 \begin{pmatrix}u_0^{q^3}\\u_1^{q^3}\\u_2^{q^3}\end{pmatrix}
 =\begin{pmatrix}1\\0\\0\end{pmatrix}.
\]
The coefficient matrix is the transpose of the Dickson matrix
of $u$, so its determinant is $d\ne0$.
Cramer's rule gives \eqref{eq:cofactors}.
\end{proof}

\begin{lemma}\label{lem:pairing}
If $\dim_{\F_q}V=3$ and $\F_{q^2}V=\F_{q^6}$, then
$\F_{q^2}V^\perp=\F_{q^6}$ and
\[
 \mathcal B_{V^\perp}=\mathcal B_V.
\]
\end{lemma}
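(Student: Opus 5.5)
The plan is to identify the involution fixing $V^\perp$ explicitly in terms of $\rho=\rho_u$, where $u=u_V$. Then $\mathcal B_{V^\perp}=\mathcal B_V$ reduces, via \eqref{eq:BV} and \cref{lem:bad-fullspan}, to showing that $\Lambda_{u',b}$ is singular exactly when $\Lambda_{u,b}$ is, with $u'=u_{V^\perp}$.

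\emph{Spanning.} Suppose $\F_{q^2}V^\perp\neq\F_{q^6}$. Since $\dim_{\F_q}V^\perp=3$, the space $P=\F_{q^2}V^\perp$ is an $\F_{q^2}$-plane. Its orthogonal complement $P^\perp$ (for $\Tr_{q^6/q}(xy)$) is an $\F_{q^2}$-subspace, because $P$ is. It has $\F_q$-dimension $2$ and is contained in $(V^\perp)^\perp=V$. So $V$ would contain an $\F_{q^2}$-line $\F_{q^2}w$. This contradicts $V\cap\kappa V=\{0\}$ for $\kappa\in\F_{q^2}\setminus\F_q$, which was shown in the proof of \cref{lem:fixed-spaces}.

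\emph{The involution of $V^\perp$.} Let $u^*$ be the adjoint of $u$ for $\Tr_{q^6/q}(xy)$, namely $u^*(y)=u_0y+(u_1y)^{q^4}+(u_2y)^{q^2}$. Put $\rho^*(y)=u^*(y)^{q^3}$. Then $\Tr_{q^6/q}(\rho(x)y)=\Tr_{q^6/q}(x\rho^*(y))$, and $\rho^*$ is a $q$-semilinear involution because it is adjoint to one. For $x\in V$ this gives $\Tr(x(\rho^*(y)-y))=\Tr(\rho(x)y)-\Tr(xy)=0$, so the image of $\rho^*-1$ lies in $V^\perp$. That image equals $\Fix(-\rho^*)$, which has dimension three by \cref{lem:fixed-spaces}: in odd characteristic $-\rho^*$ is an involution, and in characteristic $2$ one argues via kernel and image. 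Hence $V^\perp=\Fix(-\rho^*)$, and by uniqueness in \cref{lem:fixed-spaces}
\[
 u'(z)=-\rho^*(z^{q^3})=-\bigl(u_0^{q^3}z+u_2^{q^5}z^{q^2}+u_1^{q}z^{q^4}\bigr),
\]
that is, $(u'_0,u'_1,u'_2)=-(u_0^{q^3},u_2^{q^5},u_1^{q})$. One should check that these coefficients satisfy \eqref{eq:involution}.

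\emph{Equality of the bad sets.} Now $b\in\mathcal B_{V^\perp}$ if and only if $\Lambda_{u',b}$ is singular, and $b\in\mathcal B_V$ if and only if $\Lambda_{u,b}$ is singular. My first attempt will be structural rather than an expansion of $\Delta_b$. I will compute the adjoint of the $\F_{q^4}$-linear map $\Lambda_{u,b}$ on $\F_{q^{12}}$ with respect to $\Tr_{q^{12}/q^4}(xy)$. This adjoint has coefficients $u_0(1-b^{q^3})$, $-u_2^{q^8}$ (moved to the $q^8$ slot) and $(u_1b^{q^{11}})^{q^4}$ (moved to the $q^4$ slot). I then aim to show it equals $\Lambda_{u',b}$ composed on both sides with invertible maps: Frobenius powers, multiplication by a nonzero scalar such as $1-b^{q^3}$ or a power of $d$, and possibly $u^{-1}$, written through \cref{lem:cofactors}. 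Since adjoints preserve rank, singularity would transfer.

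If this matching fails, the fallback is computational. Substitute $u'$ into \eqref{eq:Delta}, use $\N_{q^6/q^2}(u_i^{q^j})=\N_{q^6/q^2}(u_i)^{q^j}$, and rewrite the trace term with the cofactor identities \eqref{eq:cofactors}. The goal is to show that $\Delta_b(u')$ is a $q^3$-power conjugate of $\Delta_b(u)$, up to a factor $d^{\pm1}$, which is nonzero. The main obstacle I expect is exactly this matching of the cross term $\Tr_{q^{12}/q^4}\bigl((1-b^{q^3})b^{q^7}u_0u_1^{q^2}u_2^{q^4}\bigr)$ and the norm coefficients between $u$ and $u'$. Here the relations \eqref{eq:involution}, equivalently \eqref{eq:cofactors}, must be used essentially, since $u'$ is not simply a conjugate of $u$.
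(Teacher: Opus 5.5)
Your spanning argument is a valid variant of the paper's: $(\F_{q^2}V^\perp)^\perp$ would be an $\F_{q^2}$-line inside $V$. Your formula $u_{V^\perp}=-(u_0^{q^3},u_2^{q^5},u_1^{q})$ agrees with \eqref{eq:uperp}; note that $-\rho^*$ is an involution in every characteristic, so $q$ even needs no separate treatment.

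The gap is the actual content of the lemma, $\mathcal B_{V^\perp}=\mathcal B_V$. You only outline two strategies, and neither closes as stated. Write $u'=u_{V^\perp}$, and let
\[
 \Lambda^*_{u,b}(w)=u_0(1-b^{q^3})w+u_1^{q^4}b^{q^3}w^{q^4}-u_2^{q^2}w^{q^8}
\]
be the adjoint of $\Lambda_{u,b}$ for $\Tr_{q^{12}/q^4}(xy)$. What \eqref{eq:involution} actually yields (its second and third equations raised to the $q^4$-th and $q^2$-th powers) is an intertwining
\[
 \Lambda_{u',b}\circ B^*=B'\circ\Lambda^*_{u,b},\qquad
 B^*(w)=u_0w-u_1^{q^4}w^{q^4},\quad
 B'(z)=-u_0^{q^3}z+u_2^{q^5}z^{q^8}.
\]
Here $B^*$ is singular when $\N_{q^6/q^2}(u_0)=\N_{q^6/q^2}(u_1)$, and $B'$ is singular when $\N_{q^6/q^2}(u_0)=\N_{q^6/q^2}(u_2)$. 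Nothing rules these cases out; this is the same degeneracy behind the case analysis in \cref{lem:BV-size}. So composing with invertible maps (Frobenius powers, scalars, $u^{-1}$) does not work in general. Your computational target is also not the identity that holds: there is no Frobenius twist, and the paper proves $\Delta_b(u_{V^\perp})=-\Delta_b(u)/d^2$.

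For the computation, the missing step is to rescale by $d=\det_{\F_{q^2}}(u)$.
\begin{enumerate}
\item By \eqref{eq:cofactors}, the coefficients of $d\,u_{V^\perp}$ are the polynomials $-p_0,-p_2^{q^2},-p_1^{q^4}$ in $u_0,u_1,u_2$. Since $d\in\F_{q^2}$, $\Delta_b(d\,u_{V^\perp})=d^3\Delta_b(u_{V^\perp})$.
\item Check that $\N_{q^6/q^2}(p_0)$, $\N_{q^6/q^2}(p_2)$, $\N_{q^6/q^2}(p_1)$ and $p_0p_1^{q^2}p_2^{q^4}$ equal $dn_0+\lambda$, $dn_1+\lambda$, $dn_2+\lambda$ and $dt+\lambda$ respectively, with one common $\lambda\in\F_{q^2}$.
\item It follows that $d^3\Delta_b(u_{V^\perp})=-d\Delta_b(u)-\lambda E(a)$, where $a=b^{q^3}$ and $E(a)=\N_{q^{12}/q^4}(1-a)+\N_{q^{12}/q^4}(a)-1+\Tr_{q^{12}/q^4}((1-a)a^{q^4})$.
\item $E(a)$ vanishes identically in $a$.
\end{enumerate}

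Alternatively, your structural idea can be repaired without any invertibility.
\begin{enumerate}
\item If $\Lambda^*_{u,b}(w)=0$ and $B^*(w)=0$, then also $u_0w-u_2^{q^2}w^{q^8}=0$.
\item For $w\ne0$, \cref{lem:dickson} then forces all three norms $\N_{q^6/q^2}(u_i)$ to be equal. The $\gamma$-argument in the proof of \cref{lem:BV-size} then gives $\det_{\F_{q^2}}(u)=0$, which is impossible. Hence $w=0$.
\item So $B^*$ embeds $\ker\Lambda^*_{u,b}$ into $\ker\Lambda_{u',b}$. Since an adjoint has the same rank, $\dim\ker\Lambda_{u',b}\ge\dim\ker\Lambda_{u,b}$.
\item Apply this to $V^\perp$, using $u_{(V^\perp)^\perp}=u_V$ by uniqueness in \cref{lem:fixed-spaces}. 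This gives equality of the kernel dimensions, and \eqref{eq:BV} gives $\mathcal B_{V^\perp}=\mathcal B_V$.
\end{enumerate}
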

\begin{proof}
For $\kappa\in\F_{q^2}\setminus\F_q$ we have
$V\oplus\kappa V=\F_{q^6}$. Taking orthogonal complements gives
$V^\perp\cap\kappa^{-1}V^\perp=0$.
Since $V^\perp$ has dimension $3$ over $\F_q$, it too spans $\F_{q^6}$
over $\F_{q^2}$.

Write $u=u_V$. The adjoint $u^*$, defined by
$\Tr_{q^6/q}(u(x)y)=\Tr_{q^6/q}(xu^*(y))$, is
\[
 u^*(z)=u_0z+u_2^{q^2}z^{q^2}+u_1^{q^4}z^{q^4}.
\]
The adjoint of $\rho_u$ is
$\rho_u^*(z)=(u^*(z))^{q^3}$. Taking adjoints in $\rho_u^2=1$ gives
$(\rho_u^*)^2=1$, and $\rho_u^*$ is $q$-semilinear over $\F_{q^2}$
because $u^*$ is $\F_{q^2}$-linear.
Since $V=\ker(1-\rho_u)$,
$V^\perp=\operatorname{im}(1-\rho_u^*)$. Every element
$z=w-\rho_u^*(w)$ satisfies $\rho_u^*(z)=-z$. Hence
$\operatorname{im}(1-\rho_u^*)\subseteq\ker(1+\rho_u^*)$.
The first space has dimension three because $1-\rho_u^*$ has the
same rank as $1-\rho_u$. The second is the fixed space of the
$q$-semilinear involution $-\rho_u^*$, so it also has dimension three
by \cref{lem:fixed-spaces}.
The two spaces are therefore equal, so $-\rho_u^*$ is the involution
fixing $V^\perp$, and
\begin{equation}\label{eq:uperp}
 u_{V^\perp}(z)=-u_0^{q^3}z-u_2^{q^5}z^{q^2}-u_1^qz^{q^4}.
\end{equation}

Put $d=\det_{\F_{q^2}}(u)$. We show that
\[
 \Delta_b(u_{V^\perp})=-\frac{\Delta_b(u)}{d^2}.
\]
For this purpose, write
\[
 n_i=\N_{q^6/q^2}(u_i)\quad(0\le i\le2),\qquad
 t=u_0u_1^{q^2}u_2^{q^4},
\]
and put
\[
 \lambda=\Tr_{q^6/q^2}(t^{1+q^2})
             -n_0n_1-n_0n_2-n_1n_2\in\F_{q^2}.
\]
Recall that $d=n_0+n_1+n_2-\Tr_{q^6/q^2}(t)$.
Let $p_0,p_1,p_2$ denote the right-hand sides of
\eqref{eq:cofactors}, so $p_i=d\,u_i^{q^3}$.
Expanding their norms gives
\[
\begin{aligned}
 \N_{q^6/q^2}(p_0)
 &=n_0^2-n_0\Tr_{q^6/q^2}(t)
       +\Tr_{q^6/q^2}(t^{1+q^2})-n_1n_2
   =dn_0+\lambda,\\
 \N_{q^6/q^2}(p_1)
 &=n_2^2-n_2\Tr_{q^6/q^2}(t)
       +\Tr_{q^6/q^2}(t^{1+q^2})-n_0n_1
   =dn_2+\lambda,\\
 \N_{q^6/q^2}(p_2)
 &=n_1^2-n_1\Tr_{q^6/q^2}(t)
       +\Tr_{q^6/q^2}(t^{1+q^2})-n_0n_2
   =dn_1+\lambda.
\end{aligned}
\]
Similarly, the mixed product is
\[
\begin{aligned}
 p_0p_1^{q^2}p_2^{q^4}
 &=t^{q^2+q^4}-t^2+(n_0+n_1+n_2)t
                      -n_0n_1-n_0n_2-n_1n_2\\
 &=dt+\lambda.
\end{aligned}
\]
By \eqref{eq:uperp} and \eqref{eq:cofactors}, the coefficients
of $d\,u_{V^\perp}$ are $-p_0,-p_2^{q^2},-p_1^{q^4}$.
Since $d,\lambda\in\F_{q^2}\subseteq\F_{q^4}$,
substitution into \eqref{eq:Delta} yields
\[
\begin{aligned}
 d^3\Delta_b(u_{V^\perp})
 ={}&-d\Delta_b(u)\\
 &-\lambda\Bigl(
   \N_{q^{12}/q^4}(1-b^{q^3})
   +\N_{q^{12}/q^4}(b^{q^3})-1
   +\Tr_{q^{12}/q^4}((1-b^{q^3})b^{q^7})\Bigr).
\end{aligned}
\]
The expression in parentheses vanishes. Indeed, for
$a=b^{q^3}$, the cubic norm and trace satisfy
\[
\begin{aligned}
 \N_{q^{12}/q^4}(1-a)
 &=1-\Tr_{q^{12}/q^4}(a)
      +\Tr_{q^{12}/q^4}(a^{1+q^4})-\N_{q^{12}/q^4}(a),\\
 \Tr_{q^{12}/q^4}((1-a)a^{q^4})
 &=\Tr_{q^{12}/q^4}(a)-\Tr_{q^{12}/q^4}(a^{1+q^4}).
\end{aligned}
\]
Thus $d^3\Delta_b(u_{V^\perp})=-d\Delta_b(u)$.
Dividing by $d^3\ne0$ proves the claimed identity.
By \eqref{eq:BV}, it follows that
$\mathcal B_{V^\perp}=\mathcal B_V$.
\end{proof}

\begin{lemma}\label{lem:selfpaired}
Among the $q^4+q$ scalar classes in \cref{lem:orbit-moment},
at most $2q^3$ satisfy $V^\perp=aV$ for some
$a\in\F_{q^6}^*$.
\end{lemma}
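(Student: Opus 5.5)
The plan is to turn self-pairing into a totally isotropic condition and count. For $a\in\F_{q^6}^*$ put $B_a(x,y)=\Tr_{q^6/q}(axy)$, a nondegenerate symmetric $\F_q$-bilinear form on $\F_{q^6}$. Since $\dim_{\F_q}V=\dim_{\F_q}aV=\dim_{\F_q}V^\perp=3$, the condition $V^\perp=aV$ is equivalent to $aV\subseteq V^\perp$. That is, $V$ is totally isotropic for $B_a$. So I would bound the number of pairs $(V,a)$ with $V$ totally isotropic for $B_a$ and $\F_{q^2}V=\F_{q^6}$, and then divide by the size of the orbits of the scaling action.

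\emph{Step 1 (scaling).} If $V^\perp=aV$, then
\[
(eV)^\perp=e^{-1}V^\perp=(ae^{-2})(eV).
\]
So the set of self-paired scalar classes is stable, and the parameter $a$ moves by squares. For a self-paired $V$ outside the exceptional class $\{e\F_{q^3}\}$, the admissible $a$ form a single coset $a\F_q^*$. This uses the stabiliser $\{e:eV=V\}=\F_q^*$ from the proof of \cref{lem:orbit-moment}. The exceptional class is self-paired, because $\F_{q^3}^\perp=\ker\Tr_{q^6/q^3}=\theta\F_{q^3}$ with $\theta^{q^3}=-\theta$. I count it separately as one class. Hence each generic self-paired class contributes exactly $(q-1)\cdot\frac{q^6-1}{q-1}=q^6-1$ pairs $(V,a)$. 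The number of generic classes is therefore
\[
\frac{1}{q^6-1}\sum_{a\ne0}N(a),
\]
where $N(a)$ is the number of admissible totally isotropic $3$-spaces for $B_a$. Grouping $a$ by square classes, this amounts to a weighted average of $N(a)$ over representatives.

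\emph{Step 2 (counting isotropic spaces).} For $q$ odd, $B_a$ defines a hyperbolic or elliptic quadric in $\mathrm{PG}(5,q)$. These have $2(q+1)(q^2+1)$ and $0$ generators (planes) respectively. For $q$ even, $B_a$ is symmetric but not alternating. Every totally isotropic space lies in the $5$-dimensional kernel $K$ of the semilinear functional $x\mapsto\Tr_{q^6/q}(ax^2)$. On $K$ the form is alternating with a $1$-dimensional radical, so the totally isotropic $3$-spaces correspond to Lagrangians of a $4$-dimensional symplectic space. There are $(q+1)(q^2+1)$ of these. In either case $N(a)\le2(q+1)(q^2+1)$, and averaging (for $q$ odd, at most half of the $a$ give a hyperbolic form) yields roughly $(q+1)(q^2+1)$, or at worst $2(q+1)(q^2+1)$.

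\emph{Step 3 (the obstacle).} The crude count $2(q+1)(q^2+1)$ exceeds $2q^3$ by $2q^2+2q+2$. So the main work is to remove the isotropic $3$-spaces $V$ with $\dim_{\F_q^2}\F_{q^2}V=2$, and to account for the exceptional class. By \cref{lem:trace-planes}, such a $V$ lies in $c\ker\Tr_{q^6/q^2}$. First I would determine, for each $a$, which $\F_{q^2}$-planes $P$ make $B_a|_P$ degenerate enough to contain totally isotropic $3$-spaces. Then I would count those spaces: they must contain the radical of $B_a|_P$, which is a $4$-dimensional $\F_q$-space. I expect this to remove at least $2(q^2+q+1)$ generators per relevant square class of $a$, which brings the total down to at most $2q^3$ after adding back the single exceptional class. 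If the exact subtraction turns out messy, I would instead use the involution description. Self-pairing $V^\perp=aV$ translates, via \eqref{eq:uperp}, into $-\rho_u^*=a\rho_ua^{-1}$, which is explicit equations on $(u_0,u_1,u_2)$. I would then count solutions of \eqref{eq:involution} satisfying these equations directly.
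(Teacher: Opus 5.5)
Your Steps 1 and 2 already prove the lemma, so Step 3 should be dropped. The ``obstacle'' arises because you compared $2q^3$ with the per-$a$ maximum $2(q+1)(q^2+1)$ rather than with the average you mention yourself.

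For $q$ odd, the Gram matrix of $B_a$ in a fixed $\F_q$-basis has determinant $\N_{q^6/q}(a)$ times that of $B_1$. A nondegenerate $6$-dimensional form is hyperbolic if and only if $-\det$ is a square. The norm maps $\F_{q^6}^*$ onto $\F_q^*$ with equal fibres, so exactly $(q^6-1)/2$ values of $a$ give $Q^+(5,q)$. The other half give $Q^-(5,q)$, which contains no planes. For $q$ even, your reduction to the hyperplane $\{x:\Tr_{q^6/q}(ax^2)=0\}$ and to Lagrangians of $W(3,q)$ is correct, and gives $N(a)\le(q+1)(q^2+1)$ for every $a$. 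In both cases
\[
\sum_{a\ne0}N(a)\le(q^6-1)(q+1)(q^2+1).
\]
Every self-paired class contributes exactly $q^6-1$ pairs $(V,a)$. This includes the class of $\F_{q^3}$, which has $q^3+1$ members each admitting $q^3-1$ values of $a$, so you need not treat it separately. Hence there are at most $(q+1)(q^2+1)=q^3+q^2+q+1$ self-paired classes. This is at most $2q^3$ for every $q\ge2$, since $q^3-q^2-q-1\ge1$.

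As written, your proposal ends at an unproved expectation in Step 3. That sketch is also unsound: a totally isotropic $3$-space in a $4$-dimensional $P$ forces the radical of $B_a|_P$ to have dimension at least $2$, not $4$, since dimension $4$ would mean $B_a|_P=0$. The only missing step is the arithmetic above.

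Completed this way, your argument takes a genuinely different route from the paper's. The paper never identifies the isotropy structure. It works in the coordinates $(u_0,u_1,u_2)$ of the involution and uses \eqref{eq:uperp} to rewrite $V^\perp=aV$ as $(u_0a)^{q^3}+u_0a=0$ and $u_2=-a^{q-1}u_1^q$. It then uses the determinant to restrict to the $q^6-1$ pairs $(a,d)$ with $d^{q+1}=1$ and $\N_{q^6/q^2}(a)^{q-1}=-d^{1-q}$. Finally it shows that $u_0$ has $q^3$ choices and $u_1$ at most two, via the quantity $w$, whose trace and norm are fixed by $a,d,u_0$. This gives at most $2q^3(q^6-1)$ pairs.

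Your argument replaces that coefficient computation with the classical generator counts for $Q^+(5,q)$ and $W(3,q)$. It is shorter and more conceptual, and it gives the sharper bound $q^3+q^2+q+1$, which would slightly improve \eqref{eq:bound}. It does rely on the classification of quadratic forms over finite fields and on the characteristic-two reduction. The paper's version is self-contained and reuses the involution machinery of \cref{lem:cofactors,lem:pairing}, at the cost of a longer and more delicate computation.
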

\begin{proof}
Write $u=u_V$ and let $d=\det_{\F_{q^2}}(u)$.
If $a\in\F_{q^6}^*$, then $aV$ is the fixed space of the involution
$z\mapsto a\rho_u(a^{-1}z)$, hence
\[
u_{aV}(z)=a\,u(a^{-q^3}z)
         =a^{1-q^3}u_0z+a^{1-q^5}u_1z^{q^2}+a^{1-q}u_2z^{q^4}.
\]
Comparing with \eqref{eq:uperp}, the equality $V^\perp=aV$ is
equivalent to
\[
(u_0a)^{q^3}+u_0a=0,
\qquad
u_2=-a^{q-1}u_1^q;
\]
the remaining relation $a^{1-q^5}u_1=-u_2^{q^5}$ is the
$q^5$-th power of the second one.
The formula for $u^{-1}$ in \cref{lem:cofactors} gives
$\det_{\F_{q^2}}(u^{-1})=d^{q^3}=d^q$, so $d^{q+1}=1$.
Since $u_{aV}$ is obtained from $u$ by composing with the
multiplications by $a^{-q^3}$ and by $a$, we have
$\det_{\F_{q^2}}(u_{aV})=\N_{q^6/q^2}(a)^{1-q}d$, while
\eqref{eq:uperp} and \eqref{eq:dickson} give
$\det_{\F_{q^2}}(u_{V^\perp})=-d^q$. Hence
\[
\N_{q^6/q^2}(a)^{1-q}d=-d^q.
\]
Thus every such pair $(a,d)$ satisfies
\[
d^{q+1}=1,
\qquad
\N_{q^6/q^2}(a)^{q-1}=-d^{1-q}.
\]
There are exactly $q^6-1$ pairs satisfying these two equations.
Indeed, there are $q+1$ choices for $d$. For each of them,
$(-d^{1-q})^{q+1}=1$, so $-d^{1-q}$ has exactly $q-1$ preimages under
$x\mapsto x^{q-1}$ in $\F_{q^2}^*$. Since $\N_{q^6/q^2}$ maps
$\F_{q^6}^*$ onto $\F_{q^2}^*$ with fibres of size
$(q^6-1)/(q^2-1)$, the second equation has
$(q-1)(q^6-1)/(q^2-1)=(q^6-1)/(q+1)$ solutions $a$.

Fix such a pair $(a,d)$.
The equation $(u_0a)^{q^3}+u_0a=0$ gives exactly $q^3$
possibilities for $u_0$.
We show that each gives at most two possibilities for $u_1$.

The inverse coefficient identity
\[
du_0^{q^3}
=u_0^{q^2}u_0^{q^4}-u_1^{q^2}u_2^{q^4},
\]
after substituting $u_2=-a^{q-1}u_1^q$, using $d\in \F_{q^2}$, and raising to the
$q^4$-th power, gives
\[
u_1^{q^3+1}
=a^{q^2-q^3}\bigl(du_0^q-u_0^{1+q^2}\bigr).
\]
Put
\[
w=d^q u_1^{1+q^2+q^4}\in\F_{q^2}.
\]
Raising 
\[
du_1^{q^3}=u_2u_2^{q^4}-u_1u_0^{q^4}
\]
to the $q^3$-th power, using $u_2=-a^{q-1}u_1^q$, and multiplying by $u_1$ yields
\[
d^q u_1^2
=a^{q^2+q^4-q-q^3}u_1^{1+q^2+q^4}
-u_0^q u_1^{1+q^3}.
\]
Using $d^{q+1}=1$ and
$d\N_{q^6/q^2}(a)^{1-q}=-d^q$, this becomes
\[
u_1^2
=-a^{q^5-1}w-du_0^q u_1^{1+q^3}.
\]

The coefficient of $z$ in $\rho_u^2(z)=z$ gives
\[
u_0u_0^{q^3}+u_1u_2^{q^5}+u_2u_1^q=1.
\]
Since
\[
u_2=-a^{q-1}u_1^q,
\qquad
u_2^{q^5}=-a^{1-q^5}u_1,
\]
this equation becomes
\[
u_0u_0^{q^3}
-a^{1-q^5}u_1^2-a^{q-1}u_1^{2q}=1,
\]
or equivalently
\[
a^{1-q^5}u_1^2+a^{q-1}u_1^{2q}
=u_0u_0^{q^3}-1.
\]
Substituting
\[
u_1^2=-a^{q^5-1}w-du_0^q u_1^{1+q^3}
\]
and its $q$-th power gives
\[
\begin{aligned}
w+w^q={}&1-u_0u_0^{q^3}\\
&-da^{1-q^5}u_0^q u_1^{1+q^3}\\
&-d^qa^{q-1}u_0^{q^2}
   \bigl(u_1^{1+q^3}\bigr)^q.
\end{aligned}
\]

Also,
\[
w^{q+1}
=\N_{q^6/q^3}(u_1)^{1+q+q^2}.
\]
Thus both $w+w^q$ and $w^{q+1}$ are determined by $a,d,u_0$.
There are at most two possibilities for $w$, since it satisfies
the quadratic polynomial with this prescribed trace and norm.

Each value of $w$ determines $u_1^2$. Indeed, substituting
\[
u_1^{1+q^3}
=a^{q^2-q^3}\bigl(du_0^q-u_0^{1+q^2}\bigr)
\]
into the expression for $u_1^2$ gives
\[
u_1^2
=-a^{q^5-1}w
-da^{q^2-q^3}u_0^q
 \bigl(du_0^q-u_0^{1+q^2}\bigr).
\]
The right-hand side depends only on the fixed values
$a,d,u_0,w$.
Moreover, the definition of $w$, together with $d^{q+1}=1$,
gives
\[
u_1^{1+q^2+q^4}=dw.
\]
Thus both the square of $u_1$ and its norm to $\F_{q^2}$
are determined.

These two values determine $u_1$: if $u_1^2=0$, then $u_1=0$;
otherwise the ratio $x$ of two candidates satisfies $x^2=1$, so
$x\in\F_q$, and equality of their norms to $\F_{q^2}$ gives $x^3=1$;
hence $x=1$. Then $u_2=-a^{q-1}u_1^q$ is determined as well.

Consequently, there are at most
\[
2(q^6-1)\,q^3
\]
pairs $(a,V)$ satisfying $V^\perp=aV$.
Every scalar class admitting such a pair contributes exactly
$q^6-1$ pairs: if its scalar stabiliser has size $k_V$, it contains
$(q^6-1)/k_V$ subspaces, each admitting exactly $k_V$ values of $a$.
Division by $q^6-1$ proves the claim.
\end{proof}
\begin{proof}[Proof of \cref{thm:four}]
The map $V\mapsto V^\perp$ is an involution on scalar classes,
since $(eV)^\perp=e^{-1}V^\perp$.
There are $q^4+q$ classes by \cref{lem:orbit-moment},
and at most $2q^3$ are fixed by this involution by
\cref{lem:selfpaired}.
The number of orbits of the involution is therefore at most
\[
\frac{q^4+q+2q^3}{2}.
\]
By \cref{lem:pairing}, the classes in each orbit exclude
the same set of parameters.
Each such set has size at most $q^8+q^4+1$ by
\cref{lem:BV-size}.
Consequently, condition~(2) of \cref{thm:bad} holds for at most
\[
\frac{q^4+q+2q^3}{2}\,(q^8+q^4+1)
\]
parameters.

Condition~(1) holds for at most
$(q^2+1)(q^4-1)q^4$ parameters outside the norm exception.
There are exactly $q^8+q^4+1$ parameters satisfying
$\N_{q^{12}/q^4}(b)=-1$.
Discarding these parameters and those satisfying either
condition of \cref{thm:bad} leaves at least
\[
q^{12}
-\frac{q^4+2q^3+q+2}{2}\,(q^8+q^4+1)
-(q^2+1)(q^4-1)q^4
\]
good parameters, proving \eqref{eq:bound}.

To check positivity for $q\ge3$, divide this expression
by $q^{12}$. The result is
\[
1-\frac12(1+2q^{-1}+q^{-3}+2q^{-4})
             (1+q^{-4}+q^{-8})
 -(q^{-2}+q^{-4})(1-q^{-4}).
\]
For $q\ge3$, we have
\[
\frac12(1+2q^{-1}+q^{-3}+2q^{-4})
\le\frac{70}{81},
\]
and
\[
(q^{-2}+q^{-4})(1-q^{-4})
\le q^{-2}+q^{-4}\le\frac{10}{81}.
\]
Hence the normalised bound is at least
\[
1-\frac{70}{81}
  \left(1+\frac1{81}+\frac1{6561}\right)
-\frac{10}{81}
=\frac{821}{531441}>0.
\]
This proves existence for every $q\ge3$.
\Cref{app:q2} proves the remaining case $q=2$
by checking \eqref{eq:rankcriterion} directly.
\end{proof}

\section{The construction in dimensions coprime to six}\label{sec:coprime}

We now prove \cref{thm:coprime}. Together with \cref{thm:four}
and the dimension-three construction, this gives all the cases
needed in \cref{thm:main}. We again use \cref{prop:rank-code}.
Since $U_s$ is an $\F_{q^r}$-subspace, the rank condition becomes a
condition on sums of $\F_{q^r}$-subspaces
(\cref{lem:complement-criterion}).

For this construction, let $r\ge5$ satisfy
$\gcd(r,6)=1$, and put
\[
 J=\F_{q^r},\qquad E=\F_{q^{2r}},\qquad L=\F_{q^{6r}}.
\]
Thus $E=\F_{q^2}J$ and $L=\F_{q^6}J$. The fields
$\F_{q^6}$ and $J$ are linearly disjoint over $\F_q$.
In particular, an $\F_q$-basis of $J$ is an
$\F_{q^6}$-basis of $L$, and $L$ represents $V(r,q^6)$.
For an $\F_q$-subspace $C\le\F_{q^6}$, write
\[
 CJ=\langle C\rangle_J\le L.
\]
Linear disjointness gives $\dim_J(CJ)=\dim_{\F_q}C$.

Fix $\varepsilon\in\F_{q^6}\setminus\F_{q^2}$ and
$s\in J\setminus\F_q$. Since $\F_{q^6}/\F_{q^2}$ has
degree three, $1,\varepsilon,\varepsilon^2$ is a basis over
$\F_{q^2}$. The degrees of $\F_{q^6}/\F_{q^2}$ and
$E/\F_{q^2}$ are coprime, so these elements also form an
$E$-basis of $L$. In particular,
$\F_{q^6}\cap E=\F_{q^2}$.

We will use the fact that $s$ and $s^2$ have the same degree
over $\F_q$, and that this degree is at least five. Indeed,
$[\F_q(s):\F_q(s^2)]\le2$, and this degree divides the odd
integer $r$, so it is one. The common degree is a divisor of
$r$ greater than one. Since $\gcd(r,6)=1$, it is at least five.

For $x\in E$ write $\overline{x}=x^{q^r}$. This is the
nontrivial automorphism of $E/J$; it fixes $J$ and restricts
to $x\mapsto x^q$ on $\F_{q^2}$. On vectors and matrices,
the bar acts entrywise. Choose
$\kappa\in\F_{q^2}\setminus\F_q$ and put
\begin{equation}\label{eq:coprime-basis}
 D_s=1+\varepsilon s,\qquad
 N_s=\kappa+\overline{\kappa}\varepsilon s.
\end{equation}
Here $E=J\oplus\kappa J=J\oplus\overline{\kappa}J$.

\begin{lemma}\label{lem:coprime-dimension}
The subspace $U_s$ in \eqref{eq:coprime-family} satisfies
\[
 U_s=D_sJ\oplus N_sJ.
\]
It has dimension $2r$ over $\F_q$ and spans $L$ over $\F_{q^6}$.
\end{lemma}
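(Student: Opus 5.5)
The plan is to parametrise $E$ as $J\oplus\kappa J$ and compute the image of each summand under the map $x\mapsto x+\varepsilon s\overline{x}$, which is $J$-linear because $\varepsilon s\in L$ and $\overline{x}=x^{q^r}$ is $J$-linear on $E$. Writing $x=a+\kappa c$ with $a,c\in J$, we have $\overline{x}=a+\overline{\kappa}c$. Hence
\[
x+\varepsilon s\overline{x}=a(1+\varepsilon s)+c(\kappa+\overline{\kappa}\varepsilon s)=aD_s+cN_s .
\]
This gives $U_s=D_sJ+N_sJ$ immediately.

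For directness and the dimension count, I will show that $D_s,N_s$ are $J$-linearly independent. Equivalently, the $J$-linear map $x\mapsto x+\varepsilon s\overline{x}$ on $E$ is injective, which yields $\dim_{\F_q}U_s=\dim_{\F_q}E=2r$. Suppose $x+\varepsilon s\overline{x}=0$ with $x\ne0$. Then $\varepsilon=-x/(s\overline{x})\in E$, since $s\in J\subseteq E$. So $\varepsilon\in\F_{q^6}\cap E=\F_{q^2}$, contradicting the choice of $\varepsilon$. This uses the already noted fact that $\F_{q^6}\cap E=\F_{q^2}$, coming from the coprimality of degrees.

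For spanning over $\F_{q^6}$, I will note that $\F_{q^6}U_s\supseteq\F_{q^6}D_sJ$, which is an $\F_{q^6}J=L$-subspace. Indeed, $\F_{q^6}\cdot D_sJ=D_s\cdot(\F_{q^6}J)=D_sL=L$ as long as $D_s\ne0$. And $D_s\ne0$ because $\varepsilon s=-1$ would put $\varepsilon=-s^{-1}\in J\cap\F_{q^6}=\F_q$, contradicting $\varepsilon\notin\F_{q^2}$. Here I am using that the $\F_{q^6}$-span is closed under multiplication by $J$, as $U_s$ is a $J$-space, so the span is an $L$-subspace of $L$ containing a nonzero element. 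I expect no real obstacle. The only point needing care is the injectivity step, which rests on linear disjointness and $\F_{q^6}\cap E=\F_{q^2}$.
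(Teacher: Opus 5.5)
Your proposal is correct and follows the paper's proof essentially step for step: the same decomposition $x=a+\kappa c$ giving $U_s=D_sJ+N_sJ$, the same injectivity argument via $\varepsilon\in\F_{q^6}\cap E=\F_{q^2}$, and the same spanning argument via $\langle D_sJ\rangle_{\F_{q^6}}=D_sL=L$. The only difference is cosmetic. You check $D_s\ne0$ directly from $J\cap\F_{q^6}=\F_q$, whereas the paper reads it off from injectivity, since $D_s$ is the image of $x=1$.
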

\begin{proof}
Write $x=a+\kappa b$ with $a,b\in J$. Then
\[
 x+\varepsilon s\overline{x}
 =a(1+\varepsilon s)+b(\kappa+\overline{\kappa}\varepsilon s)
 =aD_s+bN_s,
\]
so $U_s=D_sJ+N_sJ$. If a nonzero $x\in E$ satisfies
$x+\varepsilon s\overline{x}=0$, then
$\varepsilon=-x/(s\overline{x})\in E$, contrary to
$\F_{q^6}\cap E=\F_{q^2}$.
Hence the map defining $U_s$ is injective, its image has
dimension $2r$ over $\F_q$, and the sum is direct.
Finally, injectivity gives $D_s\ne0$, and
\[
 \langle U_s\rangle_{\F_{q^6}}
 \supseteq\langle D_sJ\rangle_{\F_{q^6}}=D_sL=L.
\]
\end{proof}

The subspace $U_s$ depends only on $\varepsilon s$.
The choice of $\kappa$ specifies a $J$-basis and does not
change the subspace.

\begin{remark}
If $s$ has degree $r$ over $\F_q$, write its minimal polynomial as
\[
 m_s(X)=X^r+m_{r-1}X^{r-1}+\cdots+m_1X+m_0\in\F_q[X].
\]
In the $\F_{q^6}$-basis $1,s,\ldots,s^{r-1}$ of $L$,
the elements of $U_s$ have coordinates
\begin{equation}\label{eq:coprime-coordinates}
 \begin{pmatrix}
 x_0-\varepsilon m_0x_{r-1}^q\\
 x_1+\varepsilon(x_0^q-m_1x_{r-1}^q)\\
 \vdots\\
 x_{r-1}+\varepsilon(x_{r-2}^q-m_{r-1}x_{r-1}^q)
 \end{pmatrix},
 \qquad x_0,\ldots,x_{r-1}\in\F_{q^2}.
\end{equation}
Indeed, substitute $x=\sum_{i=0}^{r-1}x_is^i$ in
\eqref{eq:coprime-family}, use $\overline{x_i}=x_i^q$, and
reduce $s^r$ by $m_s(s)=0$.
\end{remark}

We next express the rank condition in \cref{prop:rank-code}
using subspaces of $L$ over $J$.

\begin{lemma}\label{lem:complement-criterion}
Let $w_1,w_2\in L\setminus\{0\}$ and suppose that $w_1J+w_2J$ has dimension two
over $J$. Every nonzero $\F_{q^6}$-linear functional has
rank at least four over $\F_q$ on $w_1J+w_2J$ if and only if
\[
 w_1(CJ)\cap w_2(CJ)=\{0\}
\]
for every three-dimensional $\F_q$-subspace $C\le\F_{q^6}$.
\end{lemma}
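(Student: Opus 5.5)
We want to prove the following: every nonzero $\F_{q^6}$-linear functional $\ell$ has rank at least four on $U=w_1J+w_2J$ if and only if $w_1(CJ)\cap w_2(CJ)=\{0\}$ for every three-dimensional $\F_q$-subspace $C\le\F_{q^6}$. The plan is to translate ``rank at most three'' into ``the image lies in some three-dimensional $\F_q$-subspace $C$'', and then use a trace-adjoint argument to convert the inclusion $\ell(U)\subseteq C$ into an intersection statement in $L$.

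\emph{Step 1: describe the functionals.} By nondegeneracy of $\Tr_{q^{6r}/q^6}$, every $\F_{q^6}$-linear functional is $\ell_a(x)=\Tr_{q^{6r}/q^6}(ax)$ for a unique $a\in L$. Since $U$ is $\F_q$-spanned by $w_1J$ and $w_2J$, the rank of $\ell_a$ on $U$ is at most three if and only if
\[
 \Tr_{q^{6r}/q^6}(aw_1J)+\Tr_{q^{6r}/q^6}(aw_2J)\subseteq C
\]
for some three-dimensional $\F_q$-subspace $C$.

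\emph{Step 2: rewrite each inclusion.} We use the pairing $(x,y)\mapsto\Tr_{q^{6r}/q}(xy)$ on $L$. The linear disjointness of $\F_{q^6}$ and $J$ gives the tensor decomposition $L=\F_{q^6}\otimes_{\F_q}J$, and under it $\Tr_{q^{6r}/q^6}$ restricted to $J$ corresponds to $\Tr_{q^r/q}$. From this, the key claim is
\[
 \Tr_{q^{6r}/q^6}(yJ)\subseteq C \iff y\in (C^{\perp})^{\perp}J\cdots
\]
More precisely, $\Tr_{q^{6r}/q^6}(yJ)\subseteq C$ holds if and only if $y\in C'J$, where $C'$ is determined by $C$. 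To verify this, write $y=\sum_i c_i\otimes j_i$ with the $j_i$ forming an $\F_q$-basis of $J$. Then $\Tr_{q^{6r}/q^6}(yj)=\sum_i c_i\Tr_{q^r/q}(j_ij)$. As $j$ ranges over $J$, these values fill the $\F_q$-span of the $c_i$, because the trace form on $J$ is nondegenerate. Hence the inclusion holds if and only if every $c_i\in C$, that is, if and only if $y\in CJ$. So $C'=C$.

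\emph{Step 3: assemble the equivalence.} By Step 2, the rank of $\ell_a$ is at most three exactly when $aw_1\in CJ$ and $aw_2\in CJ$, i.e.\ when $a\in w_1^{-1}CJ\cap w_2^{-1}CJ$. Multiplying by $w_1w_2$ shows this intersection is nonzero if and only if $w_2(CJ)\cap w_1(CJ)\ne\{0\}$. Therefore a nonzero $a$ with rank at most three exists for some $C$ precisely when the intersection is nontrivial for some $C$.

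\emph{Step 4: dimensions smaller than three.} If the image $\ell_a(U)$ has dimension less than three, we enlarge it to a three-dimensional $C$, which exists since $\dim_{\F_q}\F_{q^6}=6$. So restricting to three-dimensional $C$ loses nothing.

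The main point requiring care is Step 2, the claim that $\Tr_{q^{6r}/q^6}(yJ)\subseteq C$ if and only if $y\in CJ$. This rests on the linear disjointness of $\F_{q^6}$ and $J$, together with the compatibility of $\Tr_{q^{6r}/q^6}$ with $\Tr_{q^r/q}$ under the identification $L=\F_{q^6}\otimes_{\F_q}J$. The hypothesis $\dim_J(w_1J+w_2J)=2$ is not used in this argument; it is only needed later, to ensure that $\dim_{\F_q}U=2r$ so that \cref{prop:rank-code} applies.
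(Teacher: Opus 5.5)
Your argument is correct, but it takes a different route from the paper's. The paper passes to the adjoint $\Phi_a\colon\F_{q^6}\to J^2$, $y\mapsto(\Tr_{q^{6r}/q^r}(aw_1y),\Tr_{q^{6r}/q^r}(aw_2y))$. Rank at most three means that $\ker\Phi_a$, which is the annihilator of the image $\ell_a(U)$, contains a three-dimensional $C$. By $J$-linearity this says $a\in(w_1(CJ)+w_2(CJ))^\perp$ for the trace pairing of $L$ over $J$. A dimension count ($3+3=6$ over $J$) then turns ``the sum is proper'' into ``the intersection is nonzero''. So in the paper $C$ annihilates the image, while in your proof $C$ contains it.

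You instead compute the image outright. In an $\F_q$-basis $(j_i)$ of $J$, which is an $\F_{q^6}$-basis of $L$, the set $\Tr_{q^{6r}/q^6}(yJ)$ is exactly the $\F_q$-span of the coordinates of $y$, so it lies in $C$ if and only if $y\in CJ$. The condition becomes $a\in w_1^{-1}(CJ)\cap w_2^{-1}(CJ)$, and the intersection appears directly after multiplying by $w_1w_2$, with no orthogonal complements in $L$ and no dimension count. This also describes $\ell_a(U)$ concretely as the $\F_q$-span of the $\F_{q^6}$-coordinates of $aw_1$ and $aw_2$.

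The price is the identity $\Tr_{q^{6r}/q^6}(j)=\Tr_{q^r/q}(j)$ for $j\in J$, which you assert without proof. It holds because $\gcd(r,6)=1$: the map $k\mapsto 6k$ permutes the residues modulo $r$, so $\sum_{k=0}^{r-1}j^{q^{6k}}=\sum_{k=0}^{r-1}j^{q^k}$. The paper's route avoids this identity, using only transitivity of the trace, the adjoint facts from \cref{sec:preliminaries} and $\dim_J(CJ)=3$.

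You are right that the hypothesis $\dim_J(w_1J+w_2J)=2$ plays no role in either argument. The stray display involving $(C^{\perp})^{\perp}J$ in Step~2 should simply be deleted.
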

\begin{proof}
Every $\F_{q^6}$-linear functional on $L$ has the form
$z\mapsto\Tr_{q^{6r}/q^6}(az)$ for a unique $a\in L$.
Since $w_1J$ spans $L$ over $\F_{q^6}$, its restriction to
$w_1J+w_2J$ is nonzero whenever $a\ne0$.
In the coordinates $w_1x_1+w_2x_2$, the restriction is
\[
 M_a:J^2\longrightarrow\F_{q^6},\qquad
 (x_1,x_2)\longmapsto\Tr_{q^{6r}/q^6}(a(w_1x_1+w_2x_2)).
\]
Use the trace pairing on $\F_{q^6}/\F_q$ and the pairing
\[
 ((x_1,x_2),(x_1',x_2'))\longmapsto\Tr_{q^r/q}(x_1x_1'+x_2x_2')
\]
on $J^2$. The adjoint is
\[
 \Phi_a:\F_{q^6}\longrightarrow J^2,\qquad
 y\longmapsto
 \bigl(\Tr_{q^{6r}/q^r}(aw_1y),\Tr_{q^{6r}/q^r}(aw_2y)\bigr).
\]
Indeed, transitivity of the trace gives
\[
\begin{aligned}
 \Tr_{q^6/q}(yM_a(x_1,x_2))
 &=\Tr_{q^{6r}/q}(ay(w_1x_1+w_2x_2))\\
 &=\Tr_{q^r/q}\bigl(x_1\Tr_{q^{6r}/q^r}(aw_1y)
                  +x_2\Tr_{q^{6r}/q^r}(aw_2y)\bigr).
\end{aligned}
\]
The pairings are nondegenerate, so
\[
 \operatorname{rank}_{\F_q}M_a
 =\operatorname{rank}_{\F_q}\Phi_a
 =6-\dim_{\F_q}\ker\Phi_a.
\]
Thus a nonzero functional has rank at most three if and only
if there are $a\in L\setminus\{0\}$ and a three-dimensional
$\F_q$-subspace $C\le\F_{q^6}$ with $C\subseteq\ker\Phi_a$.
By $J$-linearity of the trace, this is equivalent to
\[
 0\ne a\in\bigl(w_1(CJ)+w_2(CJ)\bigr)^\perp,
\]
where the orthogonal complement is taken for the trace pairing
on $L/J$. Such an $a$ exists if and only if the sum is proper.
Each summand has dimension three over $J$, whereas $L$ has
dimension six. The sum is therefore proper if and only if the
intersection is nonzero.
\end{proof}

\section{Complementarity}\label{sec:complementarity}

Let $C\le\F_{q^6}$ have dimension three over $\F_q$.
We prove that $D_s(CJ)\cap N_s(CJ)=\{0\}$.
As in the proof of \cref{thm:bad}, we distinguish the two
possible dimensions of $\F_{q^2}C$.

\begin{lemma}\label{lem:coprime-rankthree}
If $\F_{q^2}C=\F_{q^6}$, then
\[
 D_s(CJ)\cap N_s(CJ)=\{0\}.
\]
\end{lemma}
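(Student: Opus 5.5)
The plan is to turn a nonzero element of $D_s(CJ)\cap N_s(CJ)$ into a vanishing determinant, and to show that this determinant is $f(s^2)$ for a nonzero $f\in\F_q[X]$ of degree at most three with $f(0)=1$. Since $s^2$ has degree at least five over $\F_q$, $f(s^2)=0$ is then impossible, which proves the lemma.

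\textbf{Step 1: coordinates.} Choose an $\F_q$-basis $c_1,c_2,c_3$ of $C$. Since $\F_{q^2}C=\F_{q^6}$, it is an $\F_{q^2}$-basis of $\F_{q^6}$. Because $[\F_{q^6}:\F_{q^2}]=3$ and $[E:\F_{q^2}]=r$ are coprime, it is also an $E$-basis of $L$, exactly as for $1,\varepsilon,\varepsilon^2$ in \cref{sec:coprime}. Thus $CJ$ consists of the elements with coordinate vectors in $J^3$. Let $M\in\F_{q^2}^{3\times3}$ be the matrix of multiplication by $\varepsilon$ on $\F_{q^6}$ in this basis. Suppose $D_s x=N_s y$ with $x,y\in CJ$; identify $x,y$ with their coordinate vectors in $J^3$. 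Comparing $E$-coordinates gives
\[
 (I+sM)x=(\kappa I+\overline{\kappa}sM)y .
\]
Applying the bar, which fixes $J$ and $s$, gives the companion system
\[
 (I+s\overline M)x=(\overline\kappa I+\kappa s\overline M)y .
\]

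\textbf{Step 2: descent to $E$.} Conversely, any solution $(x,y)\in E^3\times E^3$ of these two systems yields one in $J^3\times J^3$. The solution space is stable under the bar, which swaps the two systems, and a bar-stable $E$-subspace is spanned by bar-fixed vectors. So a nonzero intersection exists if and only if the $6\times6$ matrix
\[
 \begin{pmatrix} I+sM & -(\kappa I+\overline\kappa sM)\\ I+s\overline M & -(\overline\kappa I+\kappa s\overline M)\end{pmatrix}
\]
is singular. Adding $\kappa$ times the first block column to the second turns the second block column into $\bigl((\kappa-\overline\kappa)sM,\ (\kappa-\overline\kappa)I\bigr)^{\mathsf T}$. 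Factoring out $(\kappa-\overline\kappa)^3\ne0$ and using $\det\begin{pmatrix}A&B\\ C'&I\end{pmatrix}=\det(A-BC')$, the determinant becomes a nonzero multiple of
\[
 \det\bigl(I+sM-sM(I+s\overline M)\bigr)=\det(I-s^2M\overline M).
\]

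\textbf{Step 3: the polynomial has coefficients in $\F_q$.} Put $f(X)=\det(I-XM\overline M)\in\F_{q^2}[X]$, which has degree at most three and $f(0)=1$. Applying the bar entrywise gives $\det(I-X\overline M M)$. This equals $f(X)$, because $AB$ and $BA$ have the same characteristic polynomial. Hence $f\in\F_q[X]$ and $f\ne0$. An intersection point would force $f(s^2)=0$, contradicting $[\F_q(s^2):\F_q]\ge5$ (shown in \cref{sec:coprime}).

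The main obstacle I expect is Step 2. One must check carefully that bar-stability lets an $E$-solution be replaced by a $J$-rational one, and that the block elimination is legitimate over the commutative ring $E[s]$, with $s$ a scalar in $J$. The column operation avoids inverting $\kappa I+\overline\kappa sM$, and the last block is exactly $I$, so the Schur-complement formula applies without commutativity assumptions on $M$ and $\overline M$.
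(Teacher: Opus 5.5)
Your proof is correct and is essentially the paper's argument. The paper sets $z=\mathbf x+\overline{\kappa}\mathbf y$ to rewrite the relation as $\overline z+sAz=0$, then conjugates and substitutes to get $(I_3-s^2\overline{A}A)z=0$, and obtains $\det(I_3-X\overline{A}A)\in\F_q[X]$ from $\overline{\overline{A}A}=A\overline{A}=A(\overline{A}A)A^{-1}$; your block column operation and Schur complement carry out exactly this elimination (your $x-\kappa y$ is the conjugate of the paper's $z$ once the sign convention for $y$ is matched). The descent you flag as the main obstacle in Step 2 is not needed: the lemma uses only the trivial implication that a nonzero $J$-rational solution makes the $6\times6$ matrix singular.
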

\begin{proof}
Choose an $\F_q$-basis $c_1,c_2,c_3$ of $C$.
It is an $\F_{q^2}$-basis of $\F_{q^6}$ and an $E$-basis
of $L$. Put
\[
 \varphi(z)=c_1z_1+c_2z_2+c_3z_3\qquad(z\in E^3),
\]
and let $A\in\GL_3(q^2)$ be the matrix of multiplication by
$\varepsilon$ in this basis, so that
$\varepsilon\varphi(z)=\varphi(Az)$.
Here $CJ=\varphi(J^3)$, and the coordinatewise involution
$z\mapsto\overline z$ fixes $J^3$.

Suppose that $D_sx+N_sy=0$ with $x,y\in CJ$.
Write $x=\varphi(\mathbf x)$ and $y=\varphi(\mathbf y)$ for
$\mathbf x,\mathbf y\in J^3$, and set $z=\mathbf x+\overline{\kappa}\mathbf y$. Since
$\overline z=\mathbf x+\kappa\mathbf y$, the equation becomes
\begin{equation}\label{eq:coprime-semilinear}
 \overline z+sAz=0.
\end{equation}
Applying the involution and substituting gives
\begin{equation}\label{eq:coprime-normmatrix}
 (I_3-s^2\overline A A)z=0.
\end{equation}
Put $B=\overline A A$. Then
\[
 \overline B=A\overline A=ABA^{-1},
\]
so the characteristic polynomial of $B$ is fixed by the
involution. Its coefficients lie in $\F_{q^2}$ and hence
in $\F_q$. Consequently,
\[
 P_A(X)=\det(I_3-XB)\in\F_q[X]
\]
has degree at most three and constant term one.
The degree of $s^2$ over $\F_q$ is at least five, so
$P_A(s^2)\ne0$. Equation~\eqref{eq:coprime-normmatrix}
therefore gives $z=0$. Since
$E=J\oplus\overline{\kappa}J$, we obtain $\mathbf x=\mathbf y=0$, and
hence $x=y=0$.
\end{proof}

Suppose now that $\F_{q^2}C$ has dimension two, and put
\[
 W_\varepsilon=\F_{q^2}\oplus\F_{q^2}\varepsilon.
\]
By \cref{lem:trace-planes}, a scalar multiple of
$\F_{q^2}C$ is $W_\varepsilon$. Multiplying $C$ by
$h\in\F_{q^6}^*$ does not change the required condition,
since
\[
 D_s((hC)J)=hD_s(CJ),\qquad
 N_s((hC)J)=hN_s(CJ).
\]
We may therefore assume that $C\subseteq W_\varepsilon$.
For $\xi\in\F_{q^2}$ put
\begin{equation}\label{eq:normal-finite}
 C_\xi=\langle1,\xi+\varepsilon,
                  \kappa(\xi+\varepsilon)\rangle_{\F_q},
\end{equation}
and put
\begin{equation}\label{eq:normal-infinity}
 C_\infty=\langle1,\kappa,\varepsilon\rangle_{\F_q}.
\end{equation}

\begin{lemma}\label{lem:normalforms}
Every three-dimensional $\F_q$-subspace of $W_\varepsilon$
is a scalar multiple, by an element of $\F_{q^2}^*$, of
one of the spaces in \eqref{eq:normal-finite} and
\eqref{eq:normal-infinity}.
\end{lemma}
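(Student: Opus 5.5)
Every three-dimensional $\F_q$-subspace $C\le W_\varepsilon$ meets the $\F_q$-line $\F_{q^2}\varepsilon$'s complement in a controlled way, so I would sort by the intersection $C\cap\F_{q^2}$. Since $W_\varepsilon$ has dimension four over $\F_q$ and $\F_{q^2}$ has dimension two, we get $\dim_{\F_q}(C\cap\F_{q^2})\ge1$. The plan is to split into the case $\F_{q^2}\subseteq C$ and the case $\dim_{\F_q}(C\cap\F_{q^2})=1$.

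\emph{Case $\F_{q^2}\subseteq C$.} Then $C=\F_{q^2}+\F_q w$ for some $w\in W_\varepsilon\setminus\F_{q^2}$. Write $w=\mu+\lambda\varepsilon$ with $\lambda\in\F_{q^2}^*$, $\mu\in\F_{q^2}$. Subtracting $\mu\in C$, we may take $w=\lambda\varepsilon$. Then $\lambda^{-1}C=\langle\lambda^{-1},\lambda^{-1}\kappa,\varepsilon\rangle_{\F_q}=\F_{q^2}+\F_q\varepsilon=C_\infty$, since $\lambda^{-1}\F_{q^2}=\F_{q^2}$.

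\emph{Case $C\cap\F_{q^2}=\F_q\gamma$ with $\gamma\in\F_{q^2}^*$.} Replacing $C$ by $\gamma^{-1}C$, assume $C\cap\F_{q^2}=\F_q$. I would then consider the projection $\pi:W_\varepsilon\to\F_{q^2}$, $\mu+\lambda\varepsilon\mapsto\lambda$, whose kernel is $\F_{q^2}$. The restriction of $\pi$ to $C$ has kernel $C\cap\F_{q^2}=\F_q$, so $\pi(C)$ has dimension two and equals $\F_{q^2}$. Pick $w\in C$ with $\pi(w)=1$, so $w=\xi+\varepsilon$ for some $\xi\in\F_{q^2}$. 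Pick $w'\in C$ with $\pi(w')=\kappa$, so $w'=\kappa(\xi+\varepsilon)+\mu$ for some $\mu\in\F_{q^2}$. Since $w'-\kappa w=\mu\in C\cap\F_{q^2}=\F_q$ and $1\in C$, we get $\kappa w\in C$. Hence $C\supseteq\langle1,\xi+\varepsilon,\kappa(\xi+\varepsilon)\rangle_{\F_q}=C_\xi$, and equality follows by dimension. That $C_\xi$ is three-dimensional is clear from $\pi$: the images of the three generators are $0,1,\kappa$, and $1\ne0$.

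The main point to check is that $\xi$ is well defined. It is not actually needed for the statement, but I would note that different choices of $w$ differ by elements of $\F_q$, which only shifts $\xi$ by $\F_q$. This is harmless because existence is all that is claimed. No serious obstacle is expected; the only delicate step is showing $\kappa w\in C$, which is handled by the intersection argument above.
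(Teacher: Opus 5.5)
There is a genuine gap, in the step that proves $\kappa w\in C$. Your case $\F_{q^2}\subseteq C$ and your reduction to $C\cap\F_{q^2}=\F_q$ with $\pi(C)=\F_{q^2}$ are fine.

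\textbf{The gap.} The argument for $\kappa w\in C$ is circular. You only know $w'\in C$, so $\mu=w'-\kappa w$ lies in $C$ exactly when $\kappa w\in C$, which is what you are trying to prove. The claim is in fact false for most choices of $w$. Suppose $\kappa w\in C$ for one $w$ with $\pi(w)=1$, and let $t\in\F_q^*$. Then $\kappa(w+t)=\kappa w+\kappa t\notin C$, because $\kappa t\in\F_{q^2}\setminus\F_q$ and $C\cap\F_{q^2}=\F_q$. So exactly one of the $q$ candidates $w=\xi+\varepsilon$ in $C$ works. Your closing remark that shifting $\xi$ by $\F_q$ is harmless is therefore wrong. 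By the same argument, using $C_\xi\cap\F_{q^2}=\F_q$, one gets $C_\xi\ne C_{\xi+t}$ for $t\in\F_q^*$.

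\textbf{The repair.} Keep $w=\xi+\varepsilon$ and $w'$ as you chose them. Write $\mu=w'-\kappa w\in\F_{q^2}$ as $\mu=m_0+\kappa m_1$ with $m_0,m_1\in\F_q$, using $\F_{q^2}=\F_q\oplus\kappa\F_q$. Then $w+m_1\in C$, $\pi(w+m_1)=1$, and $\kappa(w+m_1)=w'-m_0\in C$, since $1\in C$. Hence $C\supseteq C_{\xi+m_1}$, and equality follows by dimension. A counting argument also works. There are $q^2$ two-dimensional subspaces of $W_\varepsilon/\F_q$ not containing the point $\F_{q^2}/\F_q$. The $q^2$ spaces $C_\xi$, $\xi\in\F_{q^2}$, are pairwise distinct and each meets $\F_{q^2}$ in $\F_q$, so they exhaust these subspaces.

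\textbf{Comparison with the paper.} The paper never has to choose $w$. It writes $C$ as $\Tr_{q^2/q}(\lambda_1x_1+\lambda_2x_2)=0$ and normalises $(\lambda_1,\lambda_2)$ by scaling with $k\in\F_{q^2}^*$. This forces $\xi=-\lambda_2/\lambda_1$. Your case $\F_{q^2}\subseteq C$ corresponds to its case $\lambda_1=0$.
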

\begin{proof}
Write the elements of $W_\varepsilon$ as $x_1+x_2\varepsilon$
with $x_1,x_2\in\F_{q^2}$. An $\F_q$-hyperplane has an equation
\[
 \Tr_{q^2/q}(\lambda_1x_1+\lambda_2x_2)=0
\]
for some $(\lambda_1,\lambda_2)\ne(0,0)$. Multiplication of the
hyperplane by $k\in\F_{q^2}^*$ replaces the coefficients
by $(\lambda_1/k,\lambda_2/k)$.

Choose $\theta\in\F_{q^2}^*$ with
$\Tr_{q^2/q}(\theta)=0$. In characteristic two one can
take $\theta=1$; otherwise one can take
$\theta=\kappa-\kappa^q$. The nonzero map
$z\mapsto\Tr_{q^2/q}(\theta z)$ has kernel $\F_q$.

If $\lambda_1\ne0$, scale by $k=\lambda_1/\theta$ and set
$\xi=-\lambda_2/\lambda_1$. The equation is then
$x_1-\xi x_2\in\F_q$. Writing $x_1=t_0+\xi x_2$ and
$x_2=t_1+\kappa t_2$ with $t_0,t_1,t_2\in\F_q$ gives $C_\xi$.
If $\lambda_1=0$, scale by $k=\lambda_2/\theta$. The equation
becomes $x_2\in\F_q$, giving $C_\infty$.
\end{proof}

\begin{lemma}\label{lem:coprime-ranktwo}
For every $\xi\in\F_{q^2}\cup\{\infty\}$,
\[
 D_s(C_\xi J)\cap N_s(C_\xi J)=\{0\}.
\]
\end{lemma}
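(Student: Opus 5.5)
The plan is a direct coordinate computation. By the proof of \cref{lem:coprime-rankthree}, $1,\varepsilon,\varepsilon^2$ is an $E$-basis of $L$. I will expand the relation $D_sx+N_sy=0$, with $x,y\in C_\xi J$, in this basis. The resulting equations over $E$ should force $s^2\in\F_q$ unless $x=y=0$, which is impossible because $s^2$ has degree at least five over $\F_q$. Throughout, write $\overline{\,\cdot\,}$ for the $q^r$-Frobenius on $E$ and put $\delta=\kappa-\overline\kappa\neq0$.

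\emph{Finite $\xi$.} Since $E=J\oplus\kappa J$, we have
\[
 C_\xi J=\{t+(\xi+\varepsilon)w:\ t\in J,\ w\in E\}.
\]
Write $x=t+(\xi+\varepsilon)w$ and $y=t'+(\xi+\varepsilon)w'$, and expand
\[
 (1+\varepsilon s)x+(\kappa+\overline\kappa\varepsilon s)y=0
\]
in the basis $1,\varepsilon,\varepsilon^2$.
\begin{itemize}
\item The $\varepsilon^2$-coefficient is $s(w+\overline\kappa w')=0$, so $w=-\overline\kappa w'$.
\item Substituting this, the $1$-coefficient becomes $t+\kappa t'+\xi\delta w'=0$.
\item The $\varepsilon$-coefficient becomes $\delta w'+s(t+\overline\kappa t')=0$, because the terms $s\xi(w+\overline\kappa w')$ cancel.
\end{itemize}
Put $v=t+\overline\kappa t'\in E$. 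Since $t,t'\in J$, we have $\overline v=t+\kappa t'$. Eliminating $\delta w'$ gives $\overline v=s\xi v$. Applying the bar and substituting once more gives $v=s^2\xi\overline\xi\,v$.

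If $v\neq0$, then $s^2=\N_{q^2/q}(\xi)^{-1}\in\F_q$. This contradicts the fact, established before \cref{lem:coprime-dimension}, that $s^2$ has degree at least five over $\F_q$. The case $\xi=0$ is included here, since it gives $\overline v=0$ directly. Hence $v=0$. Because $E=J\oplus\overline\kappa J$, this forces $t=t'=0$. Then $w'=-sv/\delta=0$ and $w=0$, so $x=y=0$.

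\emph{The case $\xi=\infty$.} Here $C_\infty J=\{w+\varepsilon t:\ w\in E,\ t\in J\}$. Write $x=w+\varepsilon t$ and $y=w'+\varepsilon t'$. The $\varepsilon^2$-coefficient gives $s(t+\overline\kappa t')=0$, hence $t=t'=0$. The $1$-coefficient gives $w=-\kappa w'$. The $\varepsilon$-coefficient then reads $s(w+\overline\kappa w')=-s\delta w'=0$, so $w'=0$ and $w=0$. This case is in fact easier than the finite one and does not use the degree of $s$.

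I expect no serious obstacle; the step needing care is the bookkeeping in the finite case. One has to check that the $\xi$-terms in the $\varepsilon$-coefficient cancel exactly after substituting $w=-\overline\kappa w'$. One also has to use that $t,t'\in J$ are fixed by the bar, so that $t+\kappa t'$ is exactly $\overline v$. This collapses the system to the single semilinear equation $\overline v=s\xi v$, the rank-one analogue of \eqref{eq:coprime-semilinear}. From there the norm argument of \cref{lem:coprime-rankthree} applies verbatim, with the $1\times1$ matrix $\xi$ in place of $A$.
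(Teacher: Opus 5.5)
Your proof is correct and follows the paper's argument. You use the same parametrisation $C_\xi J=J+(\xi+\varepsilon)E$ and obtain the same three coefficient equations, and your relation $\overline v=s\xi v$ is exactly the paper's equation $(1-\xi s)a_1+(\kappa-\overline\kappa\xi s)a_2=0$. The only difference is in the last step: you apply the bar once more to get $v=s^2\N_{q^2/q}(\xi)v$, whereas the paper checks the $J$-independence of $f_1,f_2$ through $f_1\overline{f_2}-\overline{f_1}f_2=(\overline\kappa-\kappa)(1-\N_{q^2/q}(\xi)s^2)$, and both routes reduce to $s^2\notin\F_q$.
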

\begin{proof}
First let $\xi\in\F_{q^2}$. We have
\[
 C_\xi J=\{a+\xi e+e\varepsilon:a\in J,\ e\in E\}.
\]
Suppose that $D_sX+N_sY=0$, where
\[
 X=a_1+\xi e_1+e_1\varepsilon,\qquad
 Y=a_2+\xi e_2+e_2\varepsilon,
 \qquad a_1,a_2\in J,\quad e_1,e_2\in E.
\]
Comparing coefficients in the $E$-basis
$1,\varepsilon,\varepsilon^2$ gives
\begin{align}
 e_1+\overline{\kappa}e_2&=0,\label{eq:coprime-r2a}\\
 a_1+\kappa a_2+\xi(e_1+\kappa e_2)&=0,\label{eq:coprime-r2b}\\
 e_1+\kappa e_2+s(a_1+\overline{\kappa}a_2)&=0.\label{eq:coprime-r2c}
\end{align}
The first equation comes from the coefficient of
$\varepsilon^2$. The coefficient of $\varepsilon$ is
\[
 e_1+\kappa e_2+s(a_1+\overline{\kappa}a_2)
       +\xi s(e_1+\overline{\kappa}e_2),
\]
which gives the last equation after using
\eqref{eq:coprime-r2a}.

Substituting \eqref{eq:coprime-r2c} in
\eqref{eq:coprime-r2b} yields
\begin{equation}\label{eq:coprime-ac}
 (1-\xi s)a_1+(\kappa-\overline{\kappa}\xi s)a_2=0.
\end{equation}
Put $f_1=1-\xi s$ and $f_2=\kappa-\overline{\kappa}\xi s$.
Here $f_1\ne0$, since $J\cap\F_{q^2}=\F_q$ and
$s\notin\F_q$. The elements $f_1,f_2$ are dependent over $J$
if and only if $f_2/f_1$ is fixed by the involution, that is,
$f_1\overline{f_2}-\overline{f_1}f_2=0$. Expanding gives
\[
\begin{aligned}
 f_1\overline{f_2}-\overline{f_1}f_2
 &=(1-\xi s)(\overline{\kappa}-\kappa\overline{\xi}s)
   -(1-\overline{\xi}s)(\kappa-\overline{\kappa}\xi s)\\
 &=(\overline{\kappa}-\kappa)
       \bigl(1-\N_{q^2/q}(\xi)s^2\bigr).
\end{aligned}
\]
The first factor is nonzero. The second is one if $\xi=0$;
otherwise its vanishing would imply $s^2\in\F_q$.
Thus $f_1,f_2$ are independent over $J$, and
\eqref{eq:coprime-ac} gives $a_1=a_2=0$.
Equations~\eqref{eq:coprime-r2a} and
\eqref{eq:coprime-r2c} then give $e_1=e_2=0$.

For $\xi=\infty$, write
\[
 C_\infty J=\{e+a\varepsilon:e\in E,\ a\in J\}.
\]
Let $X=e_1+a_1\varepsilon$ and $Y=e_2+a_2\varepsilon$ satisfy
$D_sX+N_sY=0$. The coefficient of $\varepsilon^2$ gives
$a_1+\overline{\kappa}a_2=0$. Since $a_1,a_2\in J$ and
$\overline{\kappa}\notin J$, we have $a_1=a_2=0$.
The other coefficients give
\[
 e_1+\kappa e_2=0,\qquad s(e_1+\overline{\kappa}e_2)=0,
\]
and hence $e_1=e_2=0$.
\end{proof}

\begin{theorem}\label{thm:complement}
For every three-dimensional $\F_q$-subspace $C\le\F_{q^6}$,
\[
 D_s(CJ)\oplus N_s(CJ)=L.
\]
\end{theorem}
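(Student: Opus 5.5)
The plan is to reduce the direct-sum statement to the trivial-intersection statements already proved, by a dimension count over $J$. By linear disjointness of $\F_{q^6}$ and $J$ over $\F_q$, we have $\dim_J(CJ)=\dim_{\F_q}C=3$. Since $D_s\ne0$ and $N_s\ne0$, multiplication by either element is a $J$-linear bijection of $L$, so $D_s(CJ)$ and $N_s(CJ)$ are both three-dimensional $J$-subspaces of $L$. Here $N_s\ne0$ because $N_s=0$ would give $\varepsilon=-\kappa/(\overline{\kappa}s)\in E$, contrary to $\F_{q^6}\cap E=\F_{q^2}$; alternatively, $N_s$ is a nonzero element of $U_s$ by \cref{lem:coprime-dimension}. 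Since $\dim_J L=6$, the sum is all of $L$ exactly when the intersection is $\{0\}$. So it suffices to show $D_s(CJ)\cap N_s(CJ)=\{0\}$ for every three-dimensional $C$.

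For this we split according to $\dim_{\F_{q^2}}\F_{q^2}C\in\{2,3\}$. The span cannot have dimension $1$: then $C$ would lie in a one-dimensional $\F_{q^2}$-space, which has only dimension $2$ over $\F_q$.

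If $\F_{q^2}C=\F_{q^6}$, we apply \cref{lem:coprime-rankthree} directly.

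If $\F_{q^2}C$ is an $\F_{q^2}$-plane, we use \cref{lem:trace-planes} with $m=2$ to find $h\in\F_{q^6}^*$ such that $h\F_{q^2}C=W_\varepsilon$. This requires $W_\varepsilon$ to be a plane, which holds since $1,\varepsilon$ are independent over $\F_{q^2}$. Then $hC\subseteq W_\varepsilon$. Replacing $C$ by $hC$ multiplies both $D_s(CJ)$ and $N_s(CJ)$ by $h$, so the trivial-intersection condition is unchanged. By \cref{lem:normalforms}, $hC=kC_\xi$ for some $k\in\F_{q^2}^*$ and $\xi\in\F_{q^2}\cup\{\infty\}$. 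Rescaling once more by $k^{-1}$, which is again harmless by the same observation, \cref{lem:coprime-ranktwo} gives $D_s(C_\xi J)\cap N_s(C_\xi J)=\{0\}$. This completes the case.

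No step here is a real obstacle, since the substantive work lies in the two preceding lemmas. The only points needing care are the dimension count, which uses linear disjointness and the nonvanishing of $D_s$ and $N_s$, and the check that scalar rescaling commutes with the construction $C\mapsto CJ$. The latter holds because $(hC)J=h(CJ)$, as $J$-spans commute with multiplication by $h\in\F_{q^6}$. As a consequence, \cref{lem:complement-criterion} with $w_1=D_s$ and $w_2=N_s$, together with \cref{lem:coprime-dimension} and \cref{prop:rank-code}, then yields \cref{thm:coprime}.
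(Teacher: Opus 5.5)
Your proposal is correct and follows the paper's proof: the $J$-dimension count reduces the direct sum to a trivial intersection, \cref{lem:coprime-rankthree} handles the case $\F_{q^2}C=\F_{q^6}$, and otherwise one rescales into $W_\varepsilon$ and applies \cref{lem:normalforms,lem:coprime-ranktwo}. The details you add are exactly those the paper leaves implicit: that $N_s\ne0$, that a one-dimensional $\F_{q^2}$-span is impossible, and the second rescaling by $k^{-1}$.
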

\begin{proof}
If $\F_{q^2}C=\F_{q^6}$, apply
\cref{lem:coprime-rankthree}. Otherwise, rescale into
$W_\varepsilon$ and apply \cref{lem:normalforms,lem:coprime-ranktwo}.
In each case the intersection is trivial. Both summands have
dimension three over $J$, so their sum is $L$.
\end{proof}

\begin{proof}[Proof of \cref{thm:coprime}]
By \cref{lem:coprime-dimension}, $U_s$ has dimension $2r$
over $\F_q$ and spans $L$ over $\F_{q^6}$.
\Cref{thm:complement} and \cref{lem:complement-criterion}, applied
with $w_1=D_s$ and $w_2=N_s$, show that every nonzero $\F_{q^6}$-linear functional on $L$ has
rank at least four on $U_s$. The result now follows from
\cref{prop:rank-code}.
\end{proof}

\section{The associated MRD codes}\label{sec:codes}

Let $U$ satisfy the conditions of \cref{prop:rank-code}, let $G$ have
as columns an $\F_q$-basis of $U$, and let $H$ be a generator matrix of
$\mathcal C_U^\perp$. Let $U'$ be the $\F_q$-span of the columns of
$H$. Then $U'$ is, up to $\GL_r(q^6)$-equivalence, the Delsarte dual
of $U$ \cite[Section~3]{CMPZ2021}; in the terminology of
\cite[Definition~2.17]{MNT2023}, it is the system associated with
$\mathcal C_U^\perp$. Indeed, put $W=\F_q^{2r}\subseteq\F_{q^6}^{2r}$
and $\Gamma=\ker G$, kernels being taken on column vectors. Then
$\Gamma=\mathcal C_U^\perp$, and $\Gamma\cap W=\{0\}$ because the
columns of $G$ are $\F_q$-independent; $G$ maps $(W+\Gamma)/\Gamma$
onto $U$. Moreover $\Gamma^\perp=\mathcal C_U=\ker H$, so $H$ maps
$(W+\Gamma^\perp)/\Gamma^\perp$ onto $U'$. A nonzero
$x\in\F_q^{2r}$ with $Hx=0$ would be a codeword of $\mathcal C_U$ of
rank one, so $\dim_{\F_q}U'=2r$. Since $H$ has rank $r$, $U'$ spans
$\F_{q^6}^r$, and $\mathcal C_{U'}=\mathcal C_U^\perp$ is a
$[2r,r,4]_{q^6/q}$ MRD code. By \cref{prop:rank-code}, $U'$ is maximum
$2$-scattered in $V(r,q^6)$: for these parameters Delsarte duality
preserves the ambient dimension. This duality should not be confused
with the map $V\mapsto V^\perp$ of \cref{sec:orthogonality}.

By \cref{prop:rank-code}, both constructions give MRD codes.
We first make the code from $U_s$ explicit and determine its
duality. Fix an $\F_q$-basis $\beta_1,\ldots,\beta_r$ of $J$.
With respect to the basis
\[
 D_s\beta_1,\ldots,D_s\beta_r,
 N_s\beta_1,\ldots,N_s\beta_r
\]
of $U_s$, put
\begin{equation}\label{eq:coprime-code}
 \mathcal C_s=\{\mathbf w_a:a\in L\},
\end{equation}
where
\begin{equation}\label{eq:coprime-word}
\begin{aligned}
 \mathbf w_a={}&\bigl(\Tr_{q^{6r}/q^6}(aD_s\beta_1),\ldots,
                        \Tr_{q^{6r}/q^6}(aD_s\beta_r),\\
 &\hspace{9mm}\Tr_{q^{6r}/q^6}(aN_s\beta_1),\ldots,
                        \Tr_{q^{6r}/q^6}(aN_s\beta_r)\bigr).
\end{aligned}
\end{equation}

\begin{proposition}\label{prop:isodual}
The code $\mathcal C_s$ is a $[2r,r,4]_{q^6/q}$ MRD code
equivalent to its dual by a matrix in $\GL_{2r}(q)$.
The Delsarte dual of $U_s$ is equivalent to $U_s$ under an element of
$\GL_r(q)\le\GL_r(q^6)$; explicitly, it is $PU_s$, where $P$ is the
Gram matrix defined in the proof.
\end{proposition}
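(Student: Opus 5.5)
The plan is to compute $\mathcal C_s^\perp$ explicitly using the dual basis of $\beta_1,\ldots,\beta_r$, and to observe that its associated system is again $U_s$.

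First, \cref{prop:rank-code}, \cref{lem:coprime-dimension} and \cref{thm:coprime} already show that $\mathcal C_s$ is a $[2r,r,4]_{q^6/q}$ MRD code. Here $\mathcal C_s$ is the code $\mathcal C_{U_s}$ in the stated basis: every $\F_{q^6}$-linear functional on $L$ has the form $z\mapsto\Tr_{q^{6r}/q^6}(az)$.

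Next, let $\beta_1^*,\ldots,\beta_r^*$ be the dual basis of $J$ with respect to $\Tr_{q^r/q}$. By linear disjointness of $J$ and $\F_{q^6}$, the $\beta_i$ form an $\F_{q^6}$-basis of $L$. Moreover $\Tr_{q^{6r}/q^6}$ restricted to $J$ equals $\Tr_{q^r/q}$, so the $\beta_i^*$ are also the dual basis for the pairing $\Tr_{q^{6r}/q^6}$ on $L$. Expanding $x=\sum_i\Tr_{q^{6r}/q^6}(x\beta_i^*)\beta_i$ then gives the Parseval-type identity
\[
 \sum_{i=1}^r\Tr_{q^{6r}/q^6}(x\beta_i)\,\Tr_{q^{6r}/q^6}(y\beta_i^*)=\Tr_{q^{6r}/q^6}(xy)\qquad(x,y\in L).
\]
For $a'\in L$ I would set
\[
 \mathbf v_{a'}=\bigl(\Tr_{q^{6r}/q^6}(a'N_s\beta_i^*)\bigr)_{i\le r}\ \big|\ \bigl(-\Tr_{q^{6r}/q^6}(a'D_s\beta_i^*)\bigr)_{i\le r}.
\]
By the identity, $\mathbf w_a\cdot\mathbf v_{a'}=\Tr_{q^{6r}/q^6}(aa'D_sN_s)-\Tr_{q^{6r}/q^6}(aa'N_sD_s)=0$. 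The map $a'\mapsto\mathbf v_{a'}$ is injective: its kernel corresponds to $a'$ orthogonal to $U_s$, which spans $L$. Hence these words form an $r$-dimensional space, and it equals $\mathcal C_s^\perp$.

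The key observation is that $\mathbf v_{a'}$ is exactly the evaluation of the functional $\Tr_{q^{6r}/q^6}(a'\,\cdot)$ on the $\F_q$-basis $N_s\beta_1^*,\ldots,N_s\beta_r^*,-D_s\beta_1^*,\ldots,-D_s\beta_r^*$ of $N_sJ\oplus D_sJ=U_s$. Thus $\mathcal C_s^\perp$ is the code of the same system $U_s$ in another $\F_q$-basis. Writing $\beta_j^*=\sum_iP_{ij}\beta_i$, where $P\in\GL_r(q)$ is the inverse Gram matrix $(\Tr_{q^r/q}(\beta_i\beta_j))^{-1}$, the change of basis is given by the matrix
\[
 M=\begin{pmatrix}0&-P\\P&0\end{pmatrix}\in\GL_{2r}(q),
\]
with blocks ordered as in \eqref{eq:coprime-word}, and $\mathcal C_s^\perp=\mathcal C_sM$.

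Finally, for the Delsarte dual I would take as $H$ the matrix whose columns are the coordinate vectors of $N_s\beta_j^*$ and $-D_s\beta_j^*$ in the $\F_{q^6}$-basis $\beta_1,\ldots,\beta_r$ of $L$; this matrix generates $\mathcal C_s^\perp$ after identifying functionals with $a'$. Then the columns of $H$ span $PU_s$ in coordinates, where $P$ (or its transpose, depending on the coordinatisation) acts in $\GL_r(q)\le\GL_r(q^6)$. The main obstacle I expect is this bookkeeping: matching the coordinatisation of functionals used to define $H$ with the coordinate identification $L\cong\F_{q^6}^r$, so that the Gram matrix appears on the correct side and one obtains exactly $PU_s$ rather than a transpose.
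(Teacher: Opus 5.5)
Your proof is correct in substance, and it takes a genuinely different route from the paper's. The paper argues with matrices. In the basis $\beta_1,\dots,\beta_r$ it writes $G=(M_D\mid M_N)$ with $M_D=I_r+\varepsilon T$ and $M_N=\kappa I_r+\overline{\kappa}\varepsilon T$. Since these commute, $H=(-M_N^{\mathsf T}\mid M_D^{\mathsf T})$ generates $\mathcal C_s^\perp$. Self-adjointness of multiplication by $s$, i.e.\ $T^{\mathsf T}P=PT$ for the Gram matrix $P=(\Tr_{q^r/q}(\beta_i\beta_j))$, then gives $H=PGQ$ with $Q=\bigl(\begin{smallmatrix}0&P^{-1}\\-P^{-1}&0\end{smallmatrix}\bigr)$.

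You instead use the trace-dual basis $\beta_j^*$ and a Parseval identity. This shows directly that $\mathcal C_s^\perp$ is the code of the same system $U_s$, evaluated on the $\F_q$-basis $N_s\beta_j^*,-D_s\beta_j^*$; your matrix $M$ is exactly $-Q$. Your route needs one extra (correct) input that the paper avoids: $\Tr_{q^{6r}/q^6}$ restricts to $\Tr_{q^r/q}$ on $J$, so that the dual basis lies in $J$. In exchange it explains the isoduality conceptually, as a consequence only of the commutativity of $L$ and the symmetry of the trace form. It also exhibits $U_s$ itself as a representative of the Delsarte dual.

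Your last paragraph, however, misidentifies what your own $H$ gives. Its columns are the coordinate vectors of an $\F_q$-basis of $U_s$, taken in the basis $\beta_1,\dots,\beta_r$ that identifies $L$ with $\F_{q^6}^r$. So their $\F_q$-span is $U_s$ itself. No Gram matrix, inverse or transpose enters, and the bookkeeping obstacle you anticipate does not exist. Concretely, with $P$ the Gram matrix (the inverse of your $P$), your matrix is $H'=(M_NP^{-1}\mid -M_DP^{-1})=-GQ$, so the paper's generator matrix is $H=-PH'$. Generator matrices of $\mathcal C_s^\perp$ differ by left multiplication by elements of $\GL_r(q^6)$, so the Delsarte dual is only defined up to that equivalence. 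Your argument therefore already proves that the Delsarte dual is equivalent to $U_s$. To get the explicit representative $PU_s$ named in the statement, use the generator matrix $PH'$ in place of $H'$.
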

\begin{proof}
The parameters follow from \cref{thm:coprime,prop:rank-code}.
Let $T\in\F_q^{r\times r}$ be the matrix of multiplication
by $s$ on $J$ in the basis $\beta_1,\ldots,\beta_r$.
This is also an $\F_{q^6}$-basis of $L$. In this basis,
multiplication by $D_s$ and $N_s$ has matrices
\[
 M_D=I_r+\varepsilon T,\qquad
 M_N=\kappa I_r+\overline{\kappa}\varepsilon T.
\]
Their columns give the chosen $\F_q$-basis of $U_s$, so
$G=(M_D\mid M_N)$ generates $\mathcal C_s$.
The two matrices are invertible and commute. Hence
\[
 H=(-M_N^{\mathsf T}\mid M_D^{\mathsf T})
\]
has rank $r$ and satisfies $GH^{\mathsf T}=0$, so it
generates $\mathcal C_s^\perp$.

Put
\[
 P=(\Tr_{q^r/q}(\beta_i\beta_j))_{i,j=1}^r.
\]
The trace pairing is nondegenerate, so $P\in\GL_r(q)$.
Multiplication by $s$ is self-adjoint for this pairing,
giving $T^{\mathsf T}P=PT$. Consequently,
\[
 M_D^{\mathsf T}=PM_DP^{-1},\qquad
 M_N^{\mathsf T}=PM_NP^{-1}.
\]
It follows that
\begin{equation}\label{eq:isodual}
 H=PGQ,\qquad
 Q=\begin{pmatrix}0&P^{-1}\\-P^{-1}&0\end{pmatrix}
   \in\GL_{2r}(q).
\end{equation}
Thus $\mathcal C_s^\perp=\mathcal C_sQ$.

As recalled above, the $\F_q$-span of the columns of $H$
represents the Delsarte dual of $U_s$.
Equation~\eqref{eq:isodual}
identifies this span with $PU_s$, proving the last assertion.
\end{proof}

For the dimension-four construction, choosing an $\F_q$-basis
$x_1,\ldots,x_8$ of $W_0$ gives the code
\[
 \mathcal C_b=\{(c_{\alpha,\beta}(x_1),\ldots,
                    c_{\alpha,\beta}(x_8)):
                   \alpha,\beta\in\F_{q^{12}}\}.
\]
For every good $b$, this is an $[8,4,4]_{q^6/q}$ MRD code by
\cref{prop:rank-code}.

\begin{corollary}\label{cor:codes-all-r}
For every prime power $q$ and every integer $r\ge3$, there
exists an $\F_{q^6}$-linear $[2r,r,4]_{q^6/q}$ MRD code.
\end{corollary}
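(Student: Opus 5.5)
The plan is to deduce the corollary directly from \cref{thm:main} together with \cref{prop:rank-code}, so no new construction is needed. Fix a prime power $q$ and an integer $r\ge3$. By \cref{thm:main} there is a maximum $2$-scattered $\F_q$-subspace $U$ of $V(r,q^6)$ with $\dim_{\F_q}U=2r$. By the definition adopted from \cite{CMPZ2021}, a $2$-scattered subspace spans $V(r,q^6)$ over $\F_{q^6}$. Therefore $U$ meets all the hypotheses of \cref{prop:rank-code}.

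The equivalence (1)$\Leftrightarrow$(3) of \cref{prop:rank-code} then shows that $\mathcal C_U$ is a $[2r,r,4]_{q^6/q}$ MRD code. By construction $\mathcal C_U$ is $\F_{q^6}$-linear, being the image of the $\F_{q^6}$-linear evaluation map on $\operatorname{Hom}_{\F_{q^6}}(\F_{q^6}^r,\F_{q^6})$. The spanning condition makes this map injective, so $\mathcal C_U$ has dimension $r$ over $\F_{q^6}$.

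As a check on the case split, and to make the corollary explicit, I would note which subspaces realise it. For $r\equiv0\pmod3$ one uses direct sums of \cite[Example~2.5]{CMPZ2021}. For $r=4$ one uses $U_b$ with $b$ good, which exists by \cref{thm:four}, giving the code $\mathcal C_b$. For $r$ coprime to $6$ one uses $\mathcal C_s$ from \cref{prop:isodual}. All other $r$ are covered by direct sums with copies of the three-dimensional example, as in the proof of \cref{thm:main}.

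There is essentially no obstacle, since all the work lies in \cref{thm:main}. The only point to verify is that the spanning hypothesis of \cref{prop:rank-code} holds. It is part of the definition of $2$-scattered, and in any case it follows from condition~(2) by the last assertion of that proposition.
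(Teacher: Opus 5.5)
Your proposal is correct and follows the paper's own argument: take the maximum $2$-scattered subspace supplied by \cref{thm:main} and apply \cref{prop:rank-code} to obtain the $[2r,r,4]_{q^6/q}$ MRD code $\mathcal C_U$. The spanning hypothesis is handled correctly, and the case breakdown you add is consistent with the proof of \cref{thm:main}.
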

\begin{proof}
Apply \cref{prop:rank-code} to the subspaces in
\cref{thm:main}.
\end{proof}

In particular, the constructions give $[8,4,4]_{q^6/q}$,
$[10,5,4]_{q^6/q}$ and $[14,7,4]_{q^6/q}$ MRD codes for
every $q$. The explicit equivalence in \eqref{eq:isodual}
applies to the latter two families.

\section{Open problems}\label{sec:conclusions}

The proof of \cref{thm:four} is not constructive: for general $q$ it
does not give an explicit $b$ such that $U_b$ is maximum
$2$-scattered. It would be interesting to find a simple sufficient
condition on $b$, or to describe the intersections of the sets
$\mathcal B_V$ more precisely.

The parameters with $\N_{q^{12}/q^4}(b)=-1$ were discarded in the
count. For $q=2$, the computation in \cref{app:q2} shows that
exactly $132$ of these $273$ parameters give maximum
$2$-scattered subspaces; for $q=3$, $4200$ of the $6643$ parameters
of norm $-1$ are good. It remains to characterise the good
parameters of norm $-1$ for general $q$.

We do not know how many of the subspaces $U_b$ are pairwise
inequivalent, nor whether, for $q$ an odd power of $2$, some of them
are equivalent to the subspaces of \cite{BGGM2025}. Invariants of the
associated codes, such as their generalised rank weights
\cite{MNT2023,BGM2026}, could be used to distinguish them.

For $r\in\{4,5\}$ the subspaces $U_b$ and $U_s$ are not direct sums of
maximum $2$-scattered subspaces of smaller dimensions, since neither
$4$ nor $5$ is a sum of integers greater than or equal to $3$.
For the family $U_s$, which depends only on the product
$\varepsilon s$, it would be interesting to determine when two
products give equivalent subspaces, and whether, for $r\ge7$, $U_s$
can be equivalent to a direct sum of maximum $2$-scattered subspaces
of smaller dimensions (for $r=7$, of dimensions $3$ and $4$).

\section*{Acknowledgements}
The authors thank the Italian
National Group for Algebraic and Geometric Structures and their
Applications (GNSAGA-INdAM) for its support. A. Neri is supported by the University of Naples Federico II through the FRA2024 Project CUP E63C26001530001  ``NHACT -- New Horizons in Algebraic Coding Theory''.

\section*{Data and code availability}
The Python code used for the computations is listed in
\cref{app:q2}.

\appendix
\section{The case \texorpdfstring{$q=2$}{q=2}}\label{app:q2}

Let
\[
 \F_{2^{12}}\cong\F_2[\omega]/(\omega^{12}+\omega^{11}+\cdots+\omega+1),\qquad
 b=\omega^{11}+\omega^{10}+\omega^8+\omega^7+\omega^6+\omega^5+\omega^4+\omega^2.
\]
The defining polynomial is $\Phi_{13}$, which is irreducible over
$\F_2$ because the multiplicative order of $2$ modulo $13$ is $12$.
The coefficient string of $b$, highest degree first, is
\texttt{110111110100}.

The verification below only uses arithmetic in $\F_{2^{12}}$ and
Gaussian elimination over $\F_2$. The integer $\sum_{i=0}^{11}c_i2^i$
represents $\sum_{i=0}^{11}c_i\omega^i$.
A basis of $W_0$ in this representation is
\[
 10,\ 36,\ 68,\ 151,\ 279,\ 514,\ 1040,\ 2071.
\]
Choose one representative of each of the $65$ cosets of $\F_{2^6}^*$ in
$\F_{2^{12}}^*$. Every hyperplane can be represented uniquely by a pair
$(\alpha,\beta)$ in which either $\alpha$ is one of these representatives
and $\beta\in \F_{2^{12}}$, or $\alpha=0$ and $\beta$ is one of them.
Multiplication of a pair by an element of $\F_{2^6}^*$ does not change
the rank of its map. Thus there are exactly
$65\cdot4096+65=266305$ ranks to check.

The resulting distribution of ranks is
\[
\begin{array}{c|rrr}
\dim_{\F_2}c_{\alpha,\beta}(W_0)&4&5&6\\\hline
\text{number of hyperplanes}&2635&57630&206040
\end{array}
\]
In particular, every rank is at least $4$, as required. These
numbers agree with the rank distribution of an MRD code of
$6\times8$ matrices over $\F_2$ with minimum rank distance $4$
\cite{Delsarte1978}, after division by $2^6-1$ to count nonzero
codewords up to $\F_{2^6}^*$-scaling.

Applying the same rank test to all $4096$ parameters gives exactly
$1056$ values of $b$ for which $U_b$ is maximum $2$-scattered:
$924$ parameters
satisfy $\N_{2^{12}/2^4}(b)\ne1$, and $132$ of the $273$
parameters with $\N_{2^{12}/2^4}(b)=b^{273}=1$ also work
(in characteristic two, these are the parameters of norm $-1$
excluded in \cref{thm:bad}).
To reduce the computation, observe
that the coordinatewise Frobenius map $(x,y)\mapsto(x^2,y^2)$
sends $U_b$ to $U_{b^2}$, since squaring preserves $W_0$.
This map preserves $2$-scatteredness, so it suffices to test one
representative of each of the $352$ Frobenius orbits in
$\F_{2^{12}}$.
These computations can be reproduced with the following Python script,
which uses no external packages. In the exhaustive search, a test
stops as soon as a rank below four is found.

\begin{lstlisting}[language=Python]
MODULUS = (1 << 13) - 1
ORDER = 1 << 12
B = int("110111110100", 2)

def mul(x, y):
    z = 0
    while y:
        if y & 1:
            z ^= x
        y >>= 1
        x <<= 1
        if x & ORDER:
            x ^= MODULUS
    return z

def power(x, n):
    y = 1
    while n:
        if n & 1:
            y = mul(y, x)
        x = mul(x, x)
        n >>= 1
    return y

def rank(vectors):
    pivots = [0] * 12
    r = 0
    for v in vectors:
        while v:
            j = v.bit_length() - 1
            if pivots[j]:
                v ^= pivots[j]
            else:
                pivots[j] = v
                r += 1
                break
    return r

assert pow(2, 12, 13) == 1
assert all(pow(2, d, 13) != 1 for d in (1, 2, 3, 4, 6))
fq6_elements = [x for x in range(ORDER) if power(x, 64) == x]
W = [x for x in range(ORDER)
     if x ^ power(x, 16) ^ power(x, 256) == 0]
assert len(fq6_elements) == 64 and len(W) == 256
basis = []
for x in W:
    if rank(basis + [x]) > len(basis):
        basis.append(x)
assert len(basis) == 8
trace = [x ^ power(x, 64) for x in range(ORDER)]
seen, reps = set(), []
for x in range(1, ORDER):
    if x not in seen:
        reps.append(x)
        seen.update(mul(x, e) for e in fq6_elements if e)
assert len(reps) == 65 and len(seen) == 4095
def rank_distribution(b, stop_on_bad=False):
    graph = [power(x, 2) ^ mul(b, power(x, 32)) for x in basis]
    odd = [[trace[mul(beta, y)] for y in graph]
           for beta in range(ORDER)]
    counts = {i: 0 for i in range(7)}
    for alpha in reps + [0]:
        even = [trace[mul(alpha, x)] for x in basis]
        betas = range(ORDER) if alpha else reps
        for beta in betas:
            r = rank([x ^ y for x, y in zip(even, odd[beta])])
            counts[r] += 1
            if stop_on_bad and r < 4:
                return counts
    return counts

counts = rank_distribution(B)
assert sum(counts.values()) == 266305
assert all(counts[i] == 0 for i in range(4))
assert counts[4] == 2635
assert counts[5] == 57630
assert counts[6] == 206040
print({r: n for r, n in counts.items() if n})

remaining = set(range(ORDER))
good, orbit_count = 0, 0
norm_one, good_norm_one = 0, 0
while remaining:
    b = min(remaining)
    orbit, c = set(), b
    while c not in orbit:
        orbit.add(c)
        c = power(c, 2)
    assert orbit <= remaining
    has_norm_one = power(b, 273) == 1
    if has_norm_one:
        norm_one += len(orbit)
    counts = rank_distribution(b, stop_on_bad=True)
    if all(counts[i] == 0 for i in range(4)):
        assert sum(counts.values()) == 266305
        good += len(orbit)
        if has_norm_one:
            good_norm_one += len(orbit)
    remaining -= orbit
    orbit_count += 1
assert orbit_count == 352 and norm_one == 273
assert good == 1056 and good_norm_one == 132
assert good - good_norm_one == 924
print("All parameters:", ORDER, "; good:", good)
print("Good with norm one:", good_norm_one)
print("Good with norm different from one:", good - good_norm_one)
\end{lstlisting}

\end{document}